\newtheorem{theorem}{Theorem}
\newtheorem{corollary}[theorem]{Corollary}
\newtheorem{lemma}[theorem]{Lemma}
\newtheorem{property}[theorem]{Property}
\newtheorem{problem}[theorem]{Problem}
\newtheorem*{claim}{Claim}
\newtheorem*{nonumbertheorem}{Theorem}
\theoremstyle{definition}
\newtheorem{definition}[theorem]{Definition}
\newtheorem{remark}[theorem]{Remark}
\newcommand{\integers}{\ensuremath{\mathds{Z}}} 
\newcommand{\naturals}{\ensuremath{\mathds{N}}} 
\newcommand{\cs}{,\;} 
\newcommand{\A}{\mathds{A}}
\newcommand{\B}{\mathds{B}}
\newcommand{\piB}{\pi_{1}(\B, u_{0})}
\newcommand{\piA}{\pi_{1}(\A, v_{0})}
\newcommand{\Hzx}{\mathds{B}_{0}^{(x)}} 
\newcommand{\Hx}{\mathds{B}^{(x)}}
\newcommand{\Hnx}[1]{\mathds{B}_{#1}^{(x)}}
\newcommand{\fx}{f^{(x)}}
\newcommand{\ex}{e^{(x)}}
\newcommand{\SC}[3]{\mathcal{SC}({#1},{#2},{#3})}
\newcommand{\tail}[2]{\mathrm{Tail}({#1})} 
\newcommand{\ustart}{u^{\prime}}
\newcommand{\uend}{u^{\prime\prime}}
\newcommand{\Path}[1]{<#1>}
\newcommand{\comment}[1]{}
\numberwithin{figure}{section}
\title{Effective coherence of groups discriminated by a locally quasi-convex hyperbolic group}
\author{Inna Bumagin\footnote{Carleton University, bumagin@math.carleton.ca. Supported by NSERC.}\phantom{i} and 
Jeremy Macdonald\footnote{Stevens Institute of Technology, jeremodm@gmail.com. Partially supported by NSERC.}
}
\begin{document}
\maketitle{}

\begin{abstract}
We prove that every finitely generated group $G$ discriminated by a locally quasi-convex torsion-free hyperbolic group $\Gamma$ is effectively coherent: 
that is, 
presentations for finitely generated subgroups can be computed from the subgroup generators.  We study $G$ via its embedding into an iterated centralizer 
extension of $\Gamma$, and prove that this embedding can be computed.  We also give algorithms to enumerate all finitely 
generated groups discriminated by $\Gamma$ and to decide whether a given group, with decidable word problem, is discriminated by $\Gamma$.
If $\Gamma$ may have torsion, we prove that groups obtained from $\Gamma$ by iterated amalgamated products with virtually abelian groups, over elementary 
subgroups, are effectively coherent.
\end{abstract}

\tableofcontents

\section{Introduction}
A group $G$ is called \emph{coherent} if every finitely generated subgroup is isomorphic to a finitely presented group.
When $G$ is known to be coherent, an algorithmic question immediately arises: can such a presentation be computed from the 
subgroup generators? We say that $G$ is \emph{effectively coherent} if we can describe an algorithm that, given a finite set $X\subset G$, produces 
a presentation for the subgroup $\langle X\rangle$.  
Effective coherence enhances the algorithmic study of subgroups: 
once a subgroup presentation is known one may apply other algorithms that deduce group-theoretic properties from the presentation, or 
solve problems involving several presentations (isomorphism, for example).


Several classes of groups are known to be effectively coherent, including free groups, limit groups, and locally quasi-convex hyperbolic groups. 
One may drop the assumption that $G$ is coherent and ask whether a subgroup presentation for $\langle X\rangle$ can be computed given the 
guarantee that a finite presentation exists.
Though this problem was shown in \cite{BW11} to be unsolvable in several major classes of groups, some non-coherent examples 
in which it is solvable, for example the direct product of free groups, are known (see \cite{BW11} for a summary).

Our main interest is in groups \emph{discriminated} by another group $\Gamma$.  A group $G$ is discriminated by $\Gamma$ (or is 
\emph{fully residually $\Gamma$}) if for every finite set $\{g_{1},\ldots,g_{n}\}$ of non-trivial elements of $G$ there exists 
a homomorphism $\phi:G\rightarrow \Gamma$ such that $\phi(g_{i})$ is non-trivial for $i=1,\ldots,n$.
When $\Gamma$ is a free group and $G$ is finitely generated, $G$ is called a \emph{limit group}.  Limit groups figured prominently in the
work of Kharlampovich and Miasnikov \cite{KM06} and Sela \cite{Sel06} on the solution to Tarski's problems on the elementary theory 
of free groups. Many equivalent characterizations of limit groups are known, and these characterizations extend beyond the case 
when $\Gamma$ is free (see \cite{KM10} for a summary).

In particular, when $\Gamma$ is torsion-free hyperbolic these characterizations hold.  But not all hyperbolic groups are coherent
(this follows from the well-known construction in \cite{Rip82}), and if $\Gamma$ is not coherent then 
there are non-coherent groups discriminated by $\Gamma$
(indeed, $\Gamma$ itself is an example). We will consider the case when $\Gamma$ is hyperbolic, locally 
quasi-convex, and torsion-free.  Such groups are known to be effectively coherent (see \cite{Gru99Angle} Prop.~6.1).
We prove that every finitely generated group $G$ which is discriminated by $\Gamma$ is also effectively coherent.  

Every such group $G$ is known to embed into a group obtained from $\Gamma$ by iterated centralizer extensions \cite{KM09}, and a partial result 
regarding the computation of this embedding, for the more general case when $\Gamma$ is hyperbolic and torsion-free, was given 
in \cite{KM13}.  We complete this result, giving an algorithm to compute the embedding provided $\Gamma$ is also locally quasi-convex. 
We also give an algorithm that enumerates all (finitely generated) groups discriminated by $\Gamma$, and an algorithm that recognizes 
whether a given group, with decidable word problem, is discriminated by $\Gamma$.  

For the case when $\Gamma$ has torsion, we are not aware of a similar embedding theorem for groups 
discriminated by $\Gamma$.  However, we replace centralizer extensions by amalgamated products with virtually abelian groups, amalgamated over 
elementary subgroups, and prove that groups obtained from $\Gamma$ in this way are effectively coherent.

Our principal results are summarized below.  In the case when $\Gamma$ is a free group, these results were 
obtained in \cite{KM98b} (embedding theorem), \cite{KM06} (effective coherence, see Thm. 30), and \cite{GW09} (effective coherence, enumeration, recognition).



\begin{nonumbertheorem}
Let $\Gamma$ be a hyperbolic group which is torsion-free and locally quasi-convex.  There are algorithms to solve each of the following problems.
\begin{enumerate}[(i)]
\item Given a finitely presented group $G$ known to be discriminated by $\Gamma$, compute a sequence of centralizer extensions of $\Gamma$, 
\[
\Gamma=G_{0}<G_{1}<\ldots<G_{n},
\]
and an embedding of $G$ into $G_{n}$ (Theorem~\ref{Thm:Embedding}).
\item Given a finitely presented group $G$ known to be discriminated by $\Gamma$ and a finite subset $X\subset G$, 
compute a presentation for the subgroup generated 
by $X$ (Theorem~\ref{Thm:EffectiveCoherence}).
\item Given a finitely presented group $G$ and a solution to the word problem in $G$, determine whether or not $G$ is discriminated by $\Gamma$ 
	(Theorem~\ref{Thm:Embedding}).
\item Enumerate, by presentations, all finitely generated groups discriminated by $\Gamma$, without repeating isomorphic groups 
	(Theorem~\ref{Thm:Enumeration}).
\end{enumerate}
In addition, the following problem is algorithmically solvable without the assumption that $\Gamma$ is torsion-free.
\begin{enumerate}[(i)]\setcounter{enumi}{4}
\item Given a sequence of groups \label{Prob:EffectiveCoherenceElementary}
\[
\Gamma=G_{0}<G_{1}<\ldots<G_{n}
\]
in which $G_{i+1}=G_{i}\ast_{E(g_{i})} V_{i}$ with $V_{i}$ virtually abelian and $E(g_{i})$ the maximal elementary subgroup containing the 
infinite-order hyperbolic element $g_{i}$, and a subset $X\subset G_{n}$, compute a presentation for the subgroup generated by $X$ 
(Theorem \ref{Thm:EffectiveCoherenceElementary}).
\end{enumerate}
\end{nonumbertheorem}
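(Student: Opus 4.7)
The plan is to treat the embedding result (i) as the cornerstone: once an algorithmic embedding $G \hookrightarrow G_n$ into an iterated centralizer extension is available, parts (ii), (iii), and (iv) reduce to effective coherence and algorithmic questions inside the tower $\Gamma=G_{0}<\cdots<G_{n}$. Part (v), which does not assume $\Gamma$ is torsion-free, I would handle separately by an induction along the amalgamation.

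For (i), the existence of the embedding is the theorem of Kharlampovich--Miasnikov \cite{KM09}, and \cite{KM13} provides a partial procedure in the torsion-free hyperbolic setting. To upgrade this to a full algorithm under the hypothesis of local quasi-convexity, I would dovetail two enumerations. The first runs through all candidate towers (specified by choices of $g_{i}\in G_{i}$ and ranks $k_{i}$) together with all candidate maps $\phi\colon G\to G_{n}$ defined on the given generators of $G$. For each candidate we must decide (a) that every relator of $G$ maps to $1$, using decidability of the word problem in $G_n$, which is inherited up the tower because each amalgamated centralizer $C_{G_{i}}(g_{i})$ is quasi-convex; and (b) that $\phi$ is injective. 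The main obstacle is (b): injectivity is not a directly decidable property. I would handle it by running in parallel a semi-procedure that searches for a witness $w\ne 1$ in $G$ (detected via the decidable word problem in $G$) with $\phi(w)=1$ in $G_n$. For the true embedding this search never terminates, and the existence guarantee from \cite{KM09} ensures that the outer enumeration will eventually lock onto a candidate for which an injectivity certificate (a suitable matching between normal forms on both sides) can be verified.

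Parts (ii)--(iv) then follow. For (ii), having computed $G \hookrightarrow G_n$, I transfer the input subset $X$ into $G_n$ and apply effective coherence of the centralizer-extension tower, proved inductively: $G_{0}=\Gamma$ is effectively coherent by \cite{Gru99Angle}, and each step is an amalgamated product over an abelian quasi-convex subgroup with a free abelian extension, for which an effective Bass--Serre / foldings argument carries coherence through. A presentation for $\langle X\rangle$ in $G_n$ then yields one in $G$. For (iii), I would dovetail two semi-procedures: the \emph{yes} branch runs (i), which terminates if $G$ is discriminated; the \emph{no} branch enumerates tuples of non-trivial elements $(g_{1},\ldots,g_{k})$ of $G$ and, using effective Makanin--Razborov-style diagrams over $\Gamma$, checks whether every homomorphism $G\to\Gamma$ kills some $g_{j}$, which witnesses non-discrimination. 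For (iv), enumerate finite presentations, filter by (iii), and solve the isomorphism problem among survivors using the embeddings produced by (i) as invariants in a JSJ-style comparison, in the spirit of \cite{GW09}.

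For (v), I would induct on $n$. The base case is effective coherence of $\Gamma$. At the inductive step, $G_{i+1}=G_{i}\ast_{E(g_{i})} V_{i}$ is an amalgamation over a virtually cyclic subgroup with a virtually abelian vertex group. Given a finite set $X\subset G_{i+1}$, the subgroup $\langle X\rangle$ acts on the Bass--Serre tree of the splitting, inducing a graph-of-groups decomposition whose edge groups are intersections $\langle X\rangle \cap E(g_{i})^{h}$ and whose vertex groups are intersections with conjugates of $G_{i}$ or $V_{i}$. Quasi-convexity of $E(g_{i})$ in $G_{i}$ (elementary subgroups of hyperbolic groups are quasi-convex), together with effective coherence of $V_{i}$ (virtually abelian), makes these intersections and their generators computable, and the inductive hypothesis yields presentations for the vertex groups in $G_{i}$. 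The main difficulty here is showing that the induced graph of groups for $\langle X\rangle$ is finite and effectively computable; this should follow from a Stallings-foldings construction adapted to amalgamations over virtually cyclic edges with virtually abelian vertex groups, combining quasi-convexity estimates in $\Gamma$ with the explicit abelian arithmetic in each $V_{i}$.
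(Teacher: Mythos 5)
The central gap is in part (i): injectivity detection. You enumerate candidate towers and candidate maps and then assert the loop will ``eventually lock onto a candidate for which an injectivity certificate (a suitable matching between normal forms on both sides) can be verified.'' No such certificate is described, and there is no general procedure to certify that a homomorphism of finitely presented groups is injective; the parallel search you run only detects \emph{non}-injectivity. This is precisely the obstacle left open in \cite{KM13}, which the paper cites as producing a finite list of candidate maps $\phi_i\colon G\to H_i$, at least one injective, but without a way to identify which. The paper's resolution is a genuinely different route: it first establishes effective coherence of the towers (part (v) and its torsion-free specialization), uses that to compute a finite presentation of each image $\phi_i(G)$, observes that each $\phi_i(G)$ is toral relatively hyperbolic, verifies that $G$ itself is toral relatively hyperbolic via Dahmani's semi-algorithm, decides $G\simeq \phi_i(G)$ with the Dahmani--Groves isomorphism algorithm, and then invokes Hopficity of groups discriminated by $\Gamma$ (Sela) to conclude that an isomorphism $G\simeq\phi_i(G)$ forces the surjection $\phi_i\colon G\twoheadrightarrow\phi_i(G)$ to be injective. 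The Hopficity step is what converts ``injectivity is undecidable'' into a terminating algorithm, and without it your part (i) does not close; since your (ii)--(iv) route through (i), they inherit the gap.

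For part (v) your outline is directionally consistent with the paper (Bass--Serre action, induced splitting of $\langle X\rangle$, foldings, quasi-convexity controlling edge and vertex intersections), but you flag the ``main difficulty'' and leave it unresolved. The paper identifies exactly which algorithmic property of the vertex groups makes the Kapovich--Weidmann--Miasnikov folding procedure effective at each level: decidability of the \emph{power coset membership problem} (given $x,g$ and a finitely generated $H$, decide whether $g^m\in xH$ for some $m\ne 0$), which is what the ``benign'' conditions reduce to when edge groups are (virtually) cyclic. Pushing this decidability up the tower is the technical core of the paper: the \textsc{ReadPower} algorithm, semi-canonical paths and adjustment sequences in folded subgroup graphs, and a double-coset finiteness property for elementary subgroups (proved via relative quasi-convexity in the electrified Cayley graph) that guarantees termination. ``Combining quasi-convexity estimates with explicit abelian arithmetic'' gestures at these ingredients but does not supply them, so the inductive step is not yet a proof.
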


Effective coherence in $G$ is the main result, and we prove this by first solving problem (\ref{Prob:EffectiveCoherenceElementary}), which 
gives effective coherence in centralizer extensions in the case when $\Gamma$ is torsion-free
(Theorem~\ref{Thm:EffectiveCoherenceExtensions}).  We view $G_{n}$ as the fundamental group of a graph of groups having 
two vertices, one with vertex 
group $G_{n-1}$ and the other with a (virtually) abelian vertex group, and one edge with edge group being  (virtually) $\integers$. 
A graph-folding algorithm, along with a structure called an $\A$-graph, was developed in \cite{KWM05} 
to find the induced decomposition of, and hence a presentation for, any finitely presented subgroup of the fundamental group of a graph of groups $\A$.  
While certain algorithmic properties are required of $\A$ (it should be `benign', see Definition \ref{Def:Benign}), when the only edge 
group is $\integers$ these properties reduce to the decidability of the following problem in vertex groups.

\begin{problem}
The \emph{power coset membership problem} for a group $G$ asks to decide, given two elements $x,g\in G$ and 
a finitely generated subgroup $H$ of $G$, whether or not there exists a non-zero integer $m$ such that $g^{m}\in xH$.
\end{problem}
Decidability of the power coset membership problem implies decidability of several other important algorithmic problems.  
On input $H=1$ and $g=1$ the answer is `Yes' if and only if $x=1$, so the \emph{word problem} 
is decidable in $G$.  On input $g=1$, the answer is `Yes' if and only if $x\in H$, so the \emph{membership problem} is decidable in $G$.

On input $x=1$, the answer is `Yes' if and only if $H\cap\langle g\rangle$ is non-trivial, and we call this the \emph{power membership problem}. 
Note that $m=0$ is specifically excluded in the problem specification so that this (non-trivial) problem arises when $x=1$. If the intersection is non-trivial, a 
generator for $H\cap \langle g\rangle$ may be produced by finding the smallest $m$ such that $g^{m}\in H$. Finally, we can decide if the intersection 
$xH \cap \langle g\rangle$ is non-empty, since this occurs if and only if either the answer to the power coset membership problem is `Yes' or $x\in H$.
 
To solve the power coset membership problem in $G_{n-1}$, we view $G_{n-1}$ itself as the fundamental group of a two-vertex graph of groups $\A$.
We construct a `folded $\A$-graph' $\Hx$ representing the coset $xH$: doing so requires that $\A$ be benign, and we argue inductively. 
To decide if $g^{m}\in xH$, we develop in \S 2 an algorithm 
\textsc{ReadPower} that determines whether or not a power of $g$ can be `read' in $\Hx$ (Theorem \ref{Thm:ReadingPowers}).  

While reading a fixed power of $g$ in $\Hx$ is straightforward, 
ensuring algorithm termination while trying to read an arbitrary power is difficult.  We provide some general, though somewhat involved, 
conditions under which \textsc{ReadPower} terminates (Property \ref{Property:PowerReadingConditions}), giving a solution to the power coset 
membership problem for certain graphs of groups (Theorem \ref{Thm:PowerCosetMembership}).  As an example, the power coset membership problem is 
decidable whenever all edge groups are finite and, necessarily, the problem is decidable in vertex groups (Corollary~\ref{Cor:FiniteEdgeGroups}).
The proof that $\A$ satisfies these conditions is given in \S 3, and we rely on  the local quasi-convexity of $\Gamma$ and 
local relative quasi-conevxity of $G_{n-1}$.

\section{Power coset membership in graphs of groups}\label{Section:GraphsOfGroups}
When a group $G$ is presented as the fundamental group of a graph of groups, every subgroup 
$H$ inherits from $G$ a graph of groups decomposition, which yields a presentation of $H$.
To compute this presentation,
and to solve the problem of membership in $H$, Kapovich, Weidmann, and Miasnikov developed in \cite{KWM05} a graph-folding algorithm similar to the folding procedure used by Stallings to 
study subgroups of free groups. We apply this technique to study the power coset membership problem in fundamental groups 
of certain graphs of groups.

\subsection{$\A$-graphs and subgroup graphs}\label{Subsec:Agraphs}
We recall from \cite{KWM05} the notion of an $\A$-graph and some important properties of $\A$-graphs.  All of the 
results of \S\ref{Subsec:Agraphs} are from \cite{KWM05}, to which we refer the reader for proofs. 

A \emph{graph} consists of a set $V$ called vertices, 
a set $E$ called edges,  an involution $\phantom{e}^{-1}: E\rightarrow E$ that has no fixed points,  and two functions 
$o:E\rightarrow V$ and $t:E\rightarrow V$ that satisfy $o(e)=t(e^{-1})$ for all 
$e\in E$.

A \emph{graph of groups} $\A$ consists of a graph $A$ together with, for each vertex $v\in V$ a group $A_{v}$, for each edge 
$e\in E$ a group $A_{e}$, and 
for each edge group $A_{e}$ two monomorphisms $\alpha_{e}:A_{e}\rightarrow A_{o(e)}$ and $\omega_{e}: A_{e}\rightarrow A_{t(e)}$.  For inverse edges $e^{-1}$ we
insist that $A_{e^{-1}}=A_{e}$, $\alpha_{e^{-1}}=\omega_{e}$.

An \emph{$\A$-path} $p=\Path{p_{0}, e_{1}, p_{1}, \ldots, e_{n},p_{n}}$ from vertex $v_{0}$ to $v_{n}$ consists of an underlying edge-path 
$(e_{1},\ldots,e_{n})$ from $v_{0}$ to 
$v_{n}$ and a choice of vertex group elements $p_{i}\in A_{t(e_{i})}$ for $i=1,\ldots,n$ and  $p_{0}\in A_{v_{0}}$.  The \emph{length} of $p$ is the number $n$ of edges.  

An \emph{elementary reduction} replaces a subpath 
of the form 
\[
\Path{a,e,\omega_{e}(c),e^{-1},b}, 
\]
where $c\in A_{e}$, with the subpath $\Path{a\alpha_{e}(c)b}$.   Elementary reductions and their inverses generate an equivalence relation 
on $\A$-paths, with the equivalence class of $p$ denoted $\overline{p}$.  A path is \emph{reduced} if no elementary reduction is applicable.

The set of equivalence classes of $\A$-loops based at 
a given vertex $v_{0}$ forms a group under concatenation and is called the \emph{fundamental group of the graph of groups $\A$} and is denoted $\piA$.

\subsubsection*{$\A$-graphs}
Let $\A$ be a graph of groups with underlying graph $A$ and base vertex $v_{0}$.  An $\A$-graph $\mathcal{B}$ consists of a graph $B$ together with the following additional data:
\begin{enumerate}[(i)]
\item a graph morphism $[\cdot]: B\rightarrow A$;
\item for each vertex $u$ of $B$ a subgroup $B_{u}\leq A_{[u]}$;
\item for each edge $f$ of $B$ two group elements $f_{\alpha}\in A_{[o(f)]}$ and $f_{\omega}\in A_{[t(f)]}$ such that $(f^{-1})_{\alpha}=(f_{\omega})^{-1}$.
\end{enumerate}
The $\A$-vertex $[u]$ is called the \emph{type} of a vertex $u\in B$, the $\A$-edge $[f]$ is the type the edge $f$ of $B$,
and we say that $f$ 
has the \emph{label} $(f_{\alpha}, [f], f_{\omega})$.  

The $\A$-graph $\mathcal{B}$ defines a graph of groups $\B$ as follows. Vertex groups are the groups 
$B_{u}$ above, for each edge $f$ the edge group $B_{f}$ is the subgroup of $A_{[f]}$ defined by
\[
B_{f} = \alpha_{[f]}^{-1}(f_{\alpha}^{-1} B_{o(f)} f_{\alpha})\cap \omega_{[f]}^{-1}(f_{\omega} B_{t(f)} f_{\omega}^{-1}),
\]
and the monomorphism $\alpha_{f}: B_{f}\rightarrow B_{o(f)}$ is defined by
\[
\alpha_{f}(g) = f_{\alpha} \alpha_{[f]}(g) f_{\alpha}^{-1}.
\]
Since $\omega_{f}=\alpha_{f^{-1}}$, we have that $\omega_{f}: B_{f}\rightarrow B_{t(f)}$ is given by  
\[
\omega_{f}(g) = f_{\omega}^{-1} \omega_{[f]}(g) f_{\omega}.
\]
Henceforth we will not distinguish between the $\A$-graph $\mathcal{B}$ and its associated graph of groups $\B$, referring to both as $\B$ and saying 
that `$\B$ is an $\A$-graph'.

To each $\B$-path $q=\Path{q_{0}, f_{1}, q_{1}, \ldots, f_{m}, q_{m}}$ from $u_{0}$ to $u_{1}$ we associate an $\A$-path $\mu(q)$ from $[u_{0}]$ 
to $[u_{1}]$ defined by
\[
\mu(q) = \Path{q_{0}(f_{1})_{\alpha}, [f_{1}], (f_{1})_{\omega} q_{1}  (f_{2})_{\alpha}, [f_{2}], \ldots, f_{m}, (f_{m})_{\omega}  q_{m}}.
\]
Note that if path $p$ ends at vertex $u$ and path $q$ begins at $u$, then for the concatenated path $pq$ we have 
\[
\mu(pq) = \mu(p)\mu(q).
\]
For two equivalent $\B$-paths $q$ and $q'$, their corresponding $\A$-paths $\mu(q)$ and $\mu(q')$ are equivalent.  Let 
\[
\overline{L(\B, u_{0}, u_{1})} = \{ \overline{\mu(q)} \; | \; \mbox{$q$ is a reduced $\B$-path from $u_{0}$ to $u_{1}$}\},
\]
where $\overline{\mu(q)}$ denotes the $\A$-equivalence class of $\mu(q)$.
If $u_{0}$ is a vertex of $\B$ such that $[u_{0}]=v_{0}$, then 
$\mu$ induces a homomorphism $\nu:\piB\rightarrow\piA$ whose image is precisely $\overline{L(\B, u_{0}, u_{0})}$.

We are interested in the case when the homomorphism $\nu$ is injective, which occurs when $\B$ is \emph{folded}.  
An $\A$-graph $\B$ is said to be \emph{not folded} if at least one of the following conditions holds.
\begin{enumerate}[(I)]
\item There exists a vertex $u$ and two distinct edges $f_{1}$ and $f_{2}$ with $u=t(f_{1})=o(f_{2})$ and $[f_{1}]=[f_{2}^{-1}]=e$, such that 
\[
(f_{1})_{\omega} b (f_{2})_{\alpha} = \omega_{e}(c)
\]
for some $b\in B_{u}$ and $c\in A_{e}$.
\item There is an edge $f$ such that 
\[
\alpha_{e}^{-1}(f_{\alpha}^{-1} B_{o(f)} f_{\alpha})\neq \omega_{e}^{-1}(f_{\omega} B_{t(f)}f_{\omega}^{-1}),
\]
where $e=[f]$.\label{Def:NotFoldedII}
\end{enumerate}
If neither holds, $\B$ is folded. Note in (I) that an edge $f$ and its inverse $f^{-1}$ are distinct edges.

\begin{lemma} Suppose $\B$ is folded.  Then for every reduced $\B$-path $q$ the $\A$-path $\mu(q)$ is reduced, hence the 
homomorphism $\nu: \piB\rightarrow \piA$ is injective.
\end{lemma}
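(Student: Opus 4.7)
The plan is to argue by contradiction.  Suppose $q=\langle q_{0},f_{1},q_{1},\ldots,f_{m},q_{m}\rangle$ is a reduced $\B$-path while $\mu(q)$ admits an elementary reduction at some position $i$.  The shape of an elementary reduction in $\A$ forces $[f_{i+1}]=[f_{i}]^{-1}=:e^{-1}$ together with
\[
(f_{i})_{\omega}\,q_{i}\,(f_{i+1})_{\alpha}=\omega_{e}(c)
\]
for some $c\in A_{e}$.  The strategy is then to split on whether $f_{i}$ and $f_{i+1}$ are inverse to one another as edges of $B$, and to extract a contradiction in each case from the folding hypothesis.

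In the first subcase, $f_{i+1}\ne f_{i}^{-1}$.  Setting $f_{1}':=f_{i}$, $f_{2}':=f_{i+1}$, $u:=t(f_{i})=o(f_{i+1})$, and $b:=q_{i}\in B_{u}$, one sees that $f_{1}',f_{2}'$ are two distinct edges of $B$ meeting at $u$ with $[f_{1}']=[(f_{2}')^{-1}]=e$, and the displayed equation is exactly the hypothesis of condition (I).  This contradicts the folding assumption directly.

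In the remaining subcase, $f_{i+1}=f_{i}^{-1}$; writing $f=f_{i}$ and using $(f_{i+1})_{\alpha}=f_{\omega}^{-1}$, the equation becomes $f_{\omega}\,q_{i}\,f_{\omega}^{-1}=\omega_{e}(c)$.  Since $q_{i}\in B_{t(f)}$, the right-hand side lies in $f_{\omega}B_{t(f)}f_{\omega}^{-1}$, so $c\in\omega_{e}^{-1}(f_{\omega}B_{t(f)}f_{\omega}^{-1})$.  The failure of condition (II) at $f$ (forced by $\B$ being folded) identifies this preimage with $B_{f}$, so in fact $c\in B_{f}$, whence $\omega_{f}(c)=f_{\omega}^{-1}\omega_{e}(c)f_{\omega}=q_{i}$.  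But then the subpath $\langle q_{i-1},f,q_{i},f^{-1},q_{i+1}\rangle$ is elementarily reducible inside $\B$, contradicting the assumption that $q$ is reduced.  Hence $\mu(q)$ must itself be reduced.

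Injectivity of $\nu$ then drops out: given $[\ell]\in\ker\nu$, replace $\ell$ by an equivalent reduced representative $\ell_{0}$.  By the first part, $\mu(\ell_{0})$ is a reduced $\A$-loop at $v_{0}$ equivalent to the trivial path, and uniqueness of reduced form in a fundamental group of a graph of groups forces $\mu(\ell_{0})$, and therefore $\ell_{0}$, to be trivial.  The main obstacle I expect is the second subcase above: one has to pin down that the element $c$ actually lives in the preimage appearing in condition (II) and then use folding to push it into $B_{f}$ itself, which is the single place where the full strength of (II)---rather than (I) alone---is required.
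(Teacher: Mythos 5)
The paper states this lemma without proof, deferring explicitly to \cite{KWM05} ("All of the results of \S2.1 are from [KWM05], to which we refer the reader for proofs"). So there is no paper proof to compare against, but your argument is correct and is the expected one. The dichotomy on whether $f_{i+1}=f_{i}^{-1}$ exactly sorts the two failure modes of foldedness: in the first case, the data $(u,f_{i},f_{i+1},q_{i},c)$ is verbatim an instance of condition (I), noting that $f_{i}\neq f_{i+1}$ automatically since $e\neq e^{-1}$; in the second case you correctly observe that condition (I) cannot be used (it would be vacuously triggered by $b=c=1$ whenever $f_{2}=f_{1}^{-1}$, so the nontrivial content there lives in condition (II)), and the chain $c\in\omega_{e}^{-1}(f_{\omega}B_{t(f)}f_{\omega}^{-1})=B_{f}$, hence $q_{i}=\omega_{f}(c)$, hence $q$ was $\B$-reducible, is exactly right.

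One minor point worth tightening in the injectivity wrap-up: reduced representatives of a class in $\pi_{1}(\A,v_{0})$ are not literally unique in general—they are unique only up to interleaved edge-group conjugations (the Bass--Serre normal form theorem). However, for the identity class the unique reduced representative really is the length-zero path $\langle 1\rangle$, and since $\mu$ preserves path length, $\mu(\ell_{0})=\langle 1\rangle$ forces $\ell_{0}$ to be a length-zero path $\langle q_{0}\rangle$ with $\mu(\ell_{0})=\langle q_{0}\rangle$, so $q_{0}=1$. The conclusion you draw is therefore correct; it just merits this more careful phrasing.
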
 

Starting with an $\A$-graph $\B'$ that is not folded, there are six \emph{folding moves} F1-F6 and three auxiliary moves A0-A2 that may be applied 
to eliminate instances of (I) and (II) while preserving the image 
of $\piB$ in $\piA$.  

The folding algorithm (Proposition~5.4 of \cite{KWM05}) consists of performing a sequence of folding moves (in any order) until a folded 
graph is obtained.
To carry out the folding moves effectively, and to ensure the existence of a terminating sequence, the following 
conditions are sufficient.

\begin{definition}\label{Def:Benign}
A finite connected graph of finitely generated groups $\mathds{A}$ is said to be \emph{benign} if all of the following conditions are satisfied.
\begin{enumerate}[(1)]
\item \label{BenignCoset} For each vertex $v$ and edge $e$ with $o(e)=v$ there is an algorithm that, given a finite set $X\subset A_{v}$ and an element $a\in A_{v}$ decides 
whether or not $\langle X\rangle \cap a\alpha_{e}(A_{e})$ is empty and if non-empty produces an element of this intersection.
\item \label{BenignNoetherian} Every edge group is Noetherian (i.e. all subgroups are finitely generated).
\item \label{BenignEdgeMembership} Every edge group has decidable uniform membership problem.  That is, there is an algorithm that, given a finite set $X\subset A_{e}$ and an element $a\in A_{e}$, 
decides whether or not $a\in \langle X\rangle$.
\item \label{BenignGenSet} For each vertex $v$ and edge $e$ with $o(e)=v$ there is an algorithm that, given a finite set $X\subset A_{v}$ computes a generating set 
for $\langle X\rangle \cap \alpha_{e}(A_{e})$.
\end{enumerate}
\end{definition}

The main theorem of \cite{KWM05} produces, from a set of subgroup generators, a folded $\A$-graph representing the subgroup and a presentation for the subgroup.

\begin{theorem}\label{Thm:Folding}
Let $\A$ be a benign graph of groups with base vertex $v_{0}$.  
\begin{enumerate}[(i)]
\item There is an algorithm that, given a finite subset $X$ of $\piA$, constructs a folded $\A$-graph $\B$ with base vertex $u_{0}$ such that 
$\overline{L(\B,u_{0},u_{0})}=\langle X\rangle$.  Each vertex group in $\B$ is described by a generating set of elements of vertex groups of $\A$.
\item If each vertex group of $\A$ is effectively coherent then $\piA$ is effectively coherent.
\end{enumerate}
\end{theorem}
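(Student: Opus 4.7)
The plan is to prove both parts by reducing everything to the folding algorithm described immediately before the theorem, using the benign hypothesis to execute and terminate it.

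For part (i), I would first construct an \emph{initial} $\A$-graph $\B'$ from the generating set $X = \{x_1, \ldots, x_k\}$. For each $x_i$, write $x_i$ as an $\A$-loop $\Path{p_0^{(i)}, e_1^{(i)}, p_1^{(i)}, \ldots, e_{n_i}^{(i)}, p_{n_i}^{(i)}}$ at $v_0$, which is possible because elements of $\piA$ are equivalence classes of such loops. Build $\B'$ as a wedge of $k$ edge-paths joined at a common base vertex $u_0$ of type $v_0$; the interior vertices have trivial vertex groups, the edge labels read off the loop data, and $B_{u_0}$ is initially trivial. By construction, $\overline{L(\B', u_0, u_0)} \supseteq \langle X\rangle$, and in fact one can check that the image of $\nu$ is exactly $\langle X\rangle$, since every element of the image can be written using loops $\mu(q)$ lifted from concatenations of the $x_i$. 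I then repeatedly apply the folding moves F1-F6 and the auxiliary moves A0-A2 in any order until the resulting $\A$-graph $\B$ is folded. Since each move preserves the image of $\nu$ (this is the content of the move-by-move invariance verified in \cite{KWM05}), we have $\overline{L(\B, u_0, u_0)} = \langle X\rangle$ at the end.

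The two nontrivial points are \emph{effectiveness} and \emph{termination} of the folding process, and this is where the benign hypothesis enters. Detecting instances of (I) requires deciding, at a vertex $u$ for two edges $f_1, f_2$ of appropriate types, whether $(f_1)_\omega B_u (f_2)_\alpha$ meets $\omega_e(A_e)$; this is an instance of the coset–intersection problem of Definition \ref{Def:Benign}(\ref{BenignCoset}), and a witness produced there lets us perform the fold. Detecting instances of (II) and computing the edge groups $B_f = \alpha_{[f]}^{-1}(f_\alpha^{-1} B_{o(f)} f_\alpha) \cap \omega_{[f]}^{-1}(f_\omega B_{t(f)} f_\omega^{-1})$ needed to carry out moves uses conditions (\ref{BenignEdgeMembership}) and (\ref{BenignGenSet}); in particular (\ref{BenignGenSet}) furnishes a generating set for intersections of subgroups with images of edge groups, and (\ref{BenignNoetherian}) guarantees these intersections are finitely generated so that the data of $\B$ remains finite throughout. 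The vertex groups of $\B$ are maintained as generating sets consisting of elements of vertex groups of $\A$, exactly as required by the statement.

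The main obstacle, as ever with folding-type arguments, is termination. Folds of type F1 (the analogue of Stallings folds at a vertex) can strictly enlarge a vertex group $B_u$ by adjoining a new element, so one needs a complexity measure that decreases under every move despite vertex groups potentially growing. Following the strategy of \cite{KWM05}, I would use a well-ordered complexity combining the number of edges of $B$ (which decreases under identifications) with, for each vertex $u$, the chain of edge-group intersections $\alpha_{[f]}^{-1}(f_\alpha^{-1} B_u f_\alpha)$ across incident edges $f$; the Noetherian hypothesis (\ref{BenignNoetherian}) ensures that these ascending chains inside the edge groups $A_{[f]}$ stabilize, so only finitely many growth-producing folds can occur before further reduction is purely combinatorial. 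Combined with the fact that auxiliary moves A0-A2 preserve or reduce the underlying graph while normalizing labels, this yields termination.

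For part (ii), assume each vertex group of $\A$ is effectively coherent and let $\langle X\rangle \leq \piA$ be given. Apply part (i) to produce a folded $\A$-graph $\B$ with $\overline{L(\B, u_0, u_0)} = \langle X\rangle$ and vertex groups $B_u$ given by generators inside $A_{[u]}$. Since $\B$ is folded, the induced map $\nu: \piB \to \piA$ is injective by the lemma above, so $\piB \cong \langle X\rangle$. I would then write the standard graph of groups presentation of $\piB$: for each vertex $u$ take a presentation of $B_u$, computed from the given generators by effective coherence of $A_{[u]}$; for each edge $f$ in a chosen spanning tree, impose the relation $\alpha_f(s) = \omega_f(s)$ for each generator $s$ of the edge group $B_f$; for each edge $f$ outside the spanning tree, introduce a stable letter $t_f$ with relations $t_f \alpha_f(s) t_f^{-1} = \omega_f(s)$. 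Generators of $B_f$ come from condition (\ref{BenignGenSet}) and their images $\alpha_f(s), \omega_f(s)$ are computable words in generators of $B_{o(f)}, B_{t(f)}$ using the conjugation formulas $\alpha_f(g) = f_\alpha \alpha_{[f]}(g) f_\alpha^{-1}$ and $\omega_f(g) = f_\omega^{-1} \omega_{[f]}(g) f_\omega$. This yields a finite presentation of $\langle X\rangle$, proving effective coherence of $\piA$.
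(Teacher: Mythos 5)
The paper does not prove Theorem~\ref{Thm:Folding}: it is stated as the main result of \cite{KWM05} and used as a black box, so there is no in-paper proof to compare against. Your sketch is a reasonable high-level reconstruction of the \cite{KWM05} argument and has the right skeleton: build an initial wedge graph from the generators, fold to completion using the benign conditions for effectiveness and the Noetherian hypothesis for termination, and read off the induced graph-of-groups presentation for part (ii).

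That said, several technical points in your sketch are compressed and would need attention in a complete proof. First, your initial graph takes $B_{u_0}$ to be trivial, which fails when some generator $x_i$ reduces to a length-$0$ loop, i.e.\ $x_i\in A_{v_0}$; such generators must be adjoined to $B_{u_0}$ directly, exactly as in the construction of $\Hzx$ in the proof of Theorem~\ref{Thm:CosetGraph}, where $B_{u_{0}} = \langle\, h\in X \;:\; |p_{h}|=0 \,\rangle$. As written, your $\overline{L(\B',u_0,u_0)}\supseteq\langle X\rangle$ claim is false in this degenerate case. Second, detecting an instance of condition (I) at a vertex $u$ asks whether $(f_1)_\omega B_u (f_2)_\alpha$ meets $\omega_e(A_e)$, which is a double-coset question; it becomes the single-coset intersection of Definition~\ref{Def:Benign}(\ref{BenignCoset}) only after an A0 conjugation of $B_u$ normalizes one of the edge labels, and your reduction asserts the equivalence without that normalizing step. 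Third, in part (ii) the edge relations $\alpha_f(s)=\omega_f(s)$ must be written as words in the \emph{computed generating sets} of $B_{o(f)}$ and $B_{t(f)}$, not merely recognized as elements of the ambient vertex groups; effective coherence of $A_{o([f])}$ gives a presentation of $B_{o(f)}$ but does not by itself furnish a constructive membership witness, so one must either track generator expressions through the folding or appeal to the benign conditions more carefully. None of these is a conceptual error, but each is a place where \cite{KWM05} carries out work that your sketch glosses over.
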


\subsection{Cosets of a subgroup}\label{Section:Cosets}

In order to solve the power coset membership problem in $\piA$, 
we will construct a folded graph $\Hx$ similar to the graph $\B$ constructed in Theorem~\ref{Thm:Folding}.  While in $\B$ elements of 
the subgroup $H=\langle X\rangle$ are represented by loops at 
a vertex $u_{0}$, 
elements of $xH$ are represented by paths in $\Hx$ from a distinguished vertex $u_{x}$ to $u_{0}$.  The graph $\Hx$ is not strictly necessary to solve 
power coset membership: 
to decide if $p^{n}\in xH$, we may instead check if $x^{-1}p^{n}$ is represented by a loop in $\mathds{B}$. However, this requires that 
the path $x^{-1}p^{n}$ be reduced, and since the reduced path depends on $n$, this introduces complications. 
The use of $\Hx$ provides a more elegant solution.

\begin{theorem}\label{Thm:CosetGraph}
Let $G=\piA$, where $\A$ is a benign graph of groups.  There is an algorithm that, given a finitely generated subgroup 
$H\leq G$ and $x\in G$, produces either
\begin{enumerate}[(1)]
\item an element $y\in G$ with path length 0 such that $yH=xH$, if such $y$ exists, or
\item a folded $\A$-graph $\Hx$ with distinguished vertices $u_{x}$ and $u_{0}$ such that 
\[
\overline{L(\Hx, u_{x}, u_{0})}=xH.
\]
\end{enumerate}
The analogous result for right cosets holds.
\end{theorem}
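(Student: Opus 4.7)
\textbf{Proof plan for Theorem~\ref{Thm:CosetGraph}.}  The idea is to mimic the folding construction of Theorem~\ref{Thm:Folding}, but start from an $\A$-graph that encodes the coset $xH$ rather than the subgroup $H$ alone.  First, apply Theorem~\ref{Thm:Folding} to the finite generating set of $H$ to obtain a folded $\A$-graph $\B$ with base vertex $u_{0}$ satisfying $\overline{L(\B,u_{0},u_{0})}=H$.  Next, using a normal-form procedure for $\piA$ (which is available because $\A$ is benign and hence has decidable word problem in vertex and edge groups together with edge-membership and intersection algorithms), write $x$ as a reduced $\A$-loop at $v_{0}$,
\[
x=\Path{x_{0}, e_{1}, x_{1}, \ldots, e_{n}, x_{n}}.
\]
If $n=0$ then $x=x_{0}\in A_{v_{0}}$ and we output $y=x_{0}$, which is the desired length-$0$ representative.

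Assume $n\geq 1$.  Build an initial $\A$-graph $\Hzx$ by adjoining to $\B$ a fresh chain of vertices $u_{x}=w_{0},w_{1},\ldots,w_{n-1}$, identifying $w_{n}$ with $u_{0}$, and inserting new edges $f_{1},\ldots,f_{n}$ so that $[w_{i}]$ matches the appropriate endpoint of $e_{i}$, each $[f_{i}]=e_{i}$, and the labels $((f_{i})_{\alpha},[f_{i}],(f_{i})_{\omega})$ together with the vertex-group elements $x_{i}$ (placed at $w_{i}$, trivial vertex groups $B_{w_{i}}=\{1\}$ for $i<n$) satisfy
\[
\mu\bigl(\Path{x_{0},f_{1},x_{1},\ldots,f_{n},x_{n}}\bigr) \;=\; x.
\]
Because $u_{x}$ has valence one and the intermediate vertex groups are trivial, every reduced $\Hzx$-path from $u_{x}$ to $u_{0}$ must traverse $f_{1}\cdots f_{n}$ and then continue by a reduced $\B$-loop at $u_{0}$, so $\overline{L(\Hzx,u_{x},u_{0})}=xH$.

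Now run the folding algorithm of \cite{KWM05} on $\Hzx$, treating both $u_{x}$ and $u_{0}$ as distinguished vertices that persist through the moves F1--F6 and A0--A2 (tracking which vertex in the current graph each distinguished label names).  Termination and effectiveness of the moves are guaranteed by benignness of $\A$, and each move preserves the image language; in particular, for the pair $(u_{x},u_{0})$ the set $\overline{L(\cdot,u_{x},u_{0})}$ is invariant under folding, so it remains equal to $xH$ throughout.  When the process halts with a folded $\A$-graph $\Hx$, two cases arise.  If $u_{x}$ and $u_{0}$ have not been identified, we are in case (2) and output $\Hx$.  If they have been identified, then after the identification the path from $u_{x}$ to $u_{0}$ is represented by a single element $y\in A_{[u_{0}]}=A_{v_{0}}$, and $y\in xH$ (since $\overline{L(\Hx,u_{x},u_{0})}=xH$ and this language now contains the length-$0$ element $y$); we output $y$, which gives case (1).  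Conversely, case (1) can only occur when such an identification takes place, because if $xH$ contains some $y\in A_{v_{0}}$ then $y^{-1}x\in H$ and folding will eventually absorb the attached chain into $\B$ at $u_{0}$.

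\textbf{Main obstacle.}  The delicate point is the bookkeeping of the distinguished vertices through folding.  The moves F1--F6 of \cite{KWM05} are stated for an $\A$-graph with a single base vertex; I would need to verify that each move either leaves the pair $(u_{x},u_{0})$ alone or identifies them cleanly, and that when such an identification happens the resulting vertex-group element $y\in A_{v_{0}}$ is extractable from the sequence of moves.  Everything else is a routine adaptation of Theorem~\ref{Thm:Folding}, so the heart of the argument is checking that the folding framework is genuinely two-pointed and that the invariant $\overline{L(\cdot,u_{x},u_{0})}=xH$ is preserved by every move.
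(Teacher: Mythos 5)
Your construction of $\Hzx$ (pre-fold $\B$ for $H$, then attach a line of trivial-vertex-group edges spelling $x$) is a reasonable variant of what the paper does, and the overall plan of running the folding algorithm and watching the pair $(u_{x},u_{0})$ is exactly right. You have also correctly located the crux of the argument: the behaviour of the distinguished vertices under the folding moves. However, your treatment of that crux has two genuine gaps.

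First, the issue with the auxiliary move A0 is not a matter of ``tracking which vertex in the current graph each distinguished label names.'' A0 at a vertex $u$, with conjugator $g$, leaves $\mu(q)$ unchanged for paths passing \emph{through} $u$, but if $u$ is an \emph{endpoint} of the path it alters the image: applying A0 at $u_{x}$ replaces $\overline{L(\cdot,u_{x},u_{0})}$ by $g\,xH$, and applying it at $u_{0}$ replaces it by $xH g^{-1}$ (and replaces $\overline{L(\cdot,u_{0},u_{0})}$ by $gHg^{-1}$). So the invariant $\overline{L(\cdot,u_{x},u_{0})}=xH$ is \emph{not} automatically preserved by the moves F1 and F2, which internally invoke A0; \cite{KWM05} already undoes the A0 conjugation when it hits the base vertex $u_{0}$, but it has no reason to protect $u_{x}$. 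The paper has to \emph{modify} the folding moves: in F1, choose to apply A0 at the vertex among $t(f_{1}),t(f_{2})$ that is neither $u_{x}$ nor $u_{0}$ (one such always exists once you decide not to perform moves that identify $u_{x}$ with $u_{0}$); in F2, when $t(f_{2})\in\{u_{x},u_{0}\}$, cancel the conjugation afterwards by applying A0 with $g^{-1}$. Without these changes the coset invariant can drift, and the folded $\Hx$ need not satisfy $\overline{L(\Hx,u_{x},u_{0})}=xH$.

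Second, your extraction of $y$ in the identification case is unjustified. If you actually \emph{perform} a move that identifies $u_{x}$ with $u_{0}$, then in the new graph $u_{x}=u_{0}$, so $\overline{L(\cdot,u_{x},u_{0})}=\overline{L(\cdot,u_{0},u_{0})}$, which is preserved as $H$; the set $xH$ is no longer the natural invariant, and there is no ``single element $y$'' canonically sitting there. The paper instead \emph{halts before} performing the offending move. At that point the graph $\Hnx{i}$ still has $u_{x}\neq u_{0}$ and still satisfies the invariant, and the applicability of the move (of type F1 or F2 on $(\fx_{1})^{-1}$ and some $f$ with $t(f)=u_{0}$) gives $b\in B_{u},\,c\in A_{e}$ with $b_{1}ba_{2}=\omega_{e}(c)$. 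The explicit path $p=\Path{1,\fx_{1},b,f,1}$ from $u_{x}$ to $u_{0}$ then has $\mu(p)$ equivalent to the length-$0$ $\A$-path $y=a_{1}\alpha_{e}(c)b_{2}\in A_{v_{0}}$, and since $\overline{\mu(p)}\in xH$ we get $yH=xH$. This also shows $y$ need not equal $1$ (and $x$ need not lie in $H$), which is inconsistent with the picture your post-identification argument implicitly relies on. Your instinct that ``$y$ is extractable from the sequence of moves'' is correct, but it requires this pre-move analysis, not a reading of the language on the already-collapsed graph.

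Your variant of starting from a folded $\B$ rather than from the wedge $\mathcal{B}_{0}$ of \cite{KWM05} is harmless and, once the two issues above are repaired, gives an equivalent proof.
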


\begin{proof}
Let $X$ be the given generating set of $H$.
Construct an $\A$-graph $\Hzx$ as follows.  Begin with a base vertex $u_{0}$. 
We may assume that each generator $h\in X$ is given as reduced $\A$-path
\[
p_{h} = \Path{h_{0}, e_{1}, h_{1}, \ldots, e_{k}, h_{k}}
\] 
from $v_{0}$ to $v_{0}$ (Property (\ref{BenignCoset}) of the definition of benign implies that we may compute a reduced representative for 
any non-reduced path).
If $k>0$, attach at $u_{0}$ a loop with $k$ edges having labels  
\[
(h_{0}, e_{1}, 1), (h_{1}, e_{2}, 1), \ldots, (h_{k-2}, e_{k-1}, 1), (h_{k-1}, e_{k}, h_{k}).
\]
These labels define the types of the vertices along this path, i.e. the first has type $o(e_{1})=v_{0}$, the second has type $o(e_{2})$, and so on.
Each vertex group along this path, except for $B_{u_{0}}$, is set to be trivial.  Set $B_{u_{0}}$ to be 
\[
B_{u_{0}} = \langle  h\in X \; | \; |p_{h}|=0 \rangle\leq A_{v_{0}}.
\]
Denote this $\A$-graph by $\Hzx$ and note that it is precisely the $\A$-graph $\mathcal{B}_{0}$ defined 
in Definition~5.1 of \cite{KWM05}.  

Now let $x$ be represented by the reduced $\A$-path 
\[
p_{x} = \Path{x_{0}, \ex_{1}, x_{1}, \ldots, \ex_{l}, x_{l}}
\]
from $v_{0}$ to $v_{0}$.  Construct a `line' of edges $\fx_{1}, \fx_{2},\ldots, \fx_{l}$, with $t(\fx_{i})=o(\fx_{i+1})$ for $i=1,\ldots,l-1$, 
having labels 
\[
(x_{0}, \ex_{1}, 1), (x_{1}, \ex_{2}, 1), \ldots, (x_{l-2}, \ex_{l-1}, 1), (x_{l-1}, \ex_{l}, x_{l}),
\]
with vertex types assigned according these labels as above. 
Attach this line to $\Hzx$ by setting $t(\fx_{l})=u_{0}$.  Denote $o(\fx_{1})=u_{x}$, the initial vertex of the first edge, 
and note that it is of type $v_{0}$.
Each vertex group along the path is set to be trivial (except $B_{u_{0}}$, which has been assigned above). 
We have now constructed $\Hzx$.

\begin{lemma}\label{Lem:StartGraph}
Let $\Hzx$ be as above.  Then 
\[
\overline{L(\Hzx, u_{0}, u_{0})} = H \;\mbox{  and  }\; \overline{L(\Hzx, u_{x}, u_{0})}=xH.
\]
\end{lemma}
\begin{proof}
Since the vertex groups along the line $\fx_{1},\ldots,\fx_{l}$ are all trivial, a reduced path from $u_{0}$ to $u_{0}$ cannot contain any of these edges. 
Hence the first statement is immediate (cf. Lemma~5.3 of \cite{KWM05}).

For the second statement, let $p$ be any reduced path from $u_{x}$ to $u_{0}$.  Since all the vertex groups along the $\fx_{1},\ldots,\fx_{l}$ branch are trivial, 
$p$ can be written as a concatenation of paths $p=qp'$ where  
\begin{equation}\label{Eqn:xpath}
q = \Path{1, \fx_{1}, 1, \fx_{2}, \ldots, 1, \fx_{l}, 1}
\end{equation}
and $p'$ is a reduced path from $u_{0}$ to $u_{0}$.  Clearly $\overline{\mu(q)}=x$, and $\overline{\mu(p')}=h\in H$ by the first statement, 
so $\overline{\mu(p)}=xh\in xH$, hence $\overline{L(\Hzx, u_{x}, u_{0})}\subset xH$.  Conversely, each $xh$ can be represented by a path 
$qp'$ as above and hence $\ xH\subset \overline{L(\Hzx, u_{x}, u_{0})}$.
\end{proof}

We now apply folding moves to $\Hzx$, in any order, producing a sequence of $\A$-graphs 
\begin{equation}\label{Eqn:FoldingSequence}
\Hnx{0}\cs \Hnx{1}\cs\ldots
\end{equation} 
We adopt the convention that the images in every $\Hnx{i}$ of edges $f$ and vertices $u$ from $\Hzx$ continue to be denoted 
by the same letters $f$, $u$. As well, $\mu$ will denote the map from $\Hnx{i}$-paths to $\A$-paths, for all values of $i$.

If at any point in the sequence the next move will identify $u_{x}$ with $u_{0}$, we stop and do not perform this move (the algorithm will 
output $y$ in this case, as we will see below).  We assume then that $u_{x}$ and $u_{0}$ are distinct in every $\Hnx{i}$.
We also make two small changes to the folding moves, regarding the use of auxiliary move A0.  
Recall from \cite{KWM05} that to apply A0 at a vertex $B_{u}$ we take an element $g\in A_{[u]}$ and replace 
\begin{enumerate}[(i)]
\item $B_{u}$ by $g B_{u}g^{-1}$,
\item $f_{\alpha}$ by $g f_{\alpha}$ for edges $f$ with $o(f)=u$, and 
\item $f_{\omega}$ by $f_{\omega}g^{-1}$ for edges $f$ with $t(f)=u$.
\end{enumerate}
Move A0 is only used in folding moves F1 and F2.

In F1, two distinct non-loop edges $f_{1}$ and $f_{2}$ with $o(f_{1})=o(f_{2})$ and $t(f_{1})\neq t(f_{2})$ are folded together.  
At the beginning of this move, A0 must be applied at one of the two vertices $t(f_{1})$ or $t(f_{2})$ (we may choose which) in order 
to obtain identical edge labels on $f_{1}$ and $f_{2}$.
Since no folding move in the sequence (\ref{Eqn:FoldingSequence}) identifies $u_{x}$ with $u_{0}$, at least one of the vertices 
$t(f_{1})$ or 
$t(f_{2})$ is neither $u_{x}$ nor 
$u_{0}$.  We choose to apply A0 at this vertex.
  
In F2, a non-loop edge $f_{2}$ is folded onto a loop edge $f_{1}$, where $o(f_{1})=o(f_{2})$.  Auxiliary move A0 must be applied at $t(f_{2})$, with an element 
$g\in A_{[t(f_{1})]}$, in order to equalize the edge labels on $f_{1}$ and $f_{2}$.  If $t(f_{2})$ is equal to $u_{x}$ or $u_{0}$, we will, 
at the conclusion of the folding move,  
apply A0 with the element $g^{-1}$ at the vertex $t(f_{2})$ (which now coincides with $t(f_{1})$).  Note that in the description of F2 in \cite{KWM05}, 
this is done only when $t(f_{2})=u_{0}$.  

It follows from \cite{KWM05} Prop.~4.15 that $\overline{L(\Hnx{i}, u_{0}, u_{0})}=H$ for all $i$.
We claim that 
\[
\overline{L(\Hnx{i}, u_{x}, u_{0})}=xH
\]
for every $i$.
From the description of the folding moves, and the considerations regarding A0 above, it follows that 
for every path $p$ in $\Hzx$ from $u_{x}$ to $u_{0}$ there exists, in each $\Hnx{i}$, a path $p'$ from 
$u_{x}$ to $u_{0}$ such that $\overline{\mu(p')}=\overline{\mu(p)}$.  Hence $\overline{L(\Hnx{i}, u_{x}, u_{0})}\supseteq xH$.
In particular, for the path $q$ which represents $x$ in $\Hzx$, there exists a path $q'$ in $\Hnx{i}$ from $u_{x}$ to $u_{0}$ such 
that $\overline{\mu(q')}=x$.
For the opposite inclusion, 
let $p$ be any path from $u_{x}$ to $u_{0}$ in $\Hnx{i}$.  Since $(q')^{-1}p$ is a loop based at $u_{0}$, 
we have $\overline{\mu((q')^{-1}p)}=h\in H$ and so 
\[
\overline{\mu(p)} = \overline{\mu(q' (q')^{-1} p)} = \overline{\mu(q')}\phantom{\cdot} \overline{\mu((q')^{-1}p)} = xh,
\]
proving the claim.

Now if at no point in the folding sequence is there a move that would identify $u_{x}$ with $u_{0}$, the folding algorithm 
terminates producing the folded graph $\Hx$ as required (see the proof of Theorem~5.8 of \cite{KWM05}).  Assume then 
that for the graph $\Hnx{i}$ there is an applicable folding move which would identify $u_{x}$ with $u_{0}$.

\begin{figure}
\begin{center}
\includegraphics[scale=0.8]{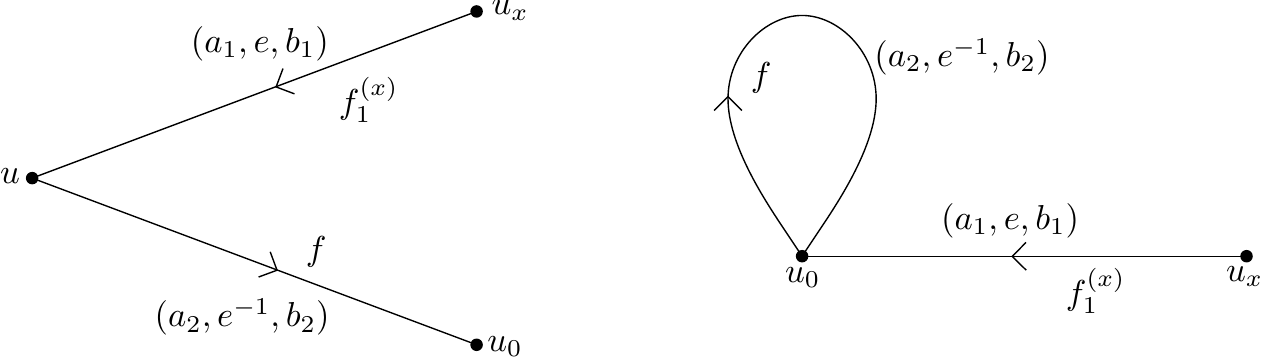}
\caption{Setup for an F1 fold (left) or an F2 fold (right) which would identify $u_{x}$ with $u_{0}$.}
\label{Fig:F1F2fold}
\end{center}
\end{figure}

This move must be of type F1 or F2, involving edges $(\fx_{1})^{-1}$ and an edge $f$ with 
$o(f)=o((\fx_{1})^{-1})$ and $t(f)=u_{0}$. Two possibilities 
are shown in Figure~\ref{Fig:F1F2fold} (the third possibility has $\fx_{1}$ as the loop edge).
Let $e=\ex_{1}$, $u=o(f)$, and let $\fx_{1}$ have label $(a_{1}, e, b_{1})$ and $f$ have label $(a_{2}, e^{-1}, b_{2})$.  Since 
$\Hnx{i}$ is not folded, there exist $b\in B_{u}$ and $c\in A_{e}$ such that 
\[
b_{1}b a_{2} = \omega_{e}(c).
\]
Then 
\[
p = \Path{1, \fx_{1}, b, f, 1}
\]
is a path in $\Hnx{i}$ from $u_{x}$ to $u_{0}$ hence $\overline{\mu(p)}=xh$ for some $h\in H$.  But 
\[
\mu(p) = \Path{a_{1}, e, b_{1} b a_{2}, e^{-1}, b_{2}} = \Path{a_{1}, e, \omega_{e}(c), e^{-1}, b_{2}} 
\]
which is equivalent to the length 0 path $y=a_{1} \alpha_{e}(c)b_{2}\in A_{u_{0}}$. Hence $xH=yH$, and the algorithm returns 
the element $y$.

\end{proof}


\subsection{Semi-canonical forms for paths in certain $\A$-graphs}\label{Section:SemiCanonicalForms}

Let $\B$ be a folded $\A$-graph and let 
\[
p = \Path{p_{0}, e_{1}, p_{1}, e_{1}, \ldots, p_{n-1}, e_{n}, p_{n}}
\]
be an $\A$-path.  If there exists a $\B$-path $q$ such that $\overline{\mu(q)}=\overline{p}$, then there will (usually) be infinitely many 
equivalent such paths.  We will define a semi-canonical form for these paths, which will depend on the choice of representative $p$ for 
the class $\overline{p}$.  
While semi-canonical forms are not required to solve the problem of 
determining the existence of $q$, they are essential in solving the more difficult problem of whether or not there exists $q$ and $m\neq 0$ such that 
$\overline{\mu(q)}=\overline{p^{m}}$ (discussed in the next section).

We place the following restriction on $\B$:
assume that every edge group 
$B_{f}$ of $\B$ which is infinite has finite index in the corresponding edge group $A_{e}$ of $\A$.
This is satisfied, for example, when all edge groups of $\A$ are virtually cyclic.  
For each infinite $B_{f}$, fix a (finite) set $R_{f}$ of left coset representatives of $B_{f}$ in $A_{e}$.

Our definition of semi-canonical forms corresponds to the following procedure for searching for a path 
$q=\Path{q_{0}, f, \ldots}$ such that $\overline{\mu(q)}=\overline{p}$.  We start searching at a vertex $\ustart$ such that 
$[\ustart]=o(e_{1})$. We locate an edge $f$ of type $e_{1}$ incident on $\ustart$.  
Since the map $\mu$ will multiply $q_{0}$ on the right by $f_{\alpha}$, and the target element $p_{0}$ may be replaced by 
$p_{0}\alpha_{e_{1}}(c)$ for any $c\in A_{e_{1}}$, the element  
$p_{0}\alpha_{e_{1}}(c)f_{\alpha}^{-1}$ must be in the vertex group 
$B_{\ustart}$ for some $c \in A_{e_{1}}$ for $q$ to exist.  
Having selected $q_{0}=p_{0}\alpha_{e_{1}}(c)f_{\alpha}^{-1}$, we proceed to the next vertex group, but 
with $p_{1}$ replaced by $\omega_{e_{1}}(c^{-1})p_{1}$.  Though there may be infinitely many choices for the `adjustment' $c$, we will show 
that only finitely many `canonical' choices need to be considered.  Along the next edge we may have another adjustment $c'$, and so produce 
a sequence of adjustments, each depending on the previous.

\subsubsection*{Canonical adjustments}

Let $f$ be an edge of $\B$ and denote $[f]=e$ and $u=o(f)$.  
For an element $c\in B_{f}$, we define the \emph{left adjustment} corresponding to $c$ 
as 
\begin{equation}\label{Eqn:LeftAdjustment}
l_{c} = \alpha_{e}(c)f_{\alpha}^{-1}
\end{equation}
and the \emph{right adjustment} corresponding to $c$ as 
\begin{equation}\label{Eqn:RightAdjustment}
r_{c} = f_{\omega}^{-1} \omega_{e}(c^{-1}).
\end{equation}
Note that while $l_{c}$ and $r_{c}$ depend on both $c$ and $f$, we will 
assume that the edge groups of $\A$ are pairwise disjoint, making $f$ uniquely determined by $c$.

For an element $a\in A_{o(e)}$ define a set $C(f,a)\subseteq A_{e}$ called the \emph{canonical adjustment set} as 
follows.  If there is no element $c_{0}\in A_{e}$ such that $al_{c_{0}}\in B_{u}$, then $C(f,a)$ is empty.  If such an element $c_{0}$ 
does exist, then 
\begin{enumerate}[(i)]
\item if $B_{f}$ is finite, \label{Eqn:C2}
\[
C(f,a) = c_{0}B_{f};
\]
\item if $B_{f}$ is infinite, 
\[
C(f,a) = \{c \in R_{f}\;|\; al_{c}\in B_{u}\}.
\]

\end{enumerate}
In the case when $B_{f}$ is finite, the set $C(f,a)$ does not depend on the choice of $c_{0}$.  
Indeed, suppose we replace $c_{0}$ by another element $c_{0}'\in A_{e}$ such that 
$al_{c_{0}'}\in B_{u}$.  Then $(al_{c_{0}})^{-1}al_{c_{0}'}\in B_{u}$, and  
\[
(al_{c_{0}})^{-1}al_{c_{0}'} = 
\left(f_{\alpha}\alpha_{e}(c_{0}^{-1})a^{-1}\right)\left( a\alpha_{e}(c_{0}')f_{\alpha}^{-1}\right)
	= f_{\alpha}\alpha_{e}(c_{0}^{-1}c_{0}')f_{\alpha}^{-1},
\]
hence $\alpha_{e}(c_{0}^{-1}c_{0}')\in B_{u}^{f_{\alpha}}$.  Since $\B$ is 
folded, the edge group $B_{f}$ is equal to $\alpha_{e}^{-1}(B_{u}^{f_{\alpha}})$, hence 
$c_{0}^{-1}c_{0}'\in B_{f}$ because $\alpha_{e}$ is injective and the cosets coincide. 
\label{Page:Cfa}
\begin{lemma}\label{Lem:Cfp}
Every element $c\in C(f,a)$ satisfies $al_{c}\in B_{u}$, and $C(f,a)$ is non-empty if and only if there exists $c\in A_{e}$ such 
that $al_{c}\in B_{u}$.
\end{lemma}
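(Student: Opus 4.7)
The plan is to reduce both statements to a single algebraic identity describing how $al_{c}$ transforms under right multiplication of $c$ by elements of $B_{f}$. Specifically, for any $c \in A_{e}$ and $b \in B_{f}$, unpacking the definition (\ref{Eqn:LeftAdjustment}) gives
\[
al_{cb} = a\alpha_{e}(c)\alpha_{e}(b)f_{\alpha}^{-1} = al_{c}\cdot f_{\alpha}\alpha_{e}(b)f_{\alpha}^{-1} = al_{c}\cdot\alpha_{f}(b).
\]
Since $\alpha_{f}(B_{f}) \subseteq B_{u}$, this shows that the condition $al_{c} \in B_{u}$ depends only on the coset $cB_{f}$, which is the only algebraic fact needed in what follows.

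With this identity in hand, the first assertion is immediate. When $B_{f}$ is infinite, membership $al_{c} \in B_{u}$ is built into the very definition of $C(f,a)$. When $B_{f}$ is finite, $C(f,a) = c_{0}B_{f}$, so any $c$ in this set has the form $c_{0}b$ with $b \in B_{f}$, and the identity gives $al_{c} = al_{c_{0}}\cdot\alpha_{f}(b) \in B_{u}$ since both factors lie in $B_{u}$.

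For the second assertion, one direction is immediate: if $C(f,a)$ is non-empty then a witness $c_{0} \in A_{e}$ with $al_{c_{0}} \in B_{u}$ exists by the very construction of $C(f,a)$. Conversely, suppose some $c \in A_{e}$ satisfies $al_{c} \in B_{u}$. When $B_{f}$ is finite, $c$ itself may serve as the witness required in case (\ref{Eqn:C2}), so $c \in cB_{f} = C(f,a)$. When $B_{f}$ is infinite, let $c_{0}' \in R_{f}$ be the representative of the coset $cB_{f}$ and write $c = c_{0}'b$ for some $b \in B_{f}$; then the identity yields $al_{c_{0}'} = al_{c}\cdot\alpha_{f}(b)^{-1} \in B_{u}$, so $c_{0}' \in C(f,a)$.

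The argument is essentially a definition-chase, and I do not anticipate any serious obstacles. The only delicate point is verifying that membership $al_{c} \in B_{u}$ is invariant under right multiplication of $c$ by $B_{f}$; but this is essentially the content of the calculation already carried out just before the lemma statement to show that $C(f,a)$ does not depend on the choice of $c_{0}$ in the finite case, so no new input is required.
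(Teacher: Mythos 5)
Your proof is correct and uses the same core computation as the paper's: the identity $al_{cb}=al_{c}\,\alpha_{f}(b)$, applied to both directions. You merely state the identity up front and then apply it, which is a cleaner organization but not a different argument.
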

\begin{proof}
The first statement need only be verified in the case when $B_{f}$ is finite. Let $c_{0}b\in C(f,a)$ and 
recall that $\alpha_{f}:B_{f}\rightarrow B_{u}$ is defined by 
$\alpha_{f}(z) = f_{\alpha} \alpha_{e}(z) f_{\alpha}^{-1}$.  We have 
\[
al_{c_{0}b} =a\alpha_{e}(c_{0})\alpha_{e}(b)f_{\alpha}^{-1}=a\alpha_{e}(c_{0})f_{\alpha}^{-1}\alpha_{f}(b)=al_{c_{0}}\alpha_{f}(b)\in B_{u},
\]
as required.

For the second statement, we need only prove that if 
$B_{f}$ is infinite and there exists $c\in A_{e}$ such that $al_{c}\in B_{u}$ then 
there exists $c'\in R_{f}$ such that $al_{c'}\in B_{u}$.  Write $c=c'b$, where $c'\in R_{f}$ and $b\in B_{f}$. 
Since $\alpha_{f}(b^{-1})\in B_{u}$, we have
\[
al_{c'} = a\alpha_{e}(c')f_{\alpha}^{-1} \alpha_{f}(b)\alpha_{f}(b^{-1})= \left(a \alpha_{e}(c) f_{\alpha}^{-1}\right) \alpha_{f}(b^{-1}) 
	= al_{c} \alpha_{f}(b^{-1}) \in B_{u}
\]
as required.



\end{proof}

\subsubsection*{Adjustment sequences}

Let 
\[
p=\Path{p_{0},e_{1},p_{1}, \ldots, e_{n}, p_{n}}
\]
be a reduced $\A$-path and let $\ustart, \uend$ be vertices of $\B$ with $[\ustart]=o(e_{1})$ and $[\uend]=t(e_{n})$.  
Let 
\[
\mathcal{F} = (f_{1},f_{2},\ldots,f_{n})
\]
be an edge path in $\B$ with $o(f_{1})=\ustart$, $t(f_{n})=\uend$, and $[f_{i}]=e_{i}$ for $i=1,\ldots,n$.  An 
\emph{adjustment sequence} associated with the pair $(p,\mathcal{F})$ is a sequence 
\[
\sigma = (c_{1}, c_{2}, \ldots, c_{n})
\]
with $c_{i}\in A_{e_{i}}$ for $i=1,\ldots,n$ 
that satisfies the following properties:
\begin{enumerate}[(i)]
\item $c_{1} \in C(f_{1}, p_{0})$, 
\item $c_{i} \in C(f_{i}, r_{c_{i-1}}p_{i})$ for $i=2,\ldots, n-1$, and 
\item $r_{c_{n}}p_{n} \in B_{t(f_{n})}$. 
\end{enumerate}
Note that the possible values for $c_{i}$ depend on $c_{i-1}$.
To every adjustment sequence $\sigma$ we associate a $\B$-path 
\[
Q(\sigma) = \Path{p_{0}l_{c_{1}},f_{1},r_{c_{1}}p_{1}l_{c_{2}},f_{2},\ldots,f_{n},r_{c_{n}}p_{n}}.
\]
If the path $\mathcal{F}$ has length 0, we associate with $\mathcal{F}$ an empty adjustment sequence $\sigma=\emptyset$ and define 
$Q(\sigma)=p_{0}$.  The path $Q(\sigma)$ is a $\B$-path since the elements 
$p_{0}l_{c_{1}}, r_{c_{1}}p_{1}l_{c_{2}}, \ldots, r_{c_{n}}p_{n}$ lie in the appropriate $\B$-vertex groups, by construction 
(see Lemma~\ref{Lem:Cfp}).

\begin{lemma}\label{Lem:SCPathIsCorrect}
For every adjustment sequence $\sigma$ associated with $(p,\mathcal{F})$, the path $Q(\sigma)$ is a $\B$-path satisfying
\[
\overline{\mu(Q(\sigma))} = \overline{p}.
\]
\end{lemma}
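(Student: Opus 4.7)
The proof plan is a direct verification in two stages: first show that $Q(\sigma)$ is a legitimate $\B$-path, then compute $\mu(Q(\sigma))$ and $\A$-reduce it to $p$.

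For the first stage, I would confirm that each vertex-group entry of $Q(\sigma)$ lies in the correct $\B$-vertex group. This is immediate from the defining conditions of the adjustment sequence together with Lemma~\ref{Lem:Cfp}: the condition $c_1 \in C(f_1, p_0)$ gives $p_0 l_{c_1} \in B_{\ustart}$, the condition $c_{i+1} \in C(f_{i+1}, r_{c_i} p_i)$ gives $r_{c_i} p_i l_{c_{i+1}} \in B_{o(f_{i+1})} = B_{t(f_i)}$, and the final condition gives $r_{c_n} p_n \in B_{\uend}$.

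For the second stage, I would apply $\mu$ to $Q(\sigma)$ and simplify. The definitions \eqref{Eqn:LeftAdjustment} and \eqref{Eqn:RightAdjustment} yield the telescoping identities $l_{c_i}(f_i)_{\alpha} = \alpha_{e_i}(c_i)$ and $(f_i)_{\omega} r_{c_i} = \omega_{e_i}(c_i^{-1})$, and substituting these into the formula for $\mu$ produces
$$\mu(Q(\sigma)) = \Path{p_0 \alpha_{e_1}(c_1),\; e_1,\; \omega_{e_1}(c_1^{-1}) p_1 \alpha_{e_2}(c_2),\; e_2,\; \ldots,\; e_n,\; \omega_{e_n}(c_n^{-1}) p_n}.$$

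Finally, I would show that this path is $\A$-equivalent to $p$. The plan is to realize the standard Bass--Serre edge-group sliding move --- replacing $\Path{\ldots, a, e, b, \ldots}$ by $\Path{\ldots, a\alpha_e(c),\, e,\, \omega_e(c^{-1}) b, \ldots}$ for any $c \in A_e$ --- using only the elementary reductions defined in the excerpt. Specifically, an inverse elementary reduction on the vertex entry $b$ through the edge $e^{-1}$ expands it as $\Path{1,\, e^{-1},\, \alpha_e(c),\, e,\, \omega_e(c^{-1}) b}$, and then a forward elementary reduction collapses the resulting subpath $\Path{a, e, 1, e^{-1}, \alpha_e(c)}$ to $a\alpha_e(c)$. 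Applying this sliding move at each edge $e_i$ in turn with parameter $c_i^{-1}$ causes the inserted factors $\alpha_{e_i}(c_i)$ and $\omega_{e_i}(c_i^{-1})$ in $\mu(Q(\sigma))$ to annihilate in pairs, leaving $p$. I do not expect a genuine obstacle here; the argument is essentially mechanical, and the only minor subtlety is deriving the sliding move from the primitive elementary reductions, a short insertion-then-cancellation.
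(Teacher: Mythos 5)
Your proposal is correct and follows essentially the same route as the paper: apply $\mu$, collapse $l_{c_i}(f_i)_\alpha = \alpha_{e_i}(c_i)$ and $(f_i)_\omega r_{c_i} = \omega_{e_i}(c_i^{-1})$, and recognize the result as $\A$-equivalent to $p$. The only difference is that you spell out how the sliding move $\Path{a, e, b} \sim \Path{a\alpha_e(c), e, \omega_e(c^{-1})b}$ is realized by an insertion-then-cancellation pair of elementary reductions, a step the paper leaves implicit; your derivation of that move is accurate.
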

\begin{proof}
We have 
\begin{eqnarray*}
\overline{\mu(Q(\sigma))} & = & \overline{\Path{p_{0} l_{c_{1}}(f_{1})_{\alpha}, e_{1}, (f_{1})_{\omega}r_{c_{1}}p_{1}l_{c_{2}}(f_{2})_{\alpha}, 
	\ldots, e_{n}, (f_{n})_{\omega}r_{c_{n}} p_{n}}} \\
	& = & \overline{\Path{p_{0}\alpha_{e_{1}}(c_{1}), e_{1}, \omega_{e_{1}}(c_{1}^{-1})p_{1}\alpha_{e_{2}}(c_{2}),\ldots, e_{n},\omega_{e_{n}}(c_{n}^{-1})p_{n}}} \\
	& = & \overline{p}.
\end{eqnarray*}
\end{proof}

\subsubsection*{Semi-canonical paths}

Now let $\Phi$ be the set of all edge paths $\mathcal{F}=(f_{1},\ldots,f_{n})$ 
with $o(f_{1})=\ustart$, $t(f_{n})=\uend$, and $[f_{i}]=e_{i}$ for $i=1,\ldots,n$.
We define the set 
of \emph{semi-canonical paths} $\SC{p}{\ustart}{\uend}$ as 
\[
\SC{p}{\ustart}{\uend} = \bigcup_{\mathcal{F}\in \Phi}\{ Q(\sigma) \; | \; \mbox{$\sigma$ is an adjustment sequence for $(p,\mathcal{F})$}\}.
\] 
The key properties of semi-canonical paths (Theorem~\ref{Thm:SemiCanonicalForms}) are that every path mapping onto $\overline{p}$ is 
equivalent to a semi-canonical path and that the number of semi-canonical 
paths is finite.  

\begin{theorem}\label{Thm:SemiCanonicalForms}
Let $\A$ be a graph of groups and let $\B$ be a folded $\A$-graph such that 
every infinite edge group of $\B$ has finite index in the corresponding edge group of $\A$.  
Let 
\[
p=\Path{p_{0}, e_{1}, p_{1},\ldots, e_{n},p_{n}}
\]
be an $\A$-path and let $\ustart$, $\uend$ be vertices of $\B$ with $[\ustart]=o(e_{1})$ and $[\uend]=t(e_{n})$.
Then every $\B$-path $q$ from $\ustart$ to $\uend$ that satisfies 
\begin{equation}\label{Eqn:qmapstop}
\overline{\mu(q)}=\overline{p}
\end{equation}
is equivalent to a path in the finite set $\SC{p}{\ustart}{\uend}$.  Consequently,  
$\SC{p}{\ustart}{\uend}$ is non-empty if and only if there exists a path $q$ from $\ustart$ to $\uend$ satisfying (\ref{Eqn:qmapstop}).


\end{theorem}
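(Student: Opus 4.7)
The plan is to reduce $q$ to a reduced $\B$-path whose edge sequence matches that of $p$, and then read off the adjustment sequence from its vertex-group data. I assume $p$ is reduced; the general case reduces to this one by first reducing $p$ and then tracing inverse elementary reductions through the argument.

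First, applying elementary reductions to $q$ gives an equivalent reduced $\B$-path $q^{*} = \Path{q^{*}_{0}, f_{1}, q^{*}_{1}, \ldots, f_{m}, q^{*}_{m}}$. Since $\B$ is folded, $\mu(q^{*})$ is a reduced $\A$-path; combined with $\overline{\mu(q^{*})}=\overline{p}$ and the standard fact that two equivalent reduced paths in a graph of groups have the same underlying edge sequence, this forces $m=n$ and $[f_{i}]=e_{i}$, so $\mathcal{F}=(f_{1},\ldots,f_{n})\in \Phi$. The slide equivalence between $\mu(q^{*})$ and $p$ (for matching edge sequences) yields parameters $d_{1},\ldots,d_{n}$ with $d_{i}\in A_{e_{i}}$ such that
\[
p_{0} = q^{*}_{0}(f_{1})_{\alpha}\alpha_{e_{1}}(d_{1}),\quad p_{i}=\omega_{e_{i}}(d_{i}^{-1})(f_{i})_{\omega}q^{*}_{i}(f_{i+1})_{\alpha}\alpha_{e_{i+1}}(d_{i+1}),\quad p_{n}=\omega_{e_{n}}(d_{n}^{-1})(f_{n})_{\omega}q^{*}_{n}.
\]

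Setting $c_{i}=d_{i}^{-1}$ and expanding the definitions of $l_{c}$ and $r_{c}$, a direct computation yields $q^{*}_{0}=p_{0}l_{c_{1}}$, $q^{*}_{i}=r_{c_{i}}p_{i}l_{c_{i+1}}$ for $1\le i\le n-1$, and $q^{*}_{n}=r_{c_{n}}p_{n}$; hence $q^{*}=Q((c_{1},\ldots,c_{n}))$ once we verify that $(c_{1},\ldots,c_{n})$ is a genuine adjustment sequence. Since each $q^{*}_{i}$ lies in the corresponding $\B$-vertex group, the containment requirements are already satisfied. In the finite edge group case the sequence is immediately valid, because $C(f_{i},\cdot)$ then coincides with the full set of admissible $c$. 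In the infinite case I would write $c_{i}=c'_{i}b_{i}$ with $c'_{i}\in R_{f_{i}}$ and $b_{i}\in B_{f_{i}}$ and apply, left to right, the $\B$-path slide by $b_{i}^{-1}$ across edge $f_{i}$. The identity $r_{c'_{i}}p_{i}=\omega_{f_{i}}(b_{i})r_{c_{i}}p_{i}$, together with $\omega_{f_{i}}(b_{i})\in B_{t(f_{i})}$, shows that this slide is exactly the substitution $c_{i}\to c'_{i}$ combined with absorbing an element of $B_{t(f_{i})}$ into the next vertex component---leaving both the containment conditions and the canonical adjustment sets at later positions unchanged. The resulting $\sigma$ is a valid adjustment sequence with $q\sim q^{*}\sim Q(\sigma)\in\SC{p}{\ustart}{\uend}$.

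Finiteness of $\SC{p}{\ustart}{\uend}$ is clear: $\B$ is finite, so $\Phi$ is finite, and each $C(f_{i},\cdot)$ is finite (a coset of the finite group $B_{f_{i}}$ in the finite case, or the finite set $R_{f_{i}}$ in the infinite case), giving finitely many adjustment sequences; the ``consequently'' clause then follows from Lemma~\ref{Lem:SCPathIsCorrect}. The main obstacle is the cascading bookkeeping in the infinite edge group case: each canonicalization slide must leave the subsequent $c_{j}$ both valid and canonical, and this rests on the observation that multiplying the next vertex component by an element of $B_{t(f_{i})}$ is invisible to both the vertex-group membership test and the choice of coset representative defining $C$.
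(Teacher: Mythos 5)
Your proof is correct and follows essentially the same route as the paper's. Both arguments reduce $q$ to a reduced $\B$-path $q^{*}$, read candidate adjustments $c_{i}$ off the slide relations linking $\mu(q^{*})$ to $p$, and then, wherever $B_{f_{i}}$ is infinite, canonicalize $c_{i}$ by a $\B$-path slide across $f_{i}$ by an element of $B_{f_{i}}$. The paper packages this as an induction on the length of $p$, canonicalizing one step at a time; you extract all the slide parameters at once from the normal-form theorem and then canonicalize in a single left-to-right sweep. The observation underlying both is precisely the one you isolate at the end: $C(f_{i+1},a)$ is unchanged when $a$ is multiplied on the left by an element of $B_{t(f_{i})}$, so canonicalizing position $i$ disturbs neither the validity nor the canonicity of later positions.

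One caution on a side remark: the claim that the general case ``reduces to this one by first reducing $p$ and then tracing inverse elementary reductions'' does not actually work. For non-reduced $p$ the conclusion can fail outright. For instance, take $p=\Path{1,e,\omega_{e}(c),e^{-1},1}$ with $\ustart=\uend$, and a folded $\B$ having no edge of type $e$ incident to $\ustart$: the set $\Phi$ of candidate edge paths is empty, so $\SC{p}{\ustart}{\uend}=\emptyset$, yet a length-$0$ path $q$ with $\overline{\mu(q)}=\overline{p}$ may exist. The paper's own proof silently assumes $p$ is reduced as well (the inductive step takes $q$ to have exactly $n$ edges, which requires it), and the theorem is only ever applied to reduced paths --- powers of cyclically reduced loops --- so the restriction is harmless in context. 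But it should be stated as a hypothesis rather than dismissed.
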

\begin{proof}
The fact that $\SC{p}{\ustart}{\uend}$ is finite follows from the fact that every set $C(f,a)$ is finite and there are finitely many edge paths 
of length $n$ in $\B$, hence there are finitely many adjustment sequences.  

We now proceed by induction on the length $n$ of $p$.  
Suppose that the path $q$ exists.  We may assume that $q$ is reduced.  
When $n=0$, $q$ must be precisely the length 0 path $q=\Path{p_{0}}$, and $\ustart=\uend$.  This 
path is the unique element of $\SC{p}{\ustart}{\uend}$.

Now assume the statement holds for paths of length less than $n$ and 
suppose that there exists a path 
$q=\Path{q_{0},f_{1},q_{1},\ldots,f_{n}, q_{n}}$ from $\ustart$ to $\uend$ such that $\overline{\mu(q)}=\overline{p}$.
Denote 
\[
\tail{q}{i}  =  \Path{q_{1}, f_{2}, q_{3}, \ldots, f_{n}, q_{n}}.
\]
Then there exists $c_{1}\in A_{e_{1}}$ such that $q_{0}(f_{1})_{\alpha}=p_{0}\alpha_{e_{1}}(c_{1})$ 
and $\tail{q}{1}$ satisfies
\[
\overline{\mu(\tail{q}{1})} = \overline{r_{c_{1}}\tail{p}{1}}.
\]
Let $f=f_{1}$.  
We will show that we may assume $c_{1}\in C(f,p_{0})$.

If $B_{f}$ is finite, then $C(f,p_{0})=c_{0}B_{f}=c_{1}B_{f}$ (see the proof that $C(f,a)$ does not depend on $c_{0}$, page \pageref{Page:Cfa}),
so $c_{1}\in C(f,p_{0})$ already.
If $B_{f}$ is infinite, there exist $c_{1}'\in R_{f}$ and $b\in B_{f}$ 
such that $c_{1}=c_{1}'b$.  Since $b^{-1}\in B_{f}$, we may replace $q$ by the equivalent path 
\[
\tilde{q}=\Path{q_{0}\alpha_{f}(b^{-1}), f, \omega_{f}(b)\tail{q}{1}}.
\]
Since the first element $q_{0}\alpha_{f}(b^{-1})$ of $\tilde{q}$ satisfies
\[
\left( q_{0}\alpha_{f}(b^{-1})\right) f_{\alpha} = q_{0} f_{\alpha} \alpha_{e_{1}}(b^{-1})
	= p_{0}\alpha_{e_{1}}(c_{1}b^{-1}) = p_{0}\alpha_{e_{1}}(c_{1}'), 
\]
we may assume from the beginning that $c_{1}\in C(f,p_{0})$. 

Since $\tail{q}{1}$ has length $n-1$ it is equivalent, by induction, to a semi-canonical path.  That is, there exists 
an adjustment sequence $\sigma'=(c_{2},\ldots,c_{n})$ associated 
with the path $r_{c_{1}}\tail{p}{1}$ and the edge path $(f_{2},\ldots,f_{n})$ 
such that $\tail{q}{1}$ is equivalent to $Q(\sigma')$. 
If $n=1$ then $r_{c_{1}}p_{1}\in B_{\uend}$ and if $n\geq 2$ then $c_{2}\in C(f_{2},r_{c_{1}}p_{1})$, 
hence the sequence  
$\sigma=(c_{1},\ldots,c_{n})$ is an adjustment sequence associated with $p$ and $(f_{1},\ldots,f_{n})$. Then 
\[
\overline{q}  =  \overline{\Path{q_{0},f_{1},Q(\sigma')}}
	= \overline{\Path{p_{0}l_{c_{1}}, f_{1}, r_{c_{1}}p_{1}l_{c_{2}},f_{2},\ldots,r_{c_{n}}p_{n}}} = \overline{Q(\sigma)}
\]
hence $q$ is equivalent to the semi-canonical path $Q(\sigma)$.

\end{proof}

\begin{remark}
If $p'$ is an $\A$-path with $\overline{p'}=\overline{p}$, the set $\SC{p'}{\ustart}{\uend}$ need not coincide with $\SC{p}{\ustart}{\uend}$, 
hence the adjective `semi-canonical'.  This arises from the fact that whenever $B_{f_{i}}$ is finite, the set $C(f_{i},r_{c_{i-1}}p_{i})$ depends 
on $p_{i}$.
\end{remark}


\subsection{Reading powers in an $\A$-graph}\label{Section:ReadingPowers}

When $\A$ is a benign graph of groups and $\B$ is a folded $\A$-graph, there is an algorithm that, 
given an $\A$-path $p$, decides 
whether or 
not there exists a $\B$-path $q$ such that $\overline{\mu(q)}=\overline{p}$ (see
Claim~5.14 of \cite{KWM05}).  We require a stronger version of this result (the algorithm \textsc{ReadPower} on page \pageref{Alg:ReadPower}),
which will decide whether or not there exists $q$ such that $\overline{\mu(q)}=\overline{p^{m}}$ for some $m>0$, under certain 
conditions on $\A$ and $\B$. 

We use Theorem~\ref{Thm:SemiCanonicalForms} to restrict the search for $q$ to semi-canonical forms, but consequently we must  
insist that infinite edge groups of $\B$ have finite index 
in the corresponding $\A$-edge groups.  
We assume that $\B$ is already folded, so rather than insist that $\A$ be benign (the requirement for folding), 
we specify a set of properties (Property~\ref{Property:PowerReadingConditions}) for the pair $(\A, \B)$ 
that are sufficient for our algorithm to run and terminate. To simplify the description of these properties, we begin 
by defining the following property.


\begin{property}\label{Property:DoubleCoset}
Let $G$ be a group, and let $C$ and $C'$ be subgroups of $G$. 
\begin{enumerate}[(i)]
\item We say that $G$ satisfies \emph{Property~\ref{Property:DoubleCoset}} with respect to $C$ if for every finitely generated subgroup 
$H\leq G$ and for every $x,g\in G$ with $gx^{-1}\not\in C$, if 
\[
\mbox{$H\cap C$ and $H\cap C^{x}$ are both finite}
\] 
then there exist finitely many pairs $(c,c')\in C\times C^{x}$ such that 
\[
cgc'\in H.
\]
\item We say that $G$ satisfies \emph{Property~\ref{Property:DoubleCoset}} with respect to $(C,C')$ if for every finitely generated subgroup 
$H\leq G$ and every $g\in G$, if 
\[
\mbox{$H\cap C$ and $H\cap C'$ are both finite}, 
\]
then there exist finitely many pairs $(c,c')\in C\times C'$ such that 
\[
cgc'\in H.
\]
\end{enumerate}
\end{property}

We now state the conditions we will need in order to use \textsc{ReadPower}. 
By an \emph{edge cycle} we mean a finite edge path $(f_{1},f_{2},\ldots,f_{m})$ with $t(f_{m})=o(f_{1})$.
Let $f_{m+1}$ denote $f_{1}$.

\begin{property}\label{Property:PowerReadingConditions}
Let $\A$ be a graph of groups and let $\B$ be a folded $\A$-graph.  
We say that the pair $(\A, \B)$ satisfies \emph{Property~\ref{Property:PowerReadingConditions}} if all of the following conditions hold.
\begin{enumerate}[(i)]
\item \label{PowerReading:VertexMembership} Every vertex group of $\A$ has decidable membership and power membership problem.
\item There is an algorithm that, given a vertex $v$ of $\A$, an edge $f$ of $\B$ with $[o(f)]=v$, and an element $x\in A_{v}$, 
decides whether or not the intersection 
\[
xB_{o(f)}\cap \alpha_{[f]}(A_{[f]})^{f_{\alpha}^{-1}}
\]
is empty. \label{PowerReading:Intersection}
\item \label{PowerReading:FiniteIndex} For every edge $f$ of $\B$, if $B_{f}$ is infinite then it has finite index in $A_{[f]}$.  
It must be known which $B_{f}$ are infinite.
\item \label{PowerReading:DoubleCoset} 
	For every edge cycle $f_{1},\ldots, f_{m}$ of $\B$ in which $B_{f_{i}}$ is finite for all $i$, there exists $i\in \{1,\ldots,m\}$ such that 
	\begin{enumerate}[(1)]
	\item \label{PowerReading:DoubleCosets1} 
	if $e_{i+1}\neq e_{i}^{-1}$, then $A_{[t(f_{i})]}$ satisfies Property~\ref{Property:DoubleCoset} with respect to the pair of 
	subgroups 
	\[
	\left(\omega_{e_{i}}(A_{e_{i}})^{(f_{i})_{\omega}}, \alpha_{e_{i+1}}(A_{e_{i+1}})^{(f_{i+1})_{\alpha}^{-1}}\right),
	\]
	\item \label{PowerReading:DoubleCosets2} 
	if $e_{i+1}=e_{i}^{-1}$, then $A_{[t(f_{i})]}$ satisfies Property~\ref{Property:DoubleCoset} with respect to 
	\[
	\omega_{e_{i}}(A_{e_{i}})^{(f_{i})_{\omega}},
	\]
	\end{enumerate}
	where $e_{i}=[f_{i}]$ and $e_{i+1}=[f_{i+1}]$.
\end{enumerate}
\end{property}

Let $p=\Path{p_{0}, e_{1}, p_{1}, \ldots, e_{n}, p_{n}}$ be a reduced $\A$-path which is a loop (i.e. 
$o(e_{1})=t(e_{n})$).  We say that 
$p$ is \emph{cyclically reduced} if $p^{2}$ is also reduced. 
For $i\in\{0,\ldots,n-1\}$, define 
\begin{eqnarray*}
\hat{p}_{0} & = & p_{n}p_{0}, \\
\hat{p}_{i}& = & p_{i}, \;\mbox{for $i\neq 0$}.
\end{eqnarray*}  
Indices of $\hat{p}_{i}$ are taken modulo $n$.  Note that for any $m>0$, 
\begin{equation}\label{Eqn:pn}
p^{m} = \Path{p_{0}, e_{1}, \hat{p}_{1}, e_{2}, \hat{p}_{2}, \ldots, e_{n}, \hat{p}_{0}, e_{1},\hat{p}_{1}, \ldots, e_{n},p_{n}}.
\end{equation}

The following algorithm \textsc{ReadPower} is a modified breadth-first search on the graph $\B$.  It uses a data structure  
$\mathcal{T}$ called the \emph{search tree} which is a rooted, directed, ordered tree.  Each vertex $\tau$ of $\mathcal{T}$ is labelled by 
a pair $(u,i)$ where $u$ is a vertex of $\B$ and $i$ is the distance modulo $n$ from the root to $\tau$, 
and is  
marked as either `explored' or `unexplored'.  Though $i$ is determined by the depth of $\tau$ in $\mathcal{T}$, having $i$ 
explicitly recorded will simplify 
our description and proof of the algorithm.
The root is labelled by $(\ustart, 0)$.
Each edge from a vertex labelled by $(u,i)$ to a vertex labelled by $(u^{*},i+1)$ 
is labelled by a pair $(f,c)$ where $f$ is an edge of $\B$ from $u$ to $u^{*}$ 
and $c\in A_{[f]}$.  The element $c$ will be a canonical adjustment which is `pushed forward' to the 
next vertex group.


\begin{algorithm}\label{Alg:ReadPower}
\caption{\textsc{ReadPower}}
\KwIn{Graph of groups $\A$ and a folded $\A$-graph $\B$ satisfying Property~\ref{Property:PowerReadingConditions}, 
a reduced $\A$-loop $p=<p_{0},e_{1},\ldots,e_{n},p_{n}>$, and vertices $\ustart,\uend$ of $\B$.}
\KwOut{A pair $(q,m)$ where $q$ is a $\B$-path from $\ustart$ to $\uend$ and $m$ is a positive integer $m$ such that 
$\overline{\mu(q)}=\overline{p^{m}}$, with $m$ minimal over all such pairs.  If no pair exists, the word `No'.}
$\mathcal{T}:=\mbox{single root vertex labelled by $(\ustart,0)$ and marked `unexplored'}$\;
\If{$[\ustart]\neq o(e_{1})$ or $[\uend]\neq t(e_{n})$}{\Return `No'\;}
\While{$\mathcal{T}$ has unexplored vertices \label{Code:While}}{
	$\tau:=$ unexplored vertex having minimum distance \label{Code:Tau}
	$d$ from the root and, of all unexplored vertices at distance $d$, is left-most\;
	$(u,i):=\mbox{label of $\tau$}$\;
	$e:=e_{i+1}$\;
	\If{$\tau$ is the root}{
		\If{$\mathrm{Length}(p)=0$}{
			\If{$B_{\ustart}\cap\langle p_{0}\rangle \neq 1$\label{Code:BaseCaseIntersection}}{
				\Return $(p_{0}^{m},m)$ with $m>0$ minimal such that $p_{0}^{m}\in B_{\ustart}$\label{Code:BaseCaseReturn}\;
			}
			\Else{\Return `No'\;}
		}
		\Else{$a:=p_{0}$\;} \label{Code:a}
		}
	\Else{
		$\tau':=\mathrm{Parent}(\tau)$\;
		$(f',c'):=\mbox{label of edge $\tau'\rightarrow\tau$}$\;
		$a:=r_{c'}\hat{p}_{i}$\;
	}
	$F := \{f\in \mathrm{Edges}(\B)\; | \; [f]=e,\, o(f)=u\}$\;
	\For{each $f\in F$}{
		\If{$\exists\, c\in A_{e}$ such that $al_{c}\in B_{u}$ \label{Code:Cf}}{
			\For{each $c\in C(f,a)$\label{Code:Cfstar}}{
				\If{$i=n-1$ and $t(f)=\uend$ and $r_{c}p_{n}\in B_{\uend}$ \label{Code:Termination}}{
					$m:=(d+1)/n$\;
					Let $(f_{1},c_{1}),\ldots,(f_{d},c_{d})$ be the edge labels of the path from the root to $\tau$\;
					$\sigma:=(c_{1},\ldots,c_{d},c)$\;
					\Return $(Q(\sigma), m)$ \label{Code:ReturnYes}\;
					}
				\Else{
					\If{no vertex $(t(f), i+1)$ in $\mathcal{T}$ has incoming edge $(f,c)$\label{Code:NewNode}}{
						Insert a child $\tau''$ of $\tau$ with label $(t(f),i+1)$ \label{Code:NewNode2}\;
						Label the edge $\tau\rightarrow\tau''$ by $(f,c)$\label{Code:NewNode3}\;
						Mark $\tau''$ as unexplored\;
						}
					}
				}
			}
		}
	Mark $\tau$ as explored\;
	}
	\Return `No'\label{Code:ReturnNo}\;

\end{algorithm}

\begin{theorem}\label{Thm:ReadingPowers}
There is an algorithm \textsc{ReadPower} that, given
\begin{itemize}
\item a graph of groups $\A$ and a folded $\A$-graph $\B$ satisfying the conditions of Property~\ref{Property:PowerReadingConditions}, 
\item a cyclically reduced $\A$-loop $p$, and
\item vertices $\ustart,\uend$ of $\B$,
\end{itemize}
decides whether 
or not there exists a $\B$-path $q$ from $\ustart$ to $\uend$ and an integer $m>0$ such that 
\[
\overline{\mu(q)}=\overline{p^{m}}
\]
 and if so, produces $q$ and the 
minimum such $m$.
\end{theorem}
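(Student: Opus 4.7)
I would structure the argument into effectiveness of each step, correctness of the output, completeness and minimality of $m$, and termination. The first three parts are direct applications of earlier results; termination is the main obstacle.

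For \emph{effectiveness}, each operation invokes one of the conditions of Property~\ref{Property:PowerReadingConditions}. The power-membership check on line~\ref{Code:BaseCaseIntersection} is handled by~(\ref{PowerReading:VertexMembership}); the existence test on line~\ref{Code:Cf} rewrites $al_c = a\alpha_e(c)f_\alpha^{-1} \in B_u$ as a non-emptiness query of the form appearing in~(\ref{PowerReading:Intersection}); and the set $C(f,a)$ is finite and effectively enumerable, either as the coset $c_0 B_f$ when $B_f$ is finite (with $c_0$ produced by the existence test), or as the finite transversal $R_f$ filtered by vertex membership when $B_f$ is infinite (using~(\ref{PowerReading:FiniteIndex})).

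For \emph{correctness}, along any root-to-node branch of $\mathcal{T}$ the sequence of edge labels $(f_1,c_1),\ldots,(f_d,c_d)$ is, by construction, an adjustment sequence for the prefix of $p^m$ of length $d$, so that when the termination check on line~\ref{Code:Termination} fires with $d+1 = mn$ and $r_c p_n \in B_{\uend}$, Lemma~\ref{Lem:SCPathIsCorrect} applied to $p^m$ yields $\overline{\mu(Q(\sigma))} = \overline{p^m}$. For \emph{completeness and minimality}, suppose a minimum solution $(q,m)$ exists. By Theorem~\ref{Thm:SemiCanonicalForms}, $q$ is equivalent to some $Q(\sigma)$ with $\sigma = (c_1,\ldots,c_{mn})$ an adjustment sequence for $p^m$. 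I would show by induction on $k$ that $\mathcal{T}$ contains a node at depth $k$ whose incoming-edge labels from the root are $(f_1,c_1),\ldots,(f_k,c_k)$: the memoization on line~\ref{Code:NewNode} is sound because $C(f_{k+1}, r_{c_k}\hat{p}_{k \bmod n})$ depends only on $c_k$ and the fixed data $p,\B$, not on prior history. Breadth-first selection on line~\ref{Code:Tau} then ensures that the first termination triggers at the minimum depth $mn$.

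For \emph{termination}, suppose for contradiction that $\mathcal{T}$ is infinite. Each node has finite out-degree, since each $C(f,a)$ is finite and finitely many edges leave a given vertex, so K\"onig's lemma produces an infinite branch. Because $\B$ has finitely many edges and the position index cycles mod $n$, a tail of this branch traces a single closed edge cycle $(f_1,\ldots,f_{m'})$ of $\B$ repeatedly; the no-duplicate rule on line~\ref{Code:NewNode} forces the tuples $(c_1^{(s)},\ldots,c_{m'}^{(s)})$ produced on successive traversals $s = 1, 2, \ldots$ to be pairwise distinct. If some $B_{f_k}$ on the cycle is infinite, then~(\ref{PowerReading:FiniteIndex}) confines $c_k^{(s)}$ to the fixed finite set $R_{f_k}$, bounding the number of possible tuples, a contradiction. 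Otherwise every $B_{f_k}$ on the cycle is finite, and~(\ref{PowerReading:DoubleCoset}) supplies an index $i$ where $A_{[t(f_i)]}$ satisfies Property~\ref{Property:DoubleCoset}. Unwinding the definitions of $l_c$ and $r_c$, the constraint that $c_i^{(s)}$ extends $c_{i-1}^{(s)}$ into a closed traversal of the cycle translates into requiring that a product $c\, g\, c'$ lies in a fixed finitely generated subgroup $H$ of $A_{[t(f_i)]}$, with $c,c'$ ranging over the two edge-image subgroups named in~(\ref{PowerReading:DoubleCoset}) (or over the same subgroup in subcase~(\ref{PowerReading:DoubleCosets2})) and $g$ determined by the cycle. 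Since every $B_{f_k}$ is finite, the relevant intersections $H \cap C$ and $H \cap C'$ (respectively $H \cap C$ and $H \cap C^x$) are finite, and Property~\ref{Property:DoubleCoset} therefore bounds the admissible pairs $(c_{i-1}^{(s)},c_i^{(s)})$ to a finite set, contradicting distinctness. This establishes termination and completes the proof.
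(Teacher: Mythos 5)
Your overall architecture matches the paper's proof (effectiveness, correctness, minimality, termination, with termination as the crux and Property~\ref{Property:DoubleCoset} as the engine), but the termination argument contains a genuine gap, and two smaller issues deserve attention.

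The periodicity claim is unjustified. You assert that ``a tail of this branch traces a single closed edge cycle $(f_1,\ldots,f_{m'})$ of $\B$ repeatedly.'' Finiteness of $\B$ and cyclicity of the position index mod $n$ do not force eventual periodicity of the $\B$-edge sequence along an infinite branch: the branch can wander through $\B$ aperiodically, visiting many different cycles. What pigeonhole actually delivers is weaker but sufficient: among consecutive edge pairs $(f_{j-1},f_j)$ together with the position $i_j$, only finitely many values are possible, so some triple $(f,f',i)$ recurs infinitely often along the branch, and (separately) one shows that eventually every $f_j$ has finite $B_{f_j}$, because the memoization rule permits at most one edge with a given label $(f,c)$ and terminal $(t(f),i)$, and when $B_f$ is infinite the $c$'s are confined to the fixed finite set $R_f$. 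The Property~\ref{Property:DoubleCoset} argument then runs at the recurring pair $(f,f')$, not at a fixed position of a fixed cycle. Your proposed inference from a single infinite $B_{f_k}$ on ``the cycle'' has the same problem: it only bounds one coordinate of the tuple, which says nothing about the other coordinates unless periodicity holds. Your conclusion there is rescuable by pigeonhole on that single coordinate (two traversals with the same $c_k$ would duplicate an edge label with the same terminal label, violating the no-duplicate rule), but again that requires the unjustified periodicity.

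Cyclic reducedness is used but not by you. Property~\ref{Property:DoubleCoset} comes in two flavours: with respect to a pair $(C,C')$, and with respect to a single $C$, and in the second flavour there is a hypothesis $gx^{-1}\notin C$. When the recurring consecutive pair has $e'=e^{-1}$, one lands in the single-subgroup case, and verifying $gx^{-1}\notin C$ is precisely where cyclic reducedness of $p$ enters: if $gx^{-1}\in C$ one deduces $\hat{p}_i\in\omega_e(A_e)$, contradicting that $p^2$ is reduced. You invoke ``the same subgroup in subcase~(\ref{PowerReading:DoubleCosets2})'' without checking this hypothesis, so this part of the argument is incomplete.

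Finally, your completeness induction is stated too strongly. You claim that $\mathcal{T}$ contains a node at depth $k$ whose path from the root has edge labels exactly $(f_1,c_1),\ldots,(f_k,c_k)$; but the memoization on line~\ref{Code:NewNode} can prevent exactly that node from being created, substituting a node at a different depth (with the same $\B$-vertex, position mod $n$, and incoming label). You correctly identify why the memoization is sound — the future search from a node depends only on that triple — but the argument must then account for re-routing: either the re-routed node is strictly shallower modulo $n$ (giving a smaller power and contradicting minimality) or it is at the same depth and to the left (and one iterates). Without this case split the minimality of the returned $m$ is not actually established.
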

\begin{proof}


\emph{Algorithm effectiveness.}
First, we check that all steps of the algorithm may in fact be carried out. 
The only non-obvious steps are those on lines 
\ref{Code:BaseCaseIntersection}, 
\ref{Code:BaseCaseReturn}, \ref{Code:Cf}, \ref{Code:Cfstar}, and \ref{Code:Termination}. 

Line \ref{Code:BaseCaseIntersection} is an instance of the power membership problem in $A_{[\ustart]}$, hence is decidable.
For line \ref{Code:BaseCaseReturn}, we check, using decidability of the membership 
problem in $A_{[\ustart]}$, if each of $p,p^{2},p^{3},\ldots$ is in $B_{\ustart}$ and return the first success.

On line \ref{Code:Cf}, deciding the existence of $c$ is equivalent to deciding if the intersection 
\[
\left(f_{\alpha}a^{-1}\right) B_{u}\cap \alpha_{e}(A_{e})^{f_{\alpha}^{-1}},
\] 
in the vertex group $A_{[u]}$, is non-empty.  This is decidable by Property~\ref{Property:PowerReadingConditions}(\ref{PowerReading:Intersection}).

On line \ref{Code:Cfstar}, we compute the set $C(f,a)$.  It is known which $B_{f}$ are infinite.
If $B_{f}$ is finite, we simply search for 
$c\in A_{e}$ such that $al_{c}\in B_{u}$, which is known to exist.  If $B_{f}$ is infinite, we first construct a set of coset representatives 
$R_{f}$ of $B_{f}$ in $A_{[f]}$ (we can construct the Schrier graph since membership is decidable). Then we check 
the condition $al_{c}\in B_{u}$ for each $c\in R_{f}$.  
Line \ref{Code:Termination} also involves a membership problem 
in a vertex group of $\A$, and membership is decidable by Property~\ref{Property:PowerReadingConditions}(\ref{PowerReading:VertexMembership}).


\emph{Algorithm termination.}
Next, we ensure that the algorithm terminates.  Every iteration of the \textbf{while} loop on line \ref{Code:While} marks one vertex of $\mathcal{T}$ 
as explored, so it suffices to show that the size of $\mathcal{T}$ is bounded.  Since $F$ and $C(f,a)$ are always finite sets, every vertex in 
$\mathcal{T}$ has finitely many children, so its suffices to show that the depth of $\mathcal{T}$ is bounded.

Observe that an 
edge of $\mathcal{T}$ labelled by $(f, c)$ having terminal vertex labelled by $(t(f), i)$ 
occurs at most once in $\mathcal{T}$: once such an edge exists, 
the conditional on line \ref{Code:NewNode} prevents it 
from being created a second time.  The number of possible values for $f$ and $i$ is finite.  If $B_{f}$ is infinite, then 
the set $C(f,a)$ is, regardless of $a$, a subset of the fixed, finite set of coset representatives $R_{f}$. It follows that 
there are finitely many edges in $\mathcal{T}$ labelled by $(f,c)$ such that $B_{f}$ is infinite.  
Note that when $B_{f}$ is finite the set 
$C(f,a)$ is finite but depends on $a$.  

Assume that the depth of $\mathcal{T}$ is unbounded.  Then $\mathcal{T}$ contains an infinite path  
\begin{equation}\label{Eqn:InfinitePath}
\tau_{0}\rightarrow\tau_{1}\rightarrow\tau_{2}\rightarrow\ldots.
\end{equation}
We denote by $(f_{j}, c_{j})$ the label of the edge from $\tau_{j}$ to $\tau_{j+1}$.
By the above observation, there exists $M>0$ such that 
$B_{f_{j}}$ is finite for all $j>M$. Let $N$ be the number of edges in $\B$.  For all $j>M$, the sequence of edges 
\[
f_{j},f_{j+1},\ldots,f_{j+N}
\]
must contain a subsequence which is a edge cycle in $\B$, and so must contain a sequential pair of edges $f,f'$ satisfying the statements 
in Property~\ref{Property:PowerReadingConditions}(\ref{PowerReading:DoubleCoset}).  Since the path (\ref{Eqn:InfinitePath}) is infinite, 
there exists such a pair that occurs infinitely often.  Hence there exists  
a sequential pair of edges $f,f'$ satisfying the statements in Property~\ref{Property:PowerReadingConditions}(\ref{PowerReading:DoubleCoset}), 
an integer $i\in \{0,\ldots,n-1\}$,
and an infinite subset $J\subset \naturals_{>M}$ such that for all $j\in J$, $f_{j-1}=f$, $f_{j}=f'$, and $\tau_{j}$ is labelled by $(u, i)$, 
where $u=t(f)$.

Let $e=[f]$, $e'=[f']$, and denote $C=\omega_{e}(A_{e})^{f_{\omega}}$ and 
$C'=\alpha_{e'}(A_{e'})^{(f')_{\alpha}^{-1}}$.  Since $\B$ is folded, the edge group $B_{f}$ is defined by 
\[
B_{f} = \omega_{e}^{-1}\left(B_{u}^{f_{\omega}^{-1}}\cap \omega_{e}(A_{e}) \right).
\]
Since $B_{f}$ is finite,  $B_{u}^{f_{\omega}^{-1}}\cap \omega_{e}(A_{e})$ is also finite hence 
$B_{u}\cap C$ is finite.
Since $B_{f}$ is also defined by 
\[
B_{f} = \alpha_{e'}^{-1}\left(B_{u}^{f'_{\alpha}}\cap \alpha_{e'}(A_{e'}) \right),
\]
it follows in the same manner that $B_{u}\cap C'$ is finite.

Consider first the case when $e'\neq e^{-1}$.  We know that $A_{[u]}$ satisfies Property~\ref{Property:DoubleCoset} with 
respect to $(C,C')$.
By construction, $r_{c_{j-1}}\hat{p}_{i}l_{c_{j}}\in B_{u}$ for all $j\in J$, and we may rewrite 
this element as 
\[
	\left(f_{\omega}^{-1} \omega_{e}(c_{j-1}^{-1})\right) \hat{p}_{i} \left(\alpha_{e'}(c_{j})(f')_{\alpha}^{-1}\right) =
	\omega_{e}(c_{j-1}^{-1})^{f_{\omega}} \left(f_{\omega}^{-1}\hat{p}_{i}(f')_{\alpha}^{-1}\right)\alpha_{e'}(c_{j})^{(f')_{\alpha}^{-1}}.
\]
Set $H=B_{u}$ and $g=f_{\omega}^{-1}\hat{p}_{i}(f')_{\alpha}^{-1}\in A_{[u]}$. Since $B_{u}\cap C$ and $B_{u}\cap C'$ are both finite,
Property~\ref{Property:DoubleCoset} states 
that there are finitely many pairs $(c_{j-1}, c_{j})$ for which 
$r_{c_{j-1}}\hat{p}_{i}l_{c_{j}}\in B_{u}$.  Hence there exists $c$ such that $c_{j-1}=c$ for infinitely many $j\in J$.
This contradicts the fact that $\mathcal{T}$ contains at most one edge labelled by $(f, c)$ with terminal vertex labelled 
by $(t(f), i)$.
 
If $e'=e^{-1}$, we have $\alpha_{e'}=\omega_{e}$ and we may rewrite the element $r_{c_{j-1}}\hat{p}_{i}l_{c_{j}}$ as 
\[
	\omega_{e}(c_{j-1}^{-1})^{f_{\omega}} 
	\left(f_{\omega}^{-1}\hat{p}_{i}(f')_{\alpha}^{-1}\right)
	\left(\omega_{e}(c_{j})^{f_{\omega}}\right)^{f_{\omega}^{-1}(f')_{\alpha}^{-1}}.
\]
Set $H=B_{u}$, $g=\left(f_{\omega}^{-1}\hat{p}_{i}(f')_{\alpha}^{-1}\right)$, and $x=f_{\omega}^{-1}(f')_{\alpha}^{-1}$.  
Since $gx^{-1} = \hat{p_{i}}^{f_{\omega}}$, if $gx^{-1}\in C$ then $\hat{p_{i}}\in\omega_{e}(A_{e})$ which contradicts the 
fact that $p$ is cyclically reduced. Hence $gx^{-1}\not\in C$, so Property~\ref{Property:DoubleCoset} states that there are finitely 
many pairs $(c_{j-1}, c_{j})$ such that $r_{c_{j-1}}\hat{p}_{i}l_{c_{j}}\in B_{u}$ and we obtain a contradiction as above.



\emph{Algorithm correctness.} Finally, we prove that the algorithm is correct.  The cases when $p$ has length zero and 
when $[\ustart]\neq o(e_{1})$ or $[\uend]\neq t(e_{n})$ are clearly correct. 

First, suppose the algorithm returns the pair $(Q(\sigma), m)$ on line \ref{Code:ReturnYes}. 
It follows immediately from lines \ref{Code:Cfstar} and \ref{Code:Termination} that 
$(c_{1},\ldots,c_{d},c)$ is an adjustment sequence for $p^{m}$, hence the desired path $q=Q(\sigma)$ exists. 
We will argue the minimality of $m$ below.

Now suppose that the algorithm reaches line \ref{Code:ReturnNo}, returning `No'.  Assume, for contradiction, that there exists a $\B$-path 
$q$ from $\ustart$ to $\uend$ and an integer $m>0$ such that $\overline{\mu(q)}=\overline{p^{m}}$, and assume that $m$ is minimal.  Then the set 
$\SC{p^{m}}{\ustart}{\uend}$ is non-empty, so there exist an edge sequence $\mathcal{F}=(f_{1},\ldots,f_{nm})$ 
and an adjustment sequence $\sigma=(c_{1},\ldots,c_{nm})$ associated with $(p^{m}, \mathcal{F})$.  
Since the algorithm searches exhaustively for adjustment sequences, it will discover this sequence, recording it in $\mathcal{T}$ during lines \ref{Code:NewNode2} and \ref{Code:NewNode3}, 
and reaching the return statement on line \ref{Code:ReturnYes}, 
unless the conditional on line \ref{Code:NewNode} fails.  Assume then that such a failure first occurs at position $j$ in the adjustment sequence: that is, 
\[
j=i \pmod{n},
\]
and 
the edge $\tau'\rightarrow\tau$ has label $(f_{j}, c_{j})$, but the required child $\tau''$ of $\tau$ is \emph{not} created 
because $\mathcal{T}$ already contains a vertex with label 
\[
(t(f_{j+1}), j+1)
\]
having incoming edge with label 
\[
(f_{j+1}, c_{j+1}).
\]
Let $\tau_{k+1}$ be this pre-existing vertex, let $\tau_{0},\tau_{1},\ldots,\tau_{k+1}$ be the vertices along the unique 
path from the root $\tau_{0}$ to $\tau_{k+1}$, and let $(f_{0}',c_{0}'),\ldots, (f_{k}',c_{k}')$ be the sequence of edge labels along 
this path. Note that $\tau_{k}$ is at distance $k$ from the root, the current search vertex $\tau$ (line \ref{Code:Tau}) is at 
distance $j$ from the root, and that $k=j\pmod{n}$.

Since $\tau_{k}$ has a child (and is not equal to $\tau$), it must be marked `explored'.  
Since vertices of $\mathcal{T}$ are processed in order of distance from the root then left-to-right, 
either $k<j$ or $k=j$ and $\tau_{k}$ appears to the left of $\tau$. 
Consider the sequence of elements 
\[
\sigma''=(c_{1}',c_{2}',\ldots,c_{k}',c_{j+1},c_{j+2},\ldots,c_{mn})
\]
and the edge path 
\[
\mathcal{F}''=(f_{1}',f_{2}',\ldots,f_{k}',f_{j+1},f_{j+2},\ldots,f_{mn}).
\]
Since 
\[
(f_{k+1}', c_{k+1}') = (f_{j+1}, c_{j+1}),
\]
it follows that $\sigma''$ is an adjustment sequence associated with either $(p^{m}, \mathcal{F''})$,  
in the case $k=j$, or with $(p^{m'}, \mathcal{F''})$ where 
\[
m' = \frac{j-k}{n},
\]
in the case $k<j$.  In the latter case, the path $Q(\sigma'')$ is in $\SC{p^{m'}}{\ustart}{\uend}$, 
contradicting the minimality of $m$.  In the former case, we may replace the original edge path $\mathcal{F}$ and adjustment sequence 
$\sigma$ by $\mathcal{F}''$ and $\sigma''$ and repeat the argument.  
Since $\tau_{k}$ was to the left of $\tau$ in $\mathcal{T}$ we may, after finitely 
many such replacements, reduce 
to the case $k<j$ and obtain the contradiction above.

The above argument also demonstrates the minimality of the value $m$ returned on line \ref{Code:ReturnYes}.  If $m$ is not minimal, 
there is a semi-canonical path $q$ for $p^{m'}$, with $m'$ minimal, that the algorithm \emph{does not find} (it searches 
breadth-first, and returns the first success). The failure cannot be due to the 
case $k<j$, since this implies that $m'$ is not minimal.  The failure must occur in the $k=j$ case, but we may then repeat the argument with 
a semi-canonical path $q'$ passing through the vertex $\tau_{k}$ to the left of $\tau$.  After finitely many steps we again reduce to the case 
$k<j$ and obtain a contradiction.

\end{proof}

By combining the \textsc{ReadPower} algorithm with $\A$-graph folding, we may solve the power coset membership problem in 
certain graphs of groups.

\begin{theorem}\label{Thm:PowerCosetMembership}
Let $G$ be the fundamental group of a benign graph of groups $\A$ in which 
every vertex group has decidable power 
coset membership problem, and such that for every folded $\A$-graph $\B$, the pair $(\A,\B)$ 
satisfies Property~\ref{Property:PowerReadingConditions}.  Then the power coset membership problem is decidable in $G$.
\end{theorem}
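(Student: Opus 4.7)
The plan is to construct a folded $\A$-graph representing the coset $xH$ via Theorem~\ref{Thm:CosetGraph} and then apply \textsc{ReadPower} (Theorem~\ref{Thm:ReadingPowers}) to decide whether a power of $g$ can be read in it. Given $x,g\in G$ and a finitely generated $H\leq G$, we first preprocess $g$: if its reduced $\A$-loop representative is not cyclically reduced, we write $g=cg_{0}c^{-1}$ with $g_{0}$ cyclically reduced, which is computable since $\A$ is benign and vertex-group membership is decidable. The condition $g^{m}\in xH$ is equivalent to $g_{0}^{m}\in(c^{-1}xc)(c^{-1}Hc)$, so we may replace the instance accordingly and assume $g$ is cyclically reduced; then $g^{-1}$ is cyclically reduced as well.

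Applying Theorem~\ref{Thm:CosetGraph} to $x$ and $H$ returns either (a) a folded $\A$-graph $\Hx$ with distinguished vertices $u_{x}\neq u_{0}$ satisfying $\overline{L(\Hx,u_{x},u_{0})}=xH$, or (b) an element $y\in A_{v_{0}}$ of path length $0$ with $yH=xH$. In case (a) we call \textsc{ReadPower} on $(\A,\Hx,g,u_{x},u_{0})$ to detect positive powers and on $(\A,\Hx,g^{-1},u_{x},u_{0})$ to detect negative powers. The pair $(\A,\Hx)$ satisfies Property~\ref{Property:PowerReadingConditions} by hypothesis, and since power coset membership is decidable in every vertex group, so is power membership, verifying Property~\ref{Property:PowerReadingConditions}(\ref{PowerReading:VertexMembership}). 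Both calls terminate and correctly decide their respective questions; we return `Yes' if either succeeds and `No' otherwise.

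In case (b) we split on the length of $g$. If $g$ has length $0$, then both $g$ and $g^{m}$ lie in $A_{v_{0}}$, so $g^{m}\in yH$ iff $g^{m}\in y(H\cap A_{v_{0}})=yB_{u_{0}}$, where $H\cap A_{v_{0}}=B_{u_{0}}$ is read off from the subgroup graph for $H$ built via Theorem~\ref{Thm:Folding}; the decidable power coset membership in $A_{v_{0}}$ settles this. If $g$ has positive length, we use the equivalence $g^{m}\in yH\iff y^{-1}g^{m}\in H$, construct the subgroup graph $\B$ for $H$, and run a small variant of \textsc{ReadPower} on $(\A,\B,g,u_{0},u_{0})$ in which the root-level vertex element $p_{0}$ is replaced by $y^{-1}p_{0}$; all subsequent iterations of the breadth-first search are unchanged, so the termination and correctness proofs of Theorem~\ref{Thm:ReadingPowers} carry over with only cosmetic modifications to the base case. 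The same variant applied to $g^{-1}$ detects negative powers.

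The main obstacle has already been discharged inside Theorem~\ref{Thm:ReadingPowers}, whose termination rests on Property~\ref{Property:DoubleCoset} in the vertex groups. What remains here is essentially bookkeeping: verifying that each subroutine required (folding, coset graphs, cyclic reduction, vertex-group power coset queries) is available under the benign and power-coset-membership hypotheses, and adapting the algorithm to the corner case in which $xH$ admits a length-$0$ representative.
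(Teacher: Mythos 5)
Your proposal follows the paper's proof step for step: preprocess by cyclic reduction, apply Theorem~\ref{Thm:CosetGraph}, and in case~(2) run \textsc{ReadPower} on $\Hx$ for $p$ and $p^{-1}$, while in case~(1) split on $\mathrm{Length}(p)$, reducing either to power coset membership in $A_{v_0}$ or to the ``$a:=y^{-1}p_0$'' variant of \textsc{ReadPower} on the subgroup graph $\B$. The one step that does not work as written is the cyclic-reduction preprocessing. You write $g=cg_0c^{-1}$ and then use $g^m\in xH\iff g_0^m\in(c^{-1}xc)(c^{-1}Hc)$; for that equivalence to be an instance of the same problem in $\piA$ you need $c\in\piA$, i.e.\ $c$ is an $\A$-loop at $v_0$. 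But it is \emph{not} true that every $g\in\piA$ is conjugate within $\piA$ to an element admitting a cyclically reduced $\A$-loop representative based at $v_0$. For example, in a two-vertex amalgam $A_{v_0}\ast_{A_e}A_{v_1}$, an element $a\in A_{v_1}$ not conjugate into the edge group is represented at $v_0$ only by paths of the form $\Path{g_1,e,a',e^{-1},g_2}$, none of which is cyclically reduced, and the same is true of all of its $\piA$-conjugates. The correct move, which the paper makes, is to conjugate by the $\A$-\emph{path} $p_0\cdot\Path{1,e_1,1}$ from $v_0$ to $v_1$, thereby passing to the isomorphic group $\pi_1(\A,v_1)$ with a new base vertex; iterating this reduces $|p|$ and terminates with a cyclically reduced loop at some (possibly new) vertex, and the coset-membership instance is transported along the base-change isomorphism. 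Once you allow $c$ to be a path rather than a loop and keep track of the base vertex, your argument becomes the paper's and is correct.
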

\begin{proof}
Let $v_{0}$ be the base vertex of $\A$, so that $G=\piA$.
Assume we are given as input to the power coset membership problem 
elements $x,p,h_{1},\ldots,h_{s}\in G$, expressed as $\A$-loops based at $v_{0}$.  The decidability of the membership problem
in vertex groups allows us to reduce paths, so we may assume that these paths are reduced.  Let $H=\langle h_{1}, \ldots, h_{s}\rangle$ and 
$p=\Path{p_{0}, e_{1}, p_{1}, e_{2}, \ldots,e_{n},p_{n}}$.

Suppose that $p$ is not cyclically reduced, that is, $p^{2}$ is not a reduced path.  
Then $e_{1}=e_{n}^{-1}$ and $p_{n}p_{0}=\omega_{e_{n}}(c)$ for some 
$c\in A_{e_{n}}$.  
Consider the conjugate 
\[
p^{p_{0}}=\Path{1,e_{1},p_{1},\ldots,e_{n},p_{n}p_{0}}\sim \Path{1,e_{1},p_{1},\ldots,p_{n-1}\alpha_{e_{n}}(c),e_{n},1}.
\]
Since $(p^{p_{0}})^{m}\in x^{p_{0}}H^{p_{0}}$ if and only if $p^{m}\in xH$, 
we may 
assume from the beginning that $p_{0}=p_{n}=1$.

Let $v_{1}=t(e_{1})$. 
Under the natural isomorphism $\piA\simeq \pi_{1}(\A,v_{1})$, the image of $p$ is the reduced path 
$p'=\Path{p_{1},e_{2},\ldots,e_{n-1},p_{n-1}}$ and we consider the same problem with $v_{1}$ as the base vertex instead of $v_{0}$
(replacing $x$ and $H$ by their images under this isomorphism).  
If $p'$ is not cyclically reduced, 
we may repeat this procedure, reducing the length of $p$ each time, until $p$ is cyclically reduced.  
Hence we may assume from the beginning that $p$ is cyclically reduced. We continue to denote the base vertex by $v_{0}$.

Since $\A$ is benign, we may apply Theorem~\ref{Thm:CosetGraph} with input $H$ and $x$.  
Suppose the algorithm reaches the second case, returning the coset graph $\Hx$.  

The power coset membership problem has a positive answer if and only if  
there exists 
a path $q$ in $\Hx$ from $u_{x}$ to $u_{0}$ such that $\overline{\mu(q)}=\overline{p^{m}}$ or 
$\overline{\mu(q)}=\overline{(p^{-1})^{m}}$ for some $m>0$.  We can decide this using Theorem~\ref{Thm:ReadingPowers} 
(i.e. the \textsc{ReadPower} algorithm), since the pair $(\A, \Hx)$ satisfies 
Property~\ref{Property:PowerReadingConditions} by assumption.

Now suppose the algorithm of Theorem~\ref{Thm:CosetGraph} returns a path $y$ of length 0 
such that $yH=xH$. 
Construct the folded $\A$-graph $\B$ described in Theorem~\ref{Thm:Folding} 
and let $u_{0}$ be the base vertex of $\B$ and $B_{u_{0}}$ the associated vertex group.

Consider first the case when $p$ has length 0. Since $p^{m}$ and $y$ are both in the base vertex group $A_{v_{0}}$, 
$p^{m}\in yH$ if and only if 
$p^{m}\in y(H\cap A_{v_{0}})$. But $H\cap A_{v_{0}}=B_{u_{0}}$ (since $\mu$ preserves path length), hence the problem reduces 
to an instance of power coset membership in the vertex group $A_{v_{0}}$, which is decidable. 

Now consider the case when $p$ has non-zero length.  We must determine whether or not there exists a $\B$-path $q$ from $u_{0}$ to 
$u_{0}$ such that $\overline{\mu(q)}=\overline{y^{-1}p^{m}}$ or $\overline{\mu(q)}=\overline{y^{-1}(p^{-1})^{m}}$, for some $m>0$. 
We make a slight modification to the \textsc{ReadPower} algorithm, replacing the statement `$a:=p_{0}$' on line \ref{Code:a} by 
`$a:=y^{-1}p_{0}$' (and similarly for $p^{-1}$).  The modified algorithm clearly solves the problem.
\end{proof}

Note that the preconditions for Theorem~\ref{Thm:PowerCosetMembership}, in particular 
Property~\ref{Property:PowerReadingConditions}(\ref{PowerReading:DoubleCoset}), may be difficult to establish.  
In the next section, we 
will use quasi-convexity properties to establish this for groups obtained by iterated centralizer extensions, 
but let us mention here the simple case when edge groups of $\A$ are finite.

\begin{corollary}\label{Cor:FiniteEdgeGroups}
Let $G$ be the fundamental group of a graph of groups in which all edge groups are finite and all vertex groups have 
decidable power coset membership problem.  Then the power coset membership problem is decidable in $G$.
\end{corollary}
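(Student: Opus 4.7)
The plan is to invoke Theorem~\ref{Thm:PowerCosetMembership}, which reduces the corollary to verifying two things: that $\A$ is benign, and that for every folded $\A$-graph $\B$ the pair $(\A,\B)$ satisfies Property~\ref{Property:PowerReadingConditions}. The entire argument hinges on a single observation: every edge group $A_e$ is finite, so every subgroup $B_f \leq A_{[f]}$ occurring in a folded $\A$-graph is also finite. This finiteness trivializes each of the conditions below in turn, so the only real work is bookkeeping.

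First I would verify benignness. Decidability of power coset membership in a vertex group $A_v$ immediately implies decidability of the ordinary membership problem there (take $H=1$). Conditions (\ref{BenignNoetherian}) and (\ref{BenignEdgeMembership}) are immediate since every edge group is finite. For (\ref{BenignCoset}) and (\ref{BenignGenSet}), the set $\alpha_e(A_e)$ is a finite subset of $A_v$; given $X\subset A_v$ and $a\in A_v$, enumerate $a\alpha_e(A_e)$ (respectively $\alpha_e(A_e)$) and test each element for membership in $\langle X\rangle$ using the membership algorithm in $A_v$. This either produces an element of the intersection or reports emptiness, and in the second case yields a finite generating set.

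Next I would check Property~\ref{Property:PowerReadingConditions} for an arbitrary folded $\A$-graph $\B$. Condition (\ref{PowerReading:VertexMembership}) is immediate from the hypothesis. For (\ref{PowerReading:Intersection}), the conjugate $\alpha_{[f]}(A_{[f]})^{f_\alpha^{-1}}$ is a finite subset of $A_{[o(f)]}$, so we test membership of each of its elements in the coset $xB_{o(f)}$ using membership in $A_{[o(f)]}$. Since every $B_f$ is finite, no edge group of $\B$ is infinite, and condition (\ref{PowerReading:FiniteIndex}) holds vacuously (with the trivial identification: none of the $B_f$ are infinite).

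The condition that demanded the quasi-convexity machinery in the centralizer-extension case, condition (\ref{PowerReading:DoubleCoset}), is trivial here, and this is where one might expect an obstacle but finds none. For any edge cycle $f_1,\ldots,f_m$ and any $i$, the subgroups appearing in Property~\ref{Property:DoubleCoset}, namely $C = \omega_{e_i}(A_{e_i})^{(f_i)_\omega}$ and $C' = \alpha_{e_{i+1}}(A_{e_{i+1}})^{(f_{i+1})_\alpha^{-1}}$ (or just $C$ in case (\ref{PowerReading:DoubleCosets2})), are conjugates of images of finite edge groups, hence finite. So $C\times C'$ is itself finite, and the conclusion of Property~\ref{Property:DoubleCoset} that only finitely many pairs $(c,c')$ satisfy $cgc'\in H$ is automatic. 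With both hypotheses of Theorem~\ref{Thm:PowerCosetMembership} confirmed, the power coset membership problem in $G$ is decidable. The corollary is essentially a checklist: the genuine content sits in Theorem~\ref{Thm:PowerCosetMembership}, and the finite-edge-group hypothesis defuses each precondition individually.
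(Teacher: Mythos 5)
Your proof is correct and takes essentially the same approach as the paper: invoke Theorem~\ref{Thm:PowerCosetMembership} and observe that finiteness of every edge group trivializes both the benignness conditions and all parts of Property~\ref{Property:PowerReadingConditions}. The paper's proof is a terse three sentences (noting immediately that Property~\ref{Property:PowerReadingConditions}(\ref{PowerReading:DoubleCoset}) is trivial, that finite groups are Noetherian, and that the edge-group algorithms reduce to finitely many checks), whereas you spell out each condition explicitly, but the substance is the same.
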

\begin{proof}
Property~\ref{Property:PowerReadingConditions}(\ref{PowerReading:DoubleCoset}) is immediate.
Since edge groups are finite, the various algorithmic problems involving edge groups reduce to finitely many word problems.  Finite groups are Noetherian, 
so the graph of groups is benign.
\end{proof}


\section{Effective coherence, embedding, enumeration}\label{Section:EffectiveCoherence}
Our main goal in this section is to prove that every finitely generated group $G$ that is discriminated by a locally quasi-convex torsion-free hyperbolic 
group $\Gamma$ is effectively coherent.  We approach this via the fact that $G$ embeds into a group $G_{n}$ obtained from $\Gamma$ 
by iterated centralizer extensions.  Viewing a centralizer extension as the fundamental group of a graph of groups, we apply 
the folding algorithm to find the induced decomposition of, and hence a presentation for, 
a subgroup.  As applications, we compute the embedding of $G$ into $G_{n}$, and provide enumeration and 
recognition algorithms for groups discriminated by $\Gamma$.

An iterated centralizer extension corresponds to an iterated graph of groups construction.  Since we will need to 
prove that the graph of groups is benign, we begin in \S \ref{Subsection:IteratedBenign} by giving some general 
situations in which we can prove that an iterated 
graph of groups, with virtually cyclic edge groups, is benign.  


\subsection{Benign iterated graphs of groups}\label{Subsection:IteratedBenign}
Consider a graph of groups $\A$ in which each vertex group is itself the fundamental group of a graph of groups. 
There are many important \emph{hierarchies} of this type, with various restrictions on the graph of groups and 
with the hierarchy terminating in base groups of a specific type 
(e.g. iterated centralizer extensions, finite hierarchies \cite{DP01} \cite{LT13}, quasi-convex hierarchies \cite{Wis12}).

When can 
we prove that $\A$ is benign?  If edge groups are cyclic, then conditions (\ref{BenignNoetherian}) and (\ref{BenignEdgeMembership}) 
of the definition of benign hold and 
(\ref{BenignCoset}) and (\ref{BenignGenSet}) 
reduce to instances of the power coset membership problem in vertex groups.  We will use Theorem \ref{Thm:PowerCosetMembership} 
to show that decidability of the power coset membership problem propagates up the hierarchy, but to do so we will need to prove, independently, 
that Property \ref{Property:DoubleCoset} holds at certain vertex groups.

For a vertex $u$, we say that Property~\ref{Property:DoubleCoset} \emph{holds at $u$ with respect to edge groups} if for every 
edge $e$ with $t(e)=u$ and every $a\in A_{u}$, $A_{u}$ satisfies Property~\ref{Property:DoubleCoset} with respect to $\omega_{e}(A_{e})^{a}$, 
and for every edge $e'\neq e^{-1}$ with $u=o(e')$ and every $a'\in A_{u}$, $A_{u}$ satisfies 
Property~\ref{Property:DoubleCoset} with respect to $(\omega_{e}(A_{u})^{a}, \alpha_{e'}(A_{u})^{a'})$.  Note that $e'$ and $e$ may 
coincide if $\A$ has loops.

\begin{theorem}\label{Thm:PowerCosetCombination}
Let $G$ be the fundamental group of a graph of groups $\A$ in which
\begin{enumerate}[(i)]
\item every edge group is virtually cyclic, \label{VirtuallyCyclic}
\item every vertex group has decidable power coset membership problem, and \label{DecidablePCM}
\item for every edge of $\A$, at least one of its adjacent vertex groups satisfies Property~\ref{Property:DoubleCoset} with respect to 
edge groups. \label{WithRespectToEdgeGroups}
\end{enumerate}
Then $\A$ is benign and $G$ has decidable power coset membership problem.
\end{theorem}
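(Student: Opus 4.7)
The plan is to verify the hypotheses of Theorem~\ref{Thm:PowerCosetMembership}. Decidable power coset membership in vertex groups is given, so what remains is to show that $\A$ is benign and that, for every folded $\A$-graph $\B$, the pair $(\A,\B)$ satisfies Property~\ref{Property:PowerReadingConditions}.

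For benign-ness, Definition~\ref{Def:Benign}(\ref{BenignNoetherian}) and (\ref{BenignEdgeMembership}) are standard facts about virtually cyclic groups. For (\ref{BenignCoset}) and (\ref{BenignGenSet}) I would first compute a coset decomposition $\alpha_{e}(A_{e})=\bigsqcup_{i=1}^{k} s_{i}\langle t\rangle$ relative to a maximal infinite cyclic subgroup $\langle t\rangle$ (the case of finite $A_{e}$ is handled directly). Given $X\subset A_{v}$ and $a\in A_{v}$, the intersection $\langle X\rangle\cap a\alpha_{e}(A_{e})$ is non-empty iff some coset $(as_{i})\langle t\rangle$ meets $\langle X\rangle$; this is a finite list of PCM queries in $A_{v}$, together with membership checks covering the $m=0$ case, and when one succeeds a bounded search recovers an explicit witness. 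For (\ref{BenignGenSet}) (the case $a=1$), a generating set of $\langle X\rangle\cap \alpha_{e}(A_{e})$ consists of one witness in each nonempty $\langle X\rangle\cap s_{i}\langle t\rangle$ together with the minimal positive power $t^{n}\in\langle X\rangle$, which is located by a PCM query followed by a bounded membership search.

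For Property~\ref{Property:PowerReadingConditions}, part (i) is immediate from PCM, part (ii) reduces to benign condition (\ref{BenignCoset}) after conjugating by $f_{\alpha}$ (using that $B_{o(f)}$ is equipped with explicit generators by the folding algorithm of Theorem~\ref{Thm:Folding}), and part (iii) follows because an infinite subgroup of a virtually cyclic group has finite index, while infiniteness of a finitely generated subgroup of a virtually cyclic group is decidable. The main step is (iv). Given an edge cycle $f_{1},\ldots,f_{m}$ in $\B$ with every $B_{f_{i}}$ finite, set $e_{i}=[f_{i}]$ and note that $[t(f_{i})]=[o(f_{i+1})]$ in $\A$ (indices mod $m$), so each edge $e_{i}$ of $\A$ has its two endpoints among $\{[t(f_{j})]:j\in\{i-1,i\}\}$. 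Hypothesis (iii) of the theorem supplies, for each such edge $e_{i}$, an adjacent vertex at which Property~\ref{Property:DoubleCoset} holds with respect to edge groups; this vertex is $[t(f_{j})]$ for some $j\in\{i-1,i\}$. The ``with respect to edge groups'' formulation at $[t(f_{j})]$ is universal over edges incident at that vertex, hence it yields precisely the pair of conjugated edge-group images (or the single subgroup, in the case $e_{j+1}=e_{j}^{-1}$) demanded by Property~\ref{Property:PowerReadingConditions}(\ref{PowerReading:DoubleCoset}) at index $j$.

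I expect the main obstacle to lie in the bookkeeping of the benign verification: the coset decomposition of each virtually cyclic edge group must be invoked uniformly to reduce several distinct intersection problems to finitely many PCM and membership queries in the adjacent vertex group, and one must carefully account for the fact that the PCM specification excludes $m=0$ so that membership queries are sometimes needed separately. The cycle argument for (iv), by contrast, is conceptually clean once the dictionary between $\B$-cycles and their images as edge paths in $\A$ is laid out. Once all hypotheses have been verified, Theorem~\ref{Thm:PowerCosetMembership} delivers decidable PCM in $G=\piA$, and $\A$ has been shown to be benign along the way.
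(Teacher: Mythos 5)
Your proposal is correct and follows essentially the same approach as the paper: verify the benign conditions using an infinite cyclic finite-index subgroup of each edge group plus PCM in the vertex groups, then verify Property~\ref{Property:PowerReadingConditions} (with the cycle argument for part (iv) relying on hypothesis (iii) supplying a suitable adjacent vertex) and invoke Theorem~\ref{Thm:PowerCosetMembership}. The only differences are cosmetic bookkeeping choices — e.g.\ you take one witness per coset for the generating set in (\ref{BenignGenSet}) where the paper enumerates all $b_i c^l$ with $0\le l<k$, and you flag the $m=0$ subtlety of PCM explicitly — but the substance and structure of the argument match.
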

\begin{proof}
First, we check that $\A$ is benign. The membership problem is decidable in virtually cyclic groups 
(it follows, for example, from Lemma \ref{Lem:GammaPowerCoset} below since such groups are locally quasi-convex hyperbolic), 
so (\ref{BenignEdgeMembership}) holds.  
For (\ref{BenignNoetherian}), let $E$ be virtually cyclic with finite-index cyclic subgroup $\langle c\rangle$.  Note that 
every infinite subgroup of $E$ intersects $\langle c\rangle$ non-trivially, and that the torsion subgroup of $E$ is finite.  
If $E$ is not Noetherian, there exists an infinite proper chain of subgroups 
$H_{1}\lneq H_{2}\lneq\ldots$ of $E$.  Intersecting with $\langle c\rangle$ produces an infinite chain  
$H_{1}\cap \langle c\rangle \leq H_{2}\cap \langle c\rangle\ldots$ of subgroups of $\langle c\rangle$ which is 
proper at infinitely many steps, a contradiction.

For conditions (\ref{BenignCoset}) and (\ref{BenignGenSet}), let $X$ be a finite subset of a vertex group $A_{v}$, 
$a$ an element of $A_{v}$, and $E$ the image of an edge group in $A_{v}$.
Let $c$ be an element of infinite order in 
$E$ (such an element may be found since there is a computable bound on the order of finite elements in a hyperbolic group, 
see \cite{Bra00}).  The subgroup $\langle c\rangle$ must have finite index in $E$, and we may compute set of coset 
representatives $b_{1},\ldots, b_{m}$ (the membership problem is decidable, so we may construct the Schrier graph).

To find a generating set for $\langle X\rangle \cap E$, first find $k$ such that 
$\langle X\rangle \cap \langle c\rangle =\langle c^{k}\rangle$. 
Next, find all elements of the form $b_{i}c^{l}$, where $i\in \{1,\ldots,m\}$ and $0\leq l <k$, 
such that $b_{i}c^{l}\in \langle X\rangle$. 
These elements, together with $c^{k}$, form a generating set for $\langle X\rangle \cap E$.  To check if  
$\langle X\rangle \cap aE$ is non-empty, it suffices to check if $\langle X\rangle \cap (ab_{i})\langle c\rangle$ is non-empty 
for some $i\in\{1,\ldots,m\}$, and this may be decided using power coset membership in $A_{v}$. 

Decidability of the power coset membership problem in $G$ is a corollary of Theorem \ref{Thm:PowerCosetMembership}.  Indeed, 
for any pair $(\A,\B)$ with $\B$ folded, Property \ref{Property:PowerReadingConditions}(\ref{PowerReading:VertexMembership}) holds 
by assumption, Property \ref{Property:PowerReadingConditions}(\ref{PowerReading:Intersection}) holds since $\A$ is benign, and 
Property \ref{Property:PowerReadingConditions}(\ref{PowerReading:FiniteIndex}) holds since edge groups are virtually cyclic.
For Property \ref{Property:PowerReadingConditions}(\ref{PowerReading:DoubleCoset}), consider any edge cycle $f_{1},\ldots,f_{s}$ 
of $\B$ (the assumption that all $B_{f_{i}}$ are finite will not be needed). If $s=1$ then $e_{1}$ is a loop hence 
Property \ref{Property:DoubleCoset} holds at $o(e_{1})$ by assumption and case (1) of 
Property \ref{Property:PowerReadingConditions}(\ref{PowerReading:DoubleCoset}) is satisfied. 
If $s>1$, consider the edge $f_{2}$.  Either Property \ref{Property:DoubleCoset} holds at
$o(e_{2})$ so Property \ref{Property:PowerReadingConditions}(\ref{PowerReading:DoubleCoset}) 
is satisfied for $i=1$, or Property \ref{Property:DoubleCoset} 
holds at $t(e_{2})$ so Property \ref{Property:PowerReadingConditions}(\ref{PowerReading:DoubleCoset}) is satisfied for $i=2$.

\end{proof}

This theorem requires that Property \ref{Property:DoubleCoset} holds for some vertex groups, but we do not have 
a method to prove this property based purely on graph of groups considerations.  Instead, we show that 
certain locally quasi-convex relatively hyperbolic groups satisfy the property. 

There are several \emph{combination theorems} \cite{Dah03} that give conditions under which  
the fundamental group $G$ of a graph of groups with (relatively) hyperbolic vertex groups is itself relatively hyperbolic, 
and a recent theorem of Bigdely and Wise \cite{BW13} shows that local quasi-convexity of the vertex  groups often implies  
local relative quasi-convexity of $G$.  
Therefore relatively hyperbolic groups that are locally relatively quasi-convex arise naturally in graph of groups constructions. 
 
We recall the definition of local quasi-convexity.
Let $G$ be a group generated by a finite set $X$.  Let $\mathcal{X}=\mathrm{Cay}(G, X)$ be the Cayley graph of $G$ with respect to $X$, 
and $d_{X}$ the associated metric.  
For a path $\gamma$ in $\mathcal{X}$ we 
denote by $|p|$ the length of $p$ and by $o(\gamma)$ and $t(\gamma)$ the initial and terminal vertices of $\gamma$, respectively. 
A path $p$ is a 
\emph{$(\lambda, c)$-quasi-geodesic} if for every subpath $q$ of $p$, $|q|\leq \lambda d_{X}(o(q), t(q))+c$. 

A subgroup $R\leq G$ is called \emph{quasi-convex} if there exists a constant $\epsilon\geq 0$ (`quasi-convexity constant') 
such that the following holds: 
for every pair of elements $r_{1}, r_{2}\in R$ and every geodesic 
$\gamma$ with $o(\gamma)=r_{1}$ and $t(\gamma)=r_{2}$, every vertex of $\gamma$ is within distance $\epsilon$ of a 
vertex belonging to $R$. 
Though quasi-convexity of a subgroup may depend on the choice of generating set $X$, when $G$ is hyperbolic it is independent of the choice 
of \emph{finite} generating set (\cite{CDP90} Prop. 10.4.1).

If every finitely generated subgroup of $G$ is quasi-convex then $G$ is called \emph{locally quasi-convex}. 
Examples of locally quasi-convex groups include free groups, (most) surface groups, 
and a variety of small-cancellation, Coxeter, and one-relator groups recently studied by Mart\'{i}nez-Pedroza, McCammond, and Wise
(\cite{MW05}, \cite{MW08}, \cite{MW11Local}).  We will use these groups as vertex groups in Theorem \ref{Thm:PowerCosetCombination}, so 
decidability of the poset coset membership problem is essential. 

\begin{lemma}\label{Lem:GammaPowerCoset}
Every locally quasi-convex hyperbolic group $\Gamma$ has decidable power coset membership problem.
\end{lemma}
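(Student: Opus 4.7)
The plan is to reduce the power coset membership problem in $\Gamma$ to a combination of three algorithmic problems for quasi-convex subgroups of a hyperbolic group, each of which is well-known to be decidable: the membership problem, the effective computation of intersections, and the coset intersection problem. Local quasi-convexity of $\Gamma$ ensures that the given finitely generated subgroup $H$ is quasi-convex with a computable quasi-convexity constant; when $g$ has infinite order, $\langle g\rangle$ is itself quasi-convex in $\Gamma$ by a standard fact about cyclic subgroups of hyperbolic groups generated by infinite-order elements.

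First I would compute the order of $g$, which is possible because orders of torsion elements in $\Gamma$ are bounded by a computable constant. If $g$ has finite order $n$, the problem reduces immediately to checking whether $x^{-1}g^{i}\in H$ for each $i=1,\ldots,n-1$, which is a finite list of instances of the membership problem in the quasi-convex subgroup $H$. Assume from now on that $g$ has infinite order, so that $\langle g\rangle\cong\integers$ is quasi-convex. I would then decide $x\in H$ using the membership algorithm for $H$ and branch on the result. If $x\in H$ then $xH=H$, and the question becomes whether $H\cap\langle g\rangle$ is non-trivial; applying the effective intersection algorithm for quasi-convex subgroups of a hyperbolic group produces a generator $g^{k}$ of this cyclic intersection, and the answer is `Yes' iff $k\neq 0$. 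If instead $x\notin H$, then $g^{0}=1\notin xH$, so any witness $g^{m}\in xH$ automatically has $m\neq 0$, and the problem reduces exactly to deciding whether $xH\cap\langle g\rangle\neq\emptyset$, i.e.\ to the coset intersection problem for the quasi-convex pair $(H,\langle g\rangle)$.

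The combinatorial case analysis above is elementary; the main obstacle is to invoke the appropriate external results on quasi-convex subgroups of hyperbolic groups, namely the decidability of the membership problem, the effective computation of intersections of finitely generated quasi-convex subgroups, and the decidability of the coset (equivalently, double coset) intersection problem. All three are standard in the literature on hyperbolic groups, and the proof of the lemma amounts to carefully stringing these ingredients together along the branches of the case analysis.
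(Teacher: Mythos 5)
Your reduction is logically sound, and the case analysis (finite order, infinite order with $x\in H$, infinite order with $x\notin H$) correctly accounts for the excluded exponent $m=0$. But the route is genuinely different from the paper's. The paper does not invoke coset intersection as a black box: it proves the needed instance directly, by observing that the path labelled $x^{-1}g^m$ is a uniform quasi-geodesic whose vertices all lie within a computable distance $K+\epsilon$ of $H$, and then uses a pigeonhole/splicing argument (if $y_i = y_j$ for two offset elements, the segment between $g^i$ and $g^j$ can be excised to produce a witness with smaller exponent) to bound the exponent that need be checked. Your proposal instead delegates the hard case $x\notin H$ to the decidability of the coset intersection problem $xH\cap\langle g\rangle\neq\emptyset$ for quasi-convex subgroups of a hyperbolic group, and the case $x\in H$ to the effective intersection problem. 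Both of those statements are true and now citable (e.g.\ Kharlampovich, Miasnikov, and Weil, \emph{Stallings graphs for quasi-convex subgroups}, J.~Algebra 2017), but calling them ``standard'' and leaving them uncited is the weak point of the proposal: the coset intersection result is considerably less elementary than the membership problem, and its proof embeds a quasi-geodesic fellow-travelling plus pigeonhole argument of essentially the same flavour and difficulty as the one the paper carries out directly. So your argument is correct but does not save work; it trades a self-contained two-page argument for a reliance on a nontrivial external theorem whose proof contains the same ideas. If you intend to use this version, you should state the coset intersection and effective intersection results precisely, with explicit references, and note that computable quasi-convexity constants for $H$ and $\langle g\rangle$ are needed as input to those algorithms (for $\langle g\rangle$ this follows from the standard fact that $\langle g\rangle$ is a uniform quasi-geodesic when $g$ has infinite order, and for $H$ from \cite{Kap96}).
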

\begin{proof}
Let $H\leq \Gamma$ be finitely generated, 
and let $x,g\in \Gamma$. 
Compute a hyperbolicity constant $\delta$ for $\Gamma$ and a quasi-convexity constant $\epsilon$ for $H$ (using 
\cite{Ol91} or \cite{Pap96} and \cite{Kap96} Prop.~4).  First, determine if $g$ has finite order.  
Since the order of any finite-order element of $\Gamma$ is bounded by $s^{2\delta +1}+1$, where $s$ is the cardinality of the given 
generating set of $\Gamma$ (see \cite{Bra00}), it suffices to 
check if $g^{i}$ is trivial for some $0<i\leq s^{2\delta +1}+1$.  If $g$ has finite order $k$, check if $g^{i}\in xH$ for any 
$0<i<k$, using the fact that membership in a quasi-convex subgroup of a hyperbolic group is decidable (\cite{Gru99Angle} Prop.~6.1).
Assume now that $g$ has infinite order.
 
Suppose that $g^{m}\in xH$ for some $m>0$.  In the Cayley graph of $\Gamma$, 
the path labelled by $g^{m}$ is a $(\lambda, c)$-quasi-geodesic, for some $\lambda$ and $c$ depending on $\delta$ and $|g|$ but not on $m$ 
(\cite{Ols93} Lem.~1.11). Consequently, the path $q$ labelled by $x^{-1}g^{m}$ is a $(\lambda, c')$-quasi-geodesic, where 
$c'=c+(\lambda+1)|x|$.  
Let $p$ be a geodesic path from 1 to $x^{-1}g^{m}$.  
There exists a computable constant $K=K(\delta, \lambda, c')$ such that every vertex of $q$ is 
within distance $K$ of $p$ (\cite{Ols93} Lem.~1.9).  Since $H$ is 
$\epsilon$-quasi-convex, every vertex of $p$ lies within $\epsilon$ of $H$.  Hence every vertex of $q$ lies within distance $K+\epsilon$ of $H$.
In particular, there exist elements $y_{0}, y_{1},\ldots, y_{m}$ in the ball $B$ of radius $K+\epsilon$ centered at 1 such that 
\[
x^{-1}g^{i}y_{i}\in H,
\]
for $i=0,\ldots,m$ (see Figure \ref{Fig:LocallyQC}).

\begin{figure}[h]
\begin{center}
\includegraphics[scale=0.9]{./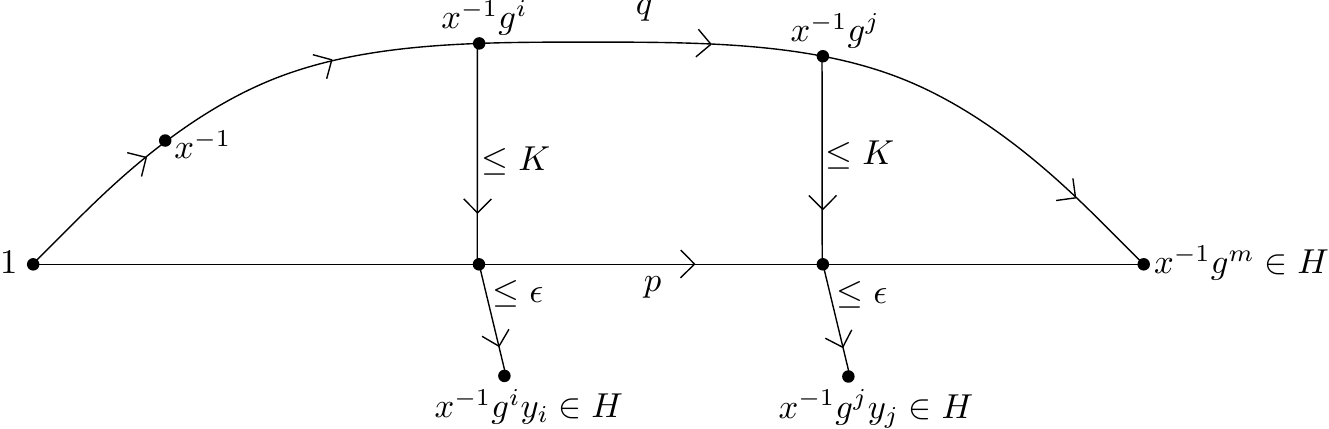}
\caption{vertices of $q$ are $(K+\epsilon)$-close to $H$}
\label{Fig:LocallyQC}
\end{center}
\end{figure}

Let $N=|B|$.  We claim that there exists $0<n\leq N$  
such that $x^{-1}g^{n}\in H$. If $m\leq N$, the claim holds, so assume $m>N$. 
Then there exist $0\leq i < j < m$ such that $y_{i}=y_{j}$.  

Let $h_{1}=x^{-1}g^{i}y_{i}$, $h_{2}= (y_{i}^{-1}g^{-i}x)x^{-1}g^{j}y_{i}$, 
and $h_{3}= (y_{i}^{-1}g^{-j}x) x^{-1}g^{m}$, which are all elements of $H$. 
Then 
\begin{eqnarray*}
h_{1} h_{3} & = & h_{1} y_{i}^{-1} g^{m-j} = h_{1} (y_{i}^{-1} g^{j-i} y_{i})(y_{i}^{-1} g^{m-j})g^{-(j-i)} \\
	& = & h_{1} h_{2} h_{3} g^{-(j-i)} = x^{-1}g^{m-(j-i)}
\end{eqnarray*}
so $x^{-1}g^{m-(j-i)}\in H$. If $m-(j-i)\leq N$, the claim holds, and otherwise we repeat the above argument with $m'=m-(j-i)$ in place of $m$ 
(note that $m'>0$). This proves 
the claim.

If $g^{m}\in xH$ for some $m<0$, a similar argument shows that $x^{-1}g^{n}\in H$ for some $-N\leq n<0$.  We conclude that 
if there exists $m\neq 0$ such that $g^{m}\in xH$, then there exists $n\neq 0$ in the interval $[-N,N]$ such that $g^{n}\in xH$.  
Then to solve the power coset membership problem, 
it suffices to check if any of the elements $x^{-1}g^{i}$ is in $H$, for $i\in[-N,N]\setminus\{0\}$. 
\end{proof}

We now review some necessary aspects of the geometry of relatively hyperbolic groups from \cite{Osi06Relatively}.
Let $G$ be hyperbolic relative to a collection $\mathcal{H}=\{H_{1},\ldots,H_{m}\}$ of subgroups. 
Subgroups of $G$ that are conjugate into subgroups in $\mathcal{H}$ are called \emph{parabolic}.
Denote $\hat{\mathcal{H}} = \sqcup_{s=1}^{m} H_{s}\setminus\{1\}$ let 
$\hat{\mathcal{X}}=\mathrm{Cay}(G, X\cup \hat{\mathcal{H}})$.
When constructing $\hat{\mathcal{X}}$ we remove from $X$ any parabolic elements.

For a path $p$ in $\hat{\mathcal{X}}$, a maximal subpath 
consisting of edges from $H_{i}$ is called an \emph{$H_{i}$-component}. 
Every vertex that does not lie in the interior of some $H_{i}$-component is called a \emph{phase vertex}.
Two $H_{i}$-components $p_{1}$ and $p_{2}$ are \emph{connected} if 
there is an edge from a vertex of $p_{1}$ to a vertex of $p_{2}$ labelled by an element of $H_{i}$.  An $H_{i}$-component is 
\emph{isolated} if it is not connected to any other $H_{i}$-component.

A subgroup $R$ of $G$ is called \emph{relatively quasi-convex} 
if there exists a constant $\epsilon\geq 0$ such that 
the following condition holds: for every two elements $r_{1}, r_{2}\in R$, every geodesic path $\gamma$ in 
$\hat{\mathcal{X}}$ with $o(\gamma)=r_{1}$ and $t(\gamma)=r_{2}$, and every vertex $v$ of $\gamma$, 
there exists a vertex $w\in R$ such that 
\[
d_{X} (v, w) \leq \epsilon.
\]
Note that this distance is in $\mathcal{X}$, not $\mathcal{\hat{X}}$.
As with quasi-convexity, relative quasi-convexity (in a relatively hyperbolic group) does not depend on the finite generating set $X$ (\cite{Osi06Relatively} Prop. 4.10). If every finitely generated subgroup of $G$ is relatively quasi-convex then $G$ is 
called \emph{locally relatively quasi-convex}.

\begin{lemma}\label{Lem:DoubleCoset}
Let $G$ be relatively hyperbolic and locally relatively quasi-convex.  Then for every hyperbolic element $c\in G$ 
of infinite order, $G$ satisfies Property~\ref{Property:DoubleCoset} with respect to the maximal elementary subgroup $E$ 
containing $c$.

\end{lemma}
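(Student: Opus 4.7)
The approach will be to combine the \emph{almost-malnormality} of the maximal elementary subgroup $E$ with relative quasi-convexity of $H$, $E$, and $E^x$. Both properties are standard for the maximal elementary subgroup of a loxodromic element in a relatively hyperbolic group: $E$ is two-ended, relatively quasi-convex, and satisfies $|E \cap E^y| < \infty$ for every $y \in G \setminus E$ (Osin). The hypothesis $gx^{-1} \notin E$ in Property~\ref{Property:DoubleCoset}(i) is precisely what enables almost-malnormality to be applied to the conjugates appearing below.

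The first step will be to reduce the counting of pairs in $E \times E^x$ to the counting of elements of $H \cap EgE^x$. Fix any solution $(e_0, e_0')$ with $e_0 g e_0' \in H$ (otherwise the statement is vacuous). A direct computation shows that another pair $(e, e') \in E \times E^x$ satisfies $ege' = e_0 g e_0'$ if and only if $e_0^{-1} e = g\, e_0' (e')^{-1} g^{-1} \in E \cap E^{xg^{-1}}$. Since $gx^{-1} \notin E$ is equivalent to $xg^{-1} \notin E$, almost-malnormality gives $|E \cap E^{xg^{-1}}| < \infty$, so each $h \in H \cap EgE^x$ admits only finitely many decompositions as $ege'$. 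It therefore suffices to show $|H \cap EgE^x| < \infty$.

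Arguing by contradiction, suppose there is an infinite sequence $h_n = e_n g e_n'$ of distinct elements of $H$. A pigeonhole step handles the case when one coordinate is constant along an infinite subsequence: if $e_n = e$ for all $n$ in an infinite index set, then $h_n^{-1} h_m = (e_n')^{-1} e_m' \in H \cap E^x$ for all such $n,m$, forcing $H \cap E^x$ to be infinite, a contradiction (and symmetrically if $e_n'$ is constant). So I may assume both $\{e_n\}$ and $\{e_n'\}$ leave every finite subset of $E$ and $E^x$ respectively. For the remaining case, I plan to consider the broken paths $p_n$ from $1$ to $h_n$ in the (relative) Cayley graph of $G$, obtained by concatenating a geodesic in $E$ from $1$ to $e_n$, a single edge labelled $g$, and a geodesic in the coset $e_n g E^x$ from $e_n g$ to $h_n$. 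Each piece lies in a relatively quasi-convex set, and the two corners at $e_n$ and $e_n g$ cannot be short-cut because the ``angle'' there is controlled by the finiteness of $E \cap E^{xg^{-1}}$; this promotes $p_n$ to a uniform unparametrized quasi-geodesic. Relative quasi-convexity of $H$ (which follows from local relative quasi-convexity of $G$ and finite generation of $H$) then forces $p_n$ to lie within a bounded $\mathcal{X}$-neighborhood of $H$, and in particular the vertex $e_n$ is within bounded distance of $H$. Hence $e_n$ lies in a fixed finite union of cosets of $H \cap E$, a finite set by hypothesis, contradicting the divergence of $\{e_n\}$.

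The main obstacle is the geometric step of showing that the broken paths $p_n$ are uniform quasi-geodesics. The required ingredient is a bounded-intersection-type property for relatively quasi-convex almost-malnormal subgroups (a relative analogue of the Gitik--Mitra--Rips--Sageev lemma, available in work of Mart\'inez-Pedroza and Hruska), ensuring that translates of $E$ and translates of $E^x$ fellow-travel only on a uniformly bounded segment. Once this no-backtracking at the two corners is in place, the remainder of the argument proceeds by standard relatively hyperbolic geometry and quasi-convexity of $H$.
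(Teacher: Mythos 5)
Your approach is essentially the same as the paper's in broad strokes: use almost-malnormality of $E$ together with relative quasi-convexity of the subgroup (called $K$ in the paper), and a broken-geodesic argument in the relative Cayley graph to derive a contradiction from infinitely many solutions. Your initial algebraic reduction is a clean variant: the paper fixes a transversal $A$ of the finite-index cyclic $C=\langle c\rangle$ in $E$, parametrizes solutions by quadruples $(a_1,a_2,n,l)$ and pigeonholes to force both exponents to vary, whereas you reduce to bounding $|H\cap EgE^x|$ and verify separately that each such element has only finitely many factorizations. Both routes are correct and reach the same point: an infinite family of elements $a_1c^{n_i}ga_2^x(c^x)^{k_i}\in K$ with $\{n_i\}$ and $\{k_i\}$ both unbounded.

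The gap is the geometric step, and it is concentrated in two places. First, your path decomposition is not quite set up to work in $\hat{\mathcal{X}}$ (the Cayley graph with $E$ added to the parabolic collection, which is legitimate since $E$ is hyperbolically embedded). The piece labelled ``a geodesic in the coset $e_ngE^x$'' does \emph{not} correspond to a single coned-off edge, since $E^x=x^{-1}Ex$ is a conjugate of a parabolic, not a parabolic itself; you have to unwind it through $x^{-1}$, an $E$-edge, and $x$. The paper's decomposition $p_i=e^{(1)}_i\,\hat{p}_{gx^{-1}}\,e^{(2)}_i\,\hat{p}_x$ is engineered precisely so that the two $E$-edges are separated by geodesics that do not begin or end with $E$-edges, and so that the total $\hat{\mathcal{X}}$-length of $p_i$ is bounded by $2+2|x|+|g|$ independently of $i$. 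That boundedness is what makes $p_i$ a $(1,\gamma)$-quasi-geodesic for free; you frame the quasi-geodesic property as the hard step, but it is trivial once the path is built this way. Second, what actually requires work is the no-backtracking claim (that every parabolic component of $p_i$ is isolated), which is the hypothesis needed to invoke Osin's Proposition 3.15 comparing $p_i$ to a genuine $\hat{\mathcal{X}}$-geodesic $q_i$. The paper proves this directly, case by case, using almost-malnormality of $E$ (and finiteness of intersections of distinct parabolics) together with geodesicity of the pieces, and only for infinitely many $i$. Your appeal to an ``angle'' at the corners and to a Gitik--Mitra--Rips--Sageev-type bounded-packing lemma gestures in the right direction but does not establish this; in particular, relative quasi-convexity of $K$ only controls $\hat{\mathcal{X}}$-geodesics between points of $K$, so you cannot conclude that $p_i$ stays near $K$ until after the no-backtracking plus Prop.~3.15 step has been carried out. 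Filling this in would make your proof match the paper's.
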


\begin{proof}
Let $s$ be the index of $C=\langle c\rangle$ in $E$ and let $A$ be a set of coset representatives 
of $C$ in $E$.
Let $g,x\in G$ with $gx^{-1}\not\in E$ and let $K\leq G$ be finitely generated such that 
$K\cap E$ and $K\cap E^{x}$ are both finite.  Note that $C^{x}$ has index $s$ in $E^{x}$ and that $A^{x}$  
is a set of coset representatives $C^{x}$ in $E^{x}$. 

Let $I$ denote the set of all quadruples $(a_{1},a_{2},n,l)$ with $a_{1},a_{2} \in A$ 
and $n,l\in \integers$ such that  
\begin{equation}
a_{1} c^{n} g a_{2}^{x}(c^{x})^{l} \in K.
\end{equation} 
To prove Property~\ref{Property:DoubleCoset} we must show that $I$ is finite.  Assume for 
contradiction that $I$ is infinite.

First, suppose that there are more than $s^{2}$ elements of $I$ that have identical third components.  
Then there exists $(a_{1},a_{2},n,l)$ and $(a_{1},a_{2},n,k)$ in $I$ with $l\neq k$.
Then   
\[
(a_{1} c^{n} g a_{2}^{x}(c^{x})^{l})^{-1}(a_{1} c^{n} g a_{2}^{x}(c^{x})^{k}) = (c^{x})^{k-l}\in K,
\]
hence $\langle (c^{x})^{k-l}\rangle \leq K$ contradicting the fact that $K\cap E^{x}$ is finite.  
Similarly, if there are more than $s^{2}$ elements of $I$ having identical fourth components, one obtains a contradiction 
to the fact that $K\cap E$ is finite.
It follows then that there exists $a_{1},a_{2}\in A$ and 
an infinite subset $J\subset I$ indexed by $\naturals$ consisting of elements 
$(a_{1},a_{2},n_{i},k_{i})$ where $n_{i}\neq n_{j}$ and $k_{i}\neq k_{j}$ for all $i\neq j$.



Let $G$ be hyperbolic relative to $\mathcal{H}$.  Then $G$ is also hyperbolic relative to 
$\mathcal{H}'=\mathcal{H}\cup \{E\}$, since by \cite{Osi06Elementary} Corollary 1.7 the maximal elementary 
subgroup containing a hyperbolic element is always hyperbolically embedded (if a conjugate of $E$ is already included we simply 
replace it by $E$).  Let 
$\hat{\mathcal{X}}=\mathrm{Cay}(G, \hat{\mathcal{H}'})$.
Edges of $\hat{\mathcal{X}}$ labelled by elements of $E$ will be called $E$-edges.

Let $p_{gx^{-1}}$ be a geodesic path in $\hat{\mathcal{X}}$ representing the element $gx^{-1}$. Write $p_{gx^{-1}}$ in the form 
\[
p_{gx^{-1}}=e \hat{p}_{gx^{-1}} e'
\]
where $e$ and $e'$ are $E$-edges labelled by elements $b,b'\in E$ and $\hat{p}_{gx^{-1}}$ neither begins nor ends with an $E$-edge.  
Since $gx^{-1}\not\in E$, $\hat{p}_{gx^{-1}}$ is non-empty.  
Since $p_{gx^{-1}}$ is a geodesic, every $E$-component of $p_{gx^{-1}}$ is isolated and consists of a 
single edge.  
Let $p_{x}$ be a geodesic path for $x$ and write $p_{x}$ in the form 
\[
p_{x} = e'' \hat{p}_{x}
\]
where $e''$ is an $E$-edge labelled by $b''\in E$ and $\hat{p}_{x}$ does not begin with an $E$-edge.  
Every $E$-component of $p_{x}$ is isolated and consists of a single edge.  

For every $i\in \naturals$, let $p_{i}$ be the path from 1 to $a_{1} c^{n_{i}} gx^{-1}a_{2} c^{k_{i}}x$ defined by 
\[
p_{i} = e^{(1)}_{i} \hat{p}_{gx^{-1}} e^{(2)}_{i} \hat{p}_{x},
\]
where $e^{(1)}_{i}$ is the $E$-edge labelled by $a_{1}c^{n_{i}}b$ and $e^{(2)}_{i}$ is the $E$-edge labelled by $b' a_{2}c^{k_{i}}b''$.
The path $p_{i}$ is shown in Figure \ref{Fig:RelativelyQC}.

\begin{claim}
For infinitely many $i$, $p_{i}$ is a path without backtracking, i.e. for all $H\in \mathcal{H}$, 
every $H$-component of $p_{i}$ is isolated.  
\end{claim}
\begin{proof}
First we show that every $E$-component is isolated.  Since all $n_{i}$ are distinct, as are all $k_{i}$, there are 
infinitely many $i$ such that $a_{1}c^{n_{i}}b\neq 1$ and $b'a_{2}c^{k_{i}}b''\neq 1$ so we may assume these two elements 
are non-trivial.  
Hence the $E$-components of $p_{i}$ are precisely the edges $e^{(1)}_{i}$ and $e^{(2)}_{i}$ and 
the $E$-edges appearing in $\hat{p}_{gx^{-1}}$ and in $\hat{p}_{x}$.  

Assume for contradiction that $p_{i}$ has a non-isolated 
$E$-component.  The component $e^{(1)}_{i}$ is not connected to an $E$-component 
of $\hat{p}_{gx^{-1}}$, as this would imply that either $\hat{p}_{gx^{-1}}$ is not a geodesic or that $\hat{p}_{gx^{-1}}$ begins with an $E$-edge. 
Similarly, $e^{(2)}_{i}$ is neither connected to an $E$-component of 
$\hat{p}_{gx^{-1}}$ nor to an $E$-component of $\hat{p}_{x}$.  
If $e^{(1)}_{i}$ and $e^{(2)}_{i}$ are connected, it implies that $\hat{p}_{gx^{-1}}$ represents 
an element of $E$, hence $gx^{-1}\in E$, which is false.  

The only remaining possibility is that $\hat{p}_{x}$ has an $E$-component which is connected to either $e^{(1)}_{i}$ or 
to an $E$-component of $\hat{p}_{gx^{-1}}$. We claim that this occurs for only finitely many $i$.  Assume otherwise. Then for infinitely many $i$, 
there exists $1\leq l_{i}\leq |\hat{p}_{gx^{-1}}|$ and $2\leq m_{i}\leq |\hat{p}_{x}|+1$ such that the 
vertex number $l_{i}$ of 
$\hat{p}_{gx^{-1}}$ (where vertex number 1 is the first vertex from the left) is 
connected via an $E$-edge to vertex number $m_{i}$ of $\hat{p}_{x}$. Since $\hat{p}_{gx^{-1}}$ and $\hat{p}_{x}$ are finite, 
there exist $i\neq j$ such that $l_{i}=l_{j}$ and $m_{i}=m_{j}$.  It follows that 
\begin{equation}\label{Eqn:InC}
g_{1} (b'a_{2}c^{k_{i}}b'') g_{2}\in E,\;\;\; g_{1} (b' a_{2}c^{k_{j}}b'') g_{2}\in E
\end{equation}
where $g_{1}, g_{2}\in G$ are the elements represented by the length $|\hat{p}_{gx^{-1}}|-l_{i}+1$ terminal segment of $\hat{p}_{gx^{-1}}$  
the length $m_{i}-1$ initial segment of $\hat{p}_{x}$ (respectively).  Taking the difference of the elements appearing in (\ref{Eqn:InC}), we conclude 
that 
\[
(c^{k_{j}-k_{i}})^{b'' g_{2}}\in E.
\]
Since $E$ is hyperbolically embedded in $G$, it almost malnormal by Theorem 1.5(3) of \cite{Osi06Elementary}.  That is, 
$E\cap E^{h}$ is finite for all $h\in G\setminus E$.
Since $\langle c^{k_{j}-k_{i}}\rangle^{b''g_{2}}$ is an infinite subgroup of $E\cap E^{b'' g_{2}}$, 
the conjugator $b''  g_{2}$ must lie in $E$, which implies $g_{2}\in E$.
However, $g_{2}$ corresponds to the length $m_{i}-1$ initial segment of $\hat{p}_{x}$.  
This segment cannot be length 1, as $\hat{p}_{x}$ does not begin with a $E$-edge, 
nor can it be length greater than 1, as $\hat{p}_{x}$ is a geodesic.  It must therefore be length 0, contradicting $m_{i}\geq 2$.

Now we check that for each $H\in \mathcal{H}'$, where $H\neq E$, every $H$-component in $p_{i}$ is isolated for all but finitely 
many $i$. Assume otherwise.
The $H$-components may only occur in $\hat{p}_{gx^{-1}}$ and $\hat{p}_{x}$.  Since each of these paths 
is a geodesic, no two $H$-components of $\hat{p}_{gx^{-1}}$ 
are connected, and similarly for $\hat{p}_{x}$. Hence an $H$-component of $\hat{p}_{gx^{-1}}$ is connected to an 
$H$-component of $\hat{p}_{x}$, for infinitely many 
$i$. As in the case for $E$-components, we conclude there exists $i\neq j$, $g_{1}\in G$ corresponding to a terminal segment of 
$\hat{p}_{gx^{-1}}$, and $g_{2}\in G$ corresponding to an initial segment of $\hat{p}_{x}$ such that 
\[
g_{1} (b'a_{2}c^{k_{i}}b'') g_{2}\in E,\;\;\; g_{1} (b' a_{2}c^{k_{j}}b'') g_{2}\in E.
\]
It follows that $(c^{k_{j}-k_{i}})^{b'' g_{2}}\in H$, hence $\langle c^{k_{j}-k_{i}}\rangle^{b''g_{2}}$ is an infinite subgroup 
of $E^{b'' g_{2}}\cap H$.  But $H$ and $E$ are both in $\mathcal{H}'$, so
the intersection $E^{b'' g_{2}}\cap H$ must be finite (see Prop. 2.36 of \cite{Osi06Relatively}), which 
is a contradiction.
\end{proof}

We return to the proof of Lemma~\ref{Lem:DoubleCoset}. The length of $p_{i}$ 
is bounded by 
\[
\gamma = 2 + 2|x| + |g|,
\]
so $p_{i}$ is, trivially, a $(1,\gamma)$-quasi-geodesic.  Let $q_{i}$ be a geodesic in $\hat{\mathcal{X}}$ 
from 1 to $a_{1}c^{n_{i}} gx^{-1} a_{2} c^{k_{i}} x$. Since 
$p_{i}$ and $q_{i}$ are both quasi-geodesic paths with the same endpoints and $p_{i}$ is a path without backtracking, 
there exists a constant $\sigma=\sigma(\gamma)$ such that every (phase) vertex of $p_{i}$ is within $d_{X}$-distance $\sigma$ of a vertex of $q_{i}$ 
(\cite{Osi06Relatively} Prop. 3.15). In particular, for the endpoint $u_{i}$ of 
$e^{(1)}_{i}$, there exists a vertex $v_{i}$ in $q_{i}$ at $d_{X}$-distance at most $\sigma$ from $u_{i}$. 
Let $\epsilon$ be a quasi-convexity constant for $K$.  Since $q_{i}$ is a geodesic in $\hat{\mathcal{X}}$ between elements of $K$, 
$v_{i}$ is at $d_{X}$-distance at most $\epsilon$ 
from a vertex belonging to $K$.  Hence for infinitely many $i$, $u_{i}$ is connected in $\mathcal{X}$ to a vertex of 
$K$ by a path $r_{i}$ of $d_{X}$-length at most $\sigma+\epsilon$, as shown in Figure \ref{Fig:RelativelyQC}.

\begin{figure}
\begin{center}
\includegraphics[scale=.8]{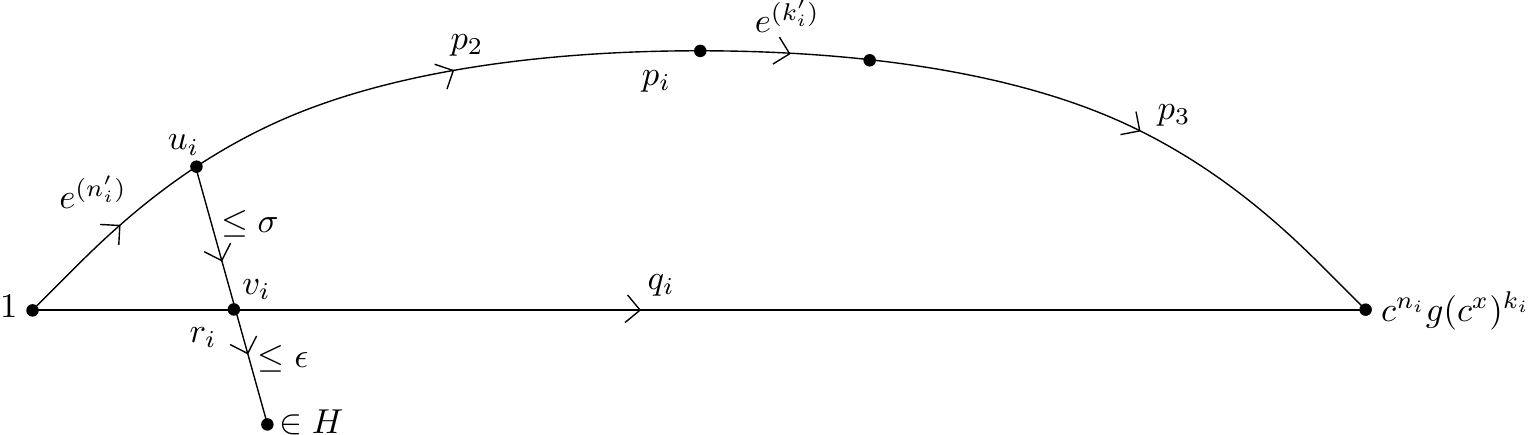}
\caption{a path $r_{i}$ of length at most $\sigma+\epsilon$ connects $u_{i}$ to $H$}
\label{Fig:RelativelyQC}
\end{center}
\end{figure}

Since the ball of radius $\sigma+\epsilon$ is finite in $\mathcal{X}$, there exist $i\neq j$ such that $r_{i}$ and $r_{j}$ are labelled 
by the same element $g'\in G$.  We conclude that 
\[
a_{1} c^{n_{i}}b g'\in K,\;\;\; a_{1} c^{n_{j}}b g'\in K
\]
hence $(c^{n_{i}-n_{j}})^{a_{1}^{-1}}\in K$, so $\langle (c^{n_{i}-n_{j}})^{a_{1}^{-1}}\rangle$ is an infinite subgroup of $K\cap E$.
This is a contradiction, so $I$ must be finite as required.
\end{proof}

\subsection{Groups discriminated by $\Gamma$}
Let $\Gamma$ be a torsion-free locally quasi-convex hyperbolic group and $G$ a finitely presented 
group discriminated by $\Gamma$.  Since $G$ embeds into an iterated centralizer extension of $\Gamma$, we will 
prove effective coherence of $G$ by first proving effective coherence in centralizer extensions of $\Gamma$. This will also 
allow us to compute the embedding.

An \emph{extension of a centralizer of $\Gamma$} is a group $G_{1}$ presented by 
\[
G_{1} = \langle \Gamma, t_{1},\ldots,t_{r} \; | \; [C(g), t_{i}]=[t_{i},t_{j}]=1, \; 1\leq i,j \leq r\rangle
\]
where $g\in \Gamma$ and $C(g)$ is the centralizer of $g$ in $\Gamma$.  Note that $G_{1}$ is isomorphic to the 
amalgamated product $\Gamma\ast_{C(g)} (C(g)\times\integers^{r})$, which we regard as the fundamental group of a graph of 
groups with two vertices.

The operation of forming an extension of a centralizer may be iterated to form a chain
\begin{equation}\label{Eqn:Chain}
\Gamma = G_{0} < G_{1} < \ldots  < G_{n}
\end{equation}
where $G_{i+1}$ is an extension of a centralizer of $G_{i}$.  We say that $G_{n}$ is obtained from $\Gamma$ by iterated extensions of centralizers.

For every non-trivial $g\in \Gamma$, its centralizer $C(g)$ is cyclic and malnormal, and $g$ may be chosen so that 
$C(g)=\langle g\rangle$. It follows (see \cite{MR96}) that all centralizers 
in $G_{n}$ are torsion-free abelian groups of finite rank and that the chain (\ref{Eqn:Chain}) may be arranged so that each 
$G_{i+1}$ is obtained from $G_{i}$ by extension of a cyclic centralizer, i.e.
\begin{equation}\label{Eqn:GiTF}
G_{i+1} = G_{i}\ast_{\langle g_{i}\rangle}\integers^{r_{i}+1}.
\end{equation} 
We will apply combination theorems to obtain relative hyperbolicity and local relative quasi-convexity of each $G_{i+1}$, then apply 
Theorem \ref{Thm:PowerCosetCombination} inductively to obtain effective coherence of $G_{n}$.
This is the central theorem of the paper.

\begin{theorem}\label{Thm:EffectiveCoherenceExtensions}
There is an algorithm that, given
\begin{itemize}
\item a finitely presented group $\Gamma$ that is hyperbolic, torsion-free, and locally quasi-convex,
\item a chain of centralizer extensions $\Gamma=G_{0} < G_{1} < \ldots < G_{n}$
and,
\item a finite subset $X\subset G_{n}$,
\end{itemize}
produces a presentation for the subgroup $\langle X\rangle$ generated by $X$.
\end{theorem}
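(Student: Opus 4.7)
The plan is to prove the theorem by induction on $n$, the length of the centralizer extension chain, and at each step view $G_{i+1}$ as the fundamental group of a two-vertex graph of groups
\[
\A_i: \quad G_i \;\longleftrightarrow\; \integers^{r_i+1}
\]
with single edge group $\langle g_i\rangle \cong \integers$, as in (\ref{Eqn:GiTF}). The idea is to apply Theorem~\ref{Thm:Folding}(ii) at each level, which converts effective coherence of the vertex groups into effective coherence of $\piA[\A_i] = G_{i+1}$, provided we can verify that $\A_i$ is benign.

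I would carry as an inductive invariant the following package of properties for $G_i$: (a) $G_i$ is finitely presented and effectively coherent, (b) $G_i$ is relatively hyperbolic with respect to the collection $\mathcal{P}_i$ of conjugates of the free abelian groups $\integers^{r_j+1}$, $j<i$, introduced by previous extensions, (c) $G_i$ is locally relatively quasi-convex, and (d) $G_i$ has decidable power coset membership problem. The base case $i=0$ is immediate: $\Gamma$ is hyperbolic (hence relatively hyperbolic over the empty collection), locally quasi-convex by hypothesis, decidable PCM by Lemma~\ref{Lem:GammaPowerCoset}, and effectively coherent by Proposition~6.1 of \cite{Gru99Angle}. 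Separately, $\integers^{r_i+1}$ has decidable PCM and is trivially effectively coherent.

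For the inductive step, first observe that $g_i$ is a hyperbolic element of $G_i$: it has cyclic centralizer in $G_i$, whereas every parabolic subgroup in $\mathcal{P}_i$ is free abelian of rank at least two, so any parabolic element would have non-cyclic centralizer. Combining (c), (d) and hyperbolicity of $g_i$, Lemma~\ref{Lem:DoubleCoset} gives Property~\ref{Property:DoubleCoset} for $G_i$ with respect to the maximal elementary subgroup of $g_i$, which equals $\langle g_i\rangle$ itself by malnormality and the torsion-free condition; so Property~\ref{Property:DoubleCoset} holds at the $G_i$-vertex of $\A_i$ with respect to its edge group. Since the edge group is cyclic (hence virtually cyclic), and both vertex groups have decidable PCM, Theorem~\ref{Thm:PowerCosetCombination} applies to show that $\A_i$ is benign and that $G_{i+1}$ has decidable PCM. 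Theorem~\ref{Thm:Folding}(ii) then yields effective coherence of $G_{i+1}$. To close the induction, I would invoke the standard amalgamation combination theorem of Dahmani \cite{Dah03} (the edge group is slender and parabolic-free) to conclude that $G_{i+1}$ is relatively hyperbolic with peripheral structure $\mathcal{P}_{i+1} = \mathcal{P}_i \cup \{\integers^{r_i+1}\}$, and the Bigdely--Wise combination theorem \cite{BW13} to conclude that $G_{i+1}$ is locally relatively quasi-convex, using that $\langle g_i\rangle$ is relatively quasi-convex in $G_i$ and that $\integers^{r_i+1}$ is locally (relatively) quasi-convex trivially.

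Applying this induction through $i=n-1$, I obtain effective coherence of $G_n$, and running the algorithm of Theorem~\ref{Thm:Folding}(ii) on the input set $X\subset G_n$ produces the desired presentation of $\langle X\rangle$. The main obstacle is verifying the persistence of the relative-hyperbolicity/local-relative-quasi-convexity package along the chain, which requires careful bookkeeping in applying the combination theorems and checking their hypotheses for the amalgamation over $\langle g_i\rangle$; in particular, one must confirm that $\langle g_i\rangle$ is relatively quasi-convex (even almost malnormal) in $G_i$, which follows from the malnormality of centralizers of infinite-order elements in torsion-free hyperbolic groups and propagates through centralizer extensions. A secondary subtlety is the identification of the maximal elementary subgroup of $g_i$ in $G_i$ with $\langle g_i\rangle$, so that Property~\ref{Property:DoubleCoset} aligns precisely with the edge group of $\A_i$; this again uses torsion-freeness and the cyclic centralizer condition.
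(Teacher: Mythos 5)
Your proposal is correct and follows essentially the same route as the paper: induction along the chain, viewing each $G_{i+1}$ as a two-vertex one-edge graph of groups, verifying benignness and decidable power coset membership via Theorem~\ref{Thm:PowerCosetCombination} with Lemma~\ref{Lem:DoubleCoset} supplying the double coset property, and applying Theorem~\ref{Thm:Folding}(ii) for effective coherence, with the combination theorems of Dahmani and Bigdely--Wise propagating relative hyperbolicity and local relative quasi-convexity. The only organizational difference is that the paper first proves the more general Theorem~\ref{Thm:EffectiveCoherenceElementary} (allowing torsion in $\Gamma$, replacing $\langle g_i\rangle$ by the maximal elementary subgroup $E(g_i)$ and $\integers^{r_i+1}$ by a virtually abelian $V_i$) and then specializes, whereas you argue directly in the torsion-free setting; the substance of the inductive step is identical.
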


The absence of torsion in $\Gamma$ does not play an essential role in the proof of Theorem \ref{Thm:EffectiveCoherenceExtensions}, so 
we will prove it in a more general case, starting from a base group $\Gamma$ that is hyperbolic and locally quasi-convex, but possibly 
with torsion.  
The iterated centralizer extension construction must be modified. As before, we will construct a chain of groups
\begin{equation}\label{Eqn:ChainElementary}
\Gamma = G_{0} < G_{1} < G_{2} < \ldots < G_{n}.
\end{equation}
To construct $G_{1}$, choose $g_{0}\in\Gamma$ to be 
an element of infinite order and 
form an amalgamated product 
\[
G_{1} = \Gamma \ast_{E(g_{0})=E_{0}} V_{0}
\]
where $V_{0}$ is a finitely presented virtually abelian group and $E(g_{0})$ is the maximal elementary
subgroup containing $g_{0}$.  The group $E(g_{0})$ is 
hyperbolically embedded in $\Gamma$ (see \cite{Osi06Elementary} Cor. 1.7), 
meaning $\Gamma$ is hyperbolic relative to $E(g_{0})$.  Then it follows that 
$G_{1}$ is hyperbolic relative to 
$V_{0}$ (see for example \cite{Dah03}) and locally relatively quasi-convex (see \cite{BW13} Thm. 3.1).

In general, $G_{i+1}$ is obtained from $G_{i}$ by choosing a hyperbolic element $g_{i}\in G_{i}$ of infinite order, 
a finitely presented virtually 
abelian group $V_{i}$, a subgroup $E_{i}\leq V_{i}$ isomorphic to $E(g_{i})$, and forming the amalgamated product
\begin{equation}\label{Eqn:Gi}
G_{i+1} = G_{i}\ast_{E(g_{i})=E_{i}} V_{i}.
\end{equation}
The same argument as above shows that $G_{i+1}$ is relatively hyperbolic and locally relatively 
quasi-convex.  

In the case 
when $\Gamma$ is torsion-free, each group $G_{i}$ is toral relatively hyperbolic.  Since each $g_{i}$ may be chosen in (\ref{Eqn:GiTF}) so that 
$C(g_{i})=\langle g_{i}\rangle$, one may assume that $g_{i}$ is a hyperbolic element.  Then the elementary subgroup $E(g_{i})$ 
coincides with the centralizer $C(g_{i})$.  
Thus Theorem \ref{Thm:EffectiveCoherenceExtensions} follows from 
Theorem \ref{Thm:EffectiveCoherenceElementary} below.

\begin{theorem}\label{Thm:EffectiveCoherenceElementary}
There is an algorithm that, given 
\begin{itemize}
\item a finitely presented group $\Gamma$ that is hyperbolic and locally quasi-convex,
\item a chain of amalgamated products 
\[
\Gamma=G_{0} < G_{1} < \ldots < G_{n}
\]
where $G_{i+1}=G_{i}\ast_{E(g_{i})=E_{i}} V_{i}$ for $i=0,\ldots,n-1$ with $g_{i}\in G_{i}$ hyperbolic of infinite order and $V_{i}$ virtually abelian, and  
\item a finite subset $X\subset G_{n}$,
\end{itemize}
produces a presentation for the subgroup $\langle X\rangle$ generated by $X$.
\end{theorem}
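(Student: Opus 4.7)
I would argue by induction on $n$, with the strengthened inductive hypothesis that, for each $i \leq n$, the group $G_i$ is (a) effectively coherent, (b) has decidable power coset membership problem, and (c) is relatively hyperbolic and locally relatively quasi-convex. The base case $i=0$ is immediate: $\Gamma$ is effectively coherent by the result cited in the introduction, has decidable power coset membership by Lemma~\ref{Lem:GammaPowerCoset}, and is (after adjoining the hyperbolically embedded subgroup $E(g_0)$ to the peripheral structure) relatively hyperbolic and locally relatively quasi-convex. Condition (c) at each successive level is already supplied, independent of the algorithmic argument, by the combination theorems of Dahmani and Bigdely--Wise cited just above the statement of the theorem. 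So the real content of the induction is propagating (a) and (b).

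For the inductive step, I would view $G_{i+1} = G_i \ast_{E(g_i) = E_i} V_i$ as the fundamental group of a two-vertex graph of groups $\mathds{A}$ with a single edge, whose edge group is the virtually cyclic maximal elementary subgroup $E(g_i)$. Given the input $X \subset G_n$, I would apply Theorem~\ref{Thm:Folding}(ii) to the top-level decomposition $G_n = G_{n-1} \ast_{E_{n-1}} V_{n-1}$ to obtain a presentation for $\langle X \rangle$. This requires effective coherence of the two vertex groups, which holds for $G_{n-1}$ by the inductive hypothesis and for the finitely presented virtually abelian group $V_{n-1}$ by standard arguments, and it requires that $\mathds{A}$ be benign, which I would establish via Theorem~\ref{Thm:PowerCosetCombination}.

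To verify the hypotheses of Theorem~\ref{Thm:PowerCosetCombination}: the unique edge group $E_{n-1}$ is virtually cyclic; the power coset membership problem is decidable in $V_{n-1}$ (virtually abelian) and in $G_{n-1}$ (inductive hypothesis (b)); and Property~\ref{Property:DoubleCoset} with respect to edge groups holds at the vertex carrying $G_{n-1}$ by Lemma~\ref{Lem:DoubleCoset}, applied to the hyperbolic element $g_{n-1} \in G_{n-1}$ and its maximal elementary subgroup $E(g_{n-1}) = E_{n-1}$, using inductive hypothesis (c). Because $\mathds{A}$ has a single non-loop edge, the ``pair'' clause in the definition of ``holds at $u$ with respect to edge groups'' is vacuous, and only the single-subgroup version of Property~\ref{Property:DoubleCoset} supplied by Lemma~\ref{Lem:DoubleCoset} is needed. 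Theorem~\ref{Thm:PowerCosetCombination} therefore gives both the benignness of $\mathds{A}$ and the decidability of the power coset membership problem in $G_n$, closing the induction on (b); combining with Theorem~\ref{Thm:Folding}(ii) closes (a) and produces the desired presentation for $\langle X \rangle$.

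The main obstacle is bookkeeping rather than geometry: effective coherence by itself is not enough to feed back into Theorem~\ref{Thm:PowerCosetCombination} at the next level, so one must carry along both decidable power coset membership and the structural data (relative hyperbolicity and local relative quasi-convexity) that lets Lemma~\ref{Lem:DoubleCoset} be re-invoked at $G_i$ for each $i$. The structural data comes for free from the cited combination theorems, and the power coset membership is propagated by Theorem~\ref{Thm:PowerCosetCombination} itself, so once the inductive hypothesis is stated in the correct conjunctive form the argument becomes a clean bootstrapping of Theorem~\ref{Thm:Folding}, Theorem~\ref{Thm:PowerCosetCombination}, and Lemma~\ref{Lem:DoubleCoset} up the chain.
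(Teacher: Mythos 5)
Your proposal is correct and follows essentially the same route as the paper: induction up the chain, viewing each $G_{i+1}$ as the fundamental group of a two-vertex, one-edge graph of groups, invoking Theorem~\ref{Thm:PowerCosetCombination} (with Lemma~\ref{Lem:DoubleCoset} supplying Property~\ref{Property:DoubleCoset} and Lemma~\ref{Lem:GammaPowerCoset} the base case) to propagate benignness and decidable power coset membership, and then Theorem~\ref{Thm:Folding}(ii) for effective coherence. Your explicit remark that the pair clause in ``holds at $u$ with respect to edge groups'' is vacuous when $\mathds{A}$ has a single non-loop edge is a useful clarification that the paper leaves implicit; the one small gloss is that you should note the conjugates $\omega_e(A_e)^a$ are again maximal elementary subgroups of hyperbolic elements, so Lemma~\ref{Lem:DoubleCoset} applies to each of them.
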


\begin{proof}
In the input we assume that for each $i$, the following data is provided: generating sets for $E(g_{i})$ and $E_{i}$, 
a bijection between those generating sets inducing an isomorphism 
$E(g_{i})\simeq E_{i}$, a presentation for $V_{i}$, and a free basis for a free abelian subgroup $A_{i}\leq V_{i}$ of finite index.

We regard each $G_{i+1}$ as the fundamental group of a graph of groups with two vertex groups $G_{i}$, $V_{i}$ and one edge group $E(g_{i})$.
We prove, by induction on $n$, that each $G_{i+1}$ has decidable power coset membership problem and the corresponding graph of groups 
is benign by applying Theorem \ref{Thm:PowerCosetCombination} at each step.

In the base case we have $G_{1}=\Gamma\ast_{E(g_{0})=E_{0}}V_{0}$.  The edge group $E(g_{0})$ is virtually cyclic, and the vertex 
group $\Gamma$ satisfies  
Property \ref{Property:DoubleCoset} with respect to $E(g_{0})$ and all of its conjugates $E(g_{0}^{x})=E(g_{0})^{x}$ by 
Lemma \ref{Lem:DoubleCoset}, so conditions (\ref{VirtuallyCyclic}) and (\ref{WithRespectToEdgeGroups}) 
of Theorem \ref{Thm:PowerCosetCombination} hold. 
The vertex group $\Gamma$ has decidable power coset membership problem 
by Lemma \ref{Lem:GammaPowerCoset}, so it suffices to show that every finitely presented virtually abelian group $V$ has decidable power 
coset membership problem.  
	
The power coset membership problem is a rational subset intersection problem: a coset of a finitely generated subgroup is a rational 
subset, as is a cyclic group minus one element, and the problem asks to determine whether such an intersection is non-empty. 
But the rational subset intersection problem in any finitely-generated group is Turing-equivalent to the rational subset intersection problem in 
any finitely generated finite-index subgroup.  An explicit construction of the reduction is given in \cite{Gru99Thesis} Thm. 2.3.3.  So we reduce 
power coset membership in $V$ to an instance of rational subset intersection in a free abelian finite-index subgroup $A\leq V$, which 
is decidable (see \cite{Gru99Thesis} Prop. 2.2.12).  Note that the reduction requires a coset diagram of $A$ in $V$: such a diagram may be 
constructed since $V$ has decidable membership problem (virtually abelian groups are abelian-by-finite, and an algorithm for the membership problem 
in all polycyclic-by-finite groups is given in \cite{BCR91}).

We conclude from Theorem \ref{Thm:PowerCosetCombination} that $G_{1}$ has decidable power coset membership problem and the 
corresponding graph of groups is benign.  The inductive case is proved similarly, with decidable power coset membership in $G_{i}$ 
provided by induction and noting as above that $G_{i}$ is relatively hyperbolic and locally relatively quasi-convex. 

We prove effective coherence by induction. Since finitely generated virtually abelian groups 
are polycyclic-by-finite, they are effectively coherent by \cite{BCR91} Thm. 3.4. 
Locally quasi-convex groups are 
also effectively coherent (\cite{Gru99Angle} Prop.~6.1), so the folding algorithm of 
\cite{KWM05} Theorem~5.8 shows that $G_{1}$ is effectively coherent, since the graph of groups for $G_{1}$ is benign.  
By induction, $G_{n}$ is effectively coherent.
\end{proof}

Theorem \ref{Thm:EffectiveCoherenceExtensions} implies that every  
finitely generated group $G$ that is discriminated by $\Gamma$ (torsion-free) is effectively coherent, provided $G$ is 
specified as a subgroup of some $G_{n}$.  However, if $G$ is specified by an abstract presentation 
this does not describe an algorithm unless the embedding $G\hookrightarrow G_{n}$ is known.  
We are now in a position to compute this embedding.  If $\Gamma$ does have torsion, we are not 
aware of an embedding of $G$ into iterated centralizer extensions of $\Gamma$, nor into a chain of the form (\ref{Eqn:ChainElementary}).

In \cite{KM13}, it was 
shown that one may compute a finite collection of homomorphsims from $G$ to groups obtained from $\Gamma$ by centralizer extensions, at least one 
of which must be injective. This was proved for all torsion-free hyperbolic groups $\Gamma$, but 
a method to identify an injective homomorphism from this collection could not be given. With the additional 
assumption of local quasi-convexity, effective coherence of $G_{n}$ allows us to determine which homomorphism is injective.

\begin{theorem}\label{Thm:Embedding}
Let $\Gamma$ be a torsion-free locally quasi-convex hyperbolic group.  There is an algorithm that, given a finitely presented group $G$:
\begin{enumerate}[(i)]
\item if $G$ is discriminated by $\Gamma$, computes a sequence centralizer extensions
\[
\Gamma=G_{0} < G_{1} < \ldots < G_{n}
\]
and an embedding $G\hookrightarrow G_{n}$;
\item if $G$ is not discriminated by $\Gamma$, runs forever.
\end{enumerate}
If in addition a solution to the word problem in $G$ is given, the algorithm terminates when $G$ is not discriminated by $\Gamma$, reporting this fact.
\end{theorem}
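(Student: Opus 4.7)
My plan is to combine the partial result of \cite{KM13} with the effective coherence of iterated centralizer extensions established in Theorem \ref{Thm:EffectiveCoherenceExtensions}. The algorithm of \cite{KM13}, given $G$, produces a finite list of candidate homomorphisms $\phi_{1},\ldots,\phi_{k}$ with $\phi_{i}\colon G \to G_{n}^{(i)}$ mapping into an iterated centralizer extension of $\Gamma$, with the guarantee that if $G$ is discriminated by $\Gamma$ then at least one $\phi_{i}$ is injective. The task therefore reduces to detecting an injective candidate, and I will run in parallel over $i$ a semi-decision procedure for injectivity of $\phi_{i}$ and return the first success together with the ambient $G_{n}^{(i)}$.

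For the semi-decision procedure, fix $\phi = \phi_{i}$ and denote its image by $H = \phi(G) \leq G_{n}^{(i)}$. Using Theorem \ref{Thm:EffectiveCoherenceExtensions}, I compute a finite presentation $\langle Y \mid S \rangle$ of $H$, together with words $w_{x} \in F(Y)$ expressing each $\phi(x)$ in terms of $Y$ (these can be read off the folded $\A$-graph supplied by Theorem \ref{Thm:Folding}). Since each $y \in Y$ lies in $H = \langle \phi(X) \rangle$ and the word problem in $G_{n}^{(i)}$ is decidable (as a consequence of effective coherence), enumeration of words in $F(X)$ yields for each $y$ a preimage $v_{y} \in F(X)$ with $\phi(v_{y}) = y$. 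These choices determine a generator assignment $y \mapsto v_{y}$, and the key observation is that $\phi$ is injective if and only if this assignment extends to a homomorphism $\psi\colon H \to G$ with $\psi \circ \phi = \mathrm{id}_{G}$, which in turn is equivalent to the finite list of word-equalities $s(v_{Y}(X)) = 1$ in $G$ for every $s \in S$, and $w_{x}(v_{Y}(X)) = x$ in $G$ for every $x \in X$. Indeed, if $\phi$ is injective, applying $\phi$ to $s(v_{Y}(X))$ yields $s(Y) = 1$ in $H$ and forces the first batch; the second follows from $\phi(w_{x}(v_{Y}(X))) = w_{x}(Y) = \phi(x)$; the converse is immediate, since such a $\psi$ witnesses injectivity of $\phi$. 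I semi-decide each required equality by enumerating products of conjugates of the relators $R$; all these enumerations terminate precisely when $\phi$ is injective.

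If in addition a solution to the word problem in $G$ is provided, each equality above is outright decidable, so injectivity of every $\phi_{i}$ is decided in finite time. Since \cite{KM13} returns a finite list, we then either identify an injective $\phi_{i}$ or, having rejected all candidates, certify that $G$ is not discriminated by $\Gamma$ and report this. The hardest part of the plan is the semi-decidability step when the word problem in $G$ is not available: non-injectivity of a particular $\phi_{i}$ cannot be detected directly, so correctness hinges on parallel enumeration eventually catching an injective candidate. The idea that makes the scheme succeed is the reduction of injectivity of $\phi$ to the existence of an explicit inverse built from the preimage words $v_{y}$, which converts injectivity verification into certifying that finitely many words vanish in $G$, a task amenable to enumeration of consequences of $R$.
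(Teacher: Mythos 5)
Your proposal is correct, and it takes a genuinely different and more elementary route than the paper. Both proofs begin identically, invoking the result of \cite{KM13} to obtain a finite list of candidate homomorphisms $\phi_i\colon G\to H_i$, at least one of which is injective when $G$ is discriminated by $\Gamma$, and both use Theorem~\ref{Thm:EffectiveCoherenceExtensions} to compute a presentation of $\phi_i(G)$. The divergence is in how injectivity of $\phi_i$ is then certified. The paper observes that $\phi_i(G)$ is toral relatively hyperbolic, runs Dahmani's recognition algorithm \cite{Dah08} as a semi-decision that $G$ itself is toral relatively hyperbolic, then invokes the Dahmani--Groves solution to the isomorphism problem \cite{DG08} to test $G\cong\phi_i(G)$, and finally appeals to Sela's Hopficity theorem \cite{Sel09} to upgrade ``$\phi_i$ is a surjection between isomorphic groups'' to ``$\phi_i$ is injective.'' You instead reduce injectivity of $\phi_i$ to the existence of an explicit left inverse $\psi$ with $\psi\circ\phi_i=\mathrm{id}_G$, and observe that, once a specific assignment $y\mapsto v_y$ is fixed (computable via the decidable word problem in $H_i$ — which your argument tacitly relies on, and which does hold since power coset membership in $H_i$ is decidable by Theorem~\ref{Thm:EffectiveCoherenceExtensions}), injectivity of $\phi_i$ is \emph{equivalent} to a finite list of word-equalities in $G$. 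These are semi-decidable by enumeration of consequences of the relators, and decidable outright when a word-problem oracle for $G$ is given. Your equivalence is correct in both directions: the retraction built from the $v_y$'s is exactly $\phi_i^{-1}$ when $\phi_i$ is injective, so the relator and retraction equalities hold; conversely any such $\psi$ witnesses injectivity. The net effect is that you bypass the recognition algorithm for toral relative hyperbolicity, the isomorphism problem for such groups, and the Hopficity theorem, replacing them all with a single elementary observation about presentations and retractions — a cleaner and more self-contained argument, whereas the paper's version illustrates how the embedding problem fits into the wider algorithmic theory of relatively hyperbolic groups.
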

\begin{proof}
From Theorem~3.17 of \cite{KM13}, we may effectively construct finitely many groups $H_{1}, \ldots, H_{m}$, each given as sequence centralizer 
extensions of $\Gamma$, and homomorphisms $\phi_{i}: G\rightarrow H_{i}$ such that if $G$ is discriminated by $\Gamma$, at least one $\phi_{i}$ 
is injective.  As observed above, each $H_{i}$ is toral relatively hyperbolic and locally relatively quasi-convex. 

For each $i$, construct a presentation for $\phi_{i}(G)$ using Theorem~\ref{Thm:EffectiveCoherenceExtensions}. 
Since $\phi_{i}(G)\leq H_{i}$ is finitely generated, it is relatively quasi-convex and hence toral relatively hyperbolic 
(by \cite{Hru10} Thm. 9.1 or \cite{MW11Local} Thm. 1.8).
If $G$ is discriminated by $\Gamma$, 
then $G$ is also toral relatively hyperbolic since it is isomorphic to some $\phi_{i}(G)$. 
We check if $G$ is toral relatively hyperbolic using the algorithm given in Theorem~0.2 of \cite{Dah08}, 
which terminates if so and runs forever if not.

If this algorithm terminates, we check for each $i=1,\ldots, m$ whether or not $G$ and $\phi_{i}(G)$ are isomorphic, using the 
solution to the isomorphism problem for toral relatively hyperbolic groups given in \cite{DG08}.  If $G\not\simeq\phi_{i}(G)$, 
then $\phi_{i}$ is not injective.  If $G\simeq \phi_{i}(G)$, then $G$ is discriminated by $\Gamma$ (since it is isomorphic to a subgroup of $H_{i}$,  
which is discriminated by $\Gamma$), 
and hence is Hopfian by \cite{Sel09} Thm. 1.12.
Consequently, $\phi_{i}$ is injective.

If a solution to the word problem in $G$ is given, we may run in parallel the following algorithm.  Enumerate non-trivial elements $g\in G$ and check 
whether or not $\phi_{i}(g)$ is trivial for each $i$.  If $G$ is not discriminated by $\Gamma$, 
all of the $\phi_{i}$ must fail to be injective and the algorithm will eventually find for each $i$ a non-trivial $g_{i}\in G$ such that $\phi_{i}(g_{i})=1$.
\end{proof}

Note that this theorem provides a \emph{recognition algorithm} for groups $G$ discriminated by $\Gamma$, provided the word problem is decidable in 
$G$. It also gives effective coherence of $G$ when $G$ is given by a presentation rather than as a subgroup of $G_{n}$.

\begin{theorem}\label{Thm:EffectiveCoherence}
There is an algorithm that, given 
\begin{itemize}
\item a finitely presented group $\Gamma$ that is hyperbolic, torsion-free, and locally quasi-convex,
\item a finitely presented group $G$ that is discriminated by $\Gamma$, and 
\item a finite subset $X\subset G$, 
\end{itemize}
computes a presentation for the subgroup $\langle X\rangle$ generated by $X$.
\end{theorem}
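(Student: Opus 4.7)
The proof is essentially an assembly of Theorem~\ref{Thm:Embedding} and Theorem~\ref{Thm:EffectiveCoherenceExtensions}, both established above. The plan is as follows.

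First I would apply the algorithm of Theorem~\ref{Thm:Embedding} to the input group $G$. Since $G$ is finitely presented and discriminated by $\Gamma$, this algorithm terminates and produces a chain of centralizer extensions
\[
\Gamma = G_{0} < G_{1} < \ldots < G_{n}
\]
together with an explicit embedding $\phi: G \hookrightarrow G_{n}$. Concretely, $\phi$ is specified by expressing each generator of $G$ as a word in the generators of $G_{n}$, so $\phi$ is effectively computable on elements.

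Next, using $\phi$, I would compute the finite set $\phi(X) \subset G_{n}$. Then invoke Theorem~\ref{Thm:EffectiveCoherenceExtensions} with input $\Gamma$, the computed chain, and the subset $\phi(X)$, to produce a presentation $\langle Y \mid R \rangle$ for the subgroup $\langle \phi(X) \rangle \leq G_{n}$. Since $\phi$ is injective, it restricts to an isomorphism $\langle X \rangle \xrightarrow{\sim} \langle \phi(X) \rangle$, so $\langle Y \mid R \rangle$ is also a presentation for $\langle X \rangle$.

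There is no real obstacle here, since the heavy lifting has been done in the two theorems cited: Theorem~\ref{Thm:Embedding} supplies the effective embedding into an iterated centralizer extension of $\Gamma$ (this is the place where both local quasi-convexity of $\Gamma$ and the discrimination hypothesis are used, via effective coherence of the $H_{i}$ and the isomorphism problem for toral relatively hyperbolic groups), and Theorem~\ref{Thm:EffectiveCoherenceExtensions} provides effective coherence inside any such iterated extension (via the graph-of-groups folding machinery applied inductively). The present theorem is simply the resulting combined algorithm, which is why it is stated at the end of the section.
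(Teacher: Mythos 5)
Your proposal is correct and matches the paper's intended argument: the paper states Theorem~\ref{Thm:EffectiveCoherence} immediately after Theorem~\ref{Thm:Embedding} as a direct consequence, precisely by composing the computable embedding $G\hookrightarrow G_{n}$ from Theorem~\ref{Thm:Embedding} with the effective coherence of $G_{n}$ from Theorem~\ref{Thm:EffectiveCoherenceExtensions}. Nothing further is needed.
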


In the case when $\Gamma$ is a free group, the above result was obtained in \cite{KM06} and in \cite{GW09}.  
Our result provides an alternate proof.

\begin{corollary}
Limit groups are effectively coherent.
\end{corollary}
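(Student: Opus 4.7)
The plan is to obtain this theorem essentially as a direct corollary of Theorem \ref{Thm:Embedding} combined with Theorem \ref{Thm:EffectiveCoherenceExtensions}. Given the input $(\Gamma, G, X)$, I would first run the algorithm of Theorem \ref{Thm:Embedding} on $G$. Since $G$ is assumed to be discriminated by $\Gamma$, that algorithm terminates and produces a sequence of centralizer extensions
\[
\Gamma = G_0 < G_1 < \ldots < G_n
\]
together with an explicit embedding $\iota : G \hookrightarrow G_n$.

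Next, I would compute the finite set $\iota(X) \subset G_n$ by applying $\iota$ to each generator in $X$; this is a routine matter since $\iota$ is given explicitly as a homomorphism of finitely presented groups. Then I would feed the chain $\Gamma = G_0 < \ldots < G_n$ together with the finite subset $\iota(X) \subset G_n$ into the algorithm of Theorem \ref{Thm:EffectiveCoherenceExtensions}, obtaining a presentation $\langle Y \mid R \rangle$ for the subgroup $\langle \iota(X) \rangle \leq G_n$.

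Since $\iota$ is injective, it restricts to an isomorphism $\langle X \rangle \xrightarrow{\sim} \langle \iota(X) \rangle$, so $\langle Y \mid R \rangle$ is also a presentation of $\langle X \rangle$, as required. No further technical work is needed: all the real difficulty—computing the embedding into a centralizer extension tower, and solving the power coset membership problem along that tower to feed the $\A$-graph folding algorithm—has already been absorbed into Theorems \ref{Thm:Embedding} and \ref{Thm:EffectiveCoherenceExtensions}. The only conceptual point to check is the compatibility of the two algorithms, namely that the ``known to be discriminated by $\Gamma$'' hypothesis of Theorem \ref{Thm:Embedding} matches the hypothesis here, which it does verbatim. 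Thus there is no real obstacle; the statement is simply the combined effective version of the embedding theorem and effective coherence of centralizer extensions.
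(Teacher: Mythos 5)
Your proposal is correct and is essentially the paper's route: the corollary is obtained by specializing Theorem~\ref{Thm:EffectiveCoherence} to the case where $\Gamma$ is a free group, and Theorem~\ref{Thm:EffectiveCoherence} itself is precisely the combination of Theorem~\ref{Thm:Embedding} and Theorem~\ref{Thm:EffectiveCoherenceExtensions} that you describe. The only point you leave implicit is the (elementary) observation that a finitely generated free group is torsion-free, hyperbolic, and locally quasi-convex, so that it qualifies as an admissible $\Gamma$ in the hypotheses of those theorems; stating this explicitly would close the argument cleanly.
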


We may also give an algorithm that enumerates all finitely generated groups discriminated by $\Gamma$.  

\begin{theorem}\label{Thm:Enumeration}
There is an algorithm that, given a presentation of a group $\Gamma$ that is hyperbolic, torsion-free, and locally quasi-convex, 
enumerates by presentations all finitely generated groups discriminated by $\Gamma$, without repeating isomorphic groups.
\end{theorem}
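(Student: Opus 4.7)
The plan is to dovetail two ingredients we already have on hand: the embedding/recognition algorithm of Theorem~\ref{Thm:Embedding}, and the solution to the isomorphism problem for toral relatively hyperbolic groups due to Dahmani–Groves \cite{DG08}. First I would note that every finitely generated group $G$ discriminated by $\Gamma$ is finitely presented: by \cite{KM09} such a $G$ embeds into some iterated centralizer extension $G_{n}$ of $\Gamma$, and by Theorem~\ref{Thm:EffectiveCoherenceExtensions} the subgroup $G \leq G_{n}$ is (effectively) finitely presented. Hence it suffices to enumerate finite presentations and filter out the ones whose group is not discriminated by $\Gamma$, while avoiding repetitions up to isomorphism.

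Concretely, I would enumerate all finite presentations $P_{1}, P_{2}, \ldots$ of groups on the generators of $\Gamma$-type, and for each $P_{i}$ run, in a dovetailed fashion, the algorithm of Theorem~\ref{Thm:Embedding}(i): given the finite presentation as input it will, \emph{if} the corresponding group $G_{i}$ is discriminated by $\Gamma$, eventually output a chain of centralizer extensions $\Gamma = H_{0} < H_{1} < \ldots < H_{k_{i}}$ and an embedding $\phi_{i}: G_{i} \hookrightarrow H_{k_{i}}$. Each time the algorithm succeeds on some $P_{i}$, we pause the dovetailing, record the successful pair $(P_{i}, \phi_{i})$, and proceed to the de-duplication step below before resuming. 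Presentations whose underlying group is not discriminated by $\Gamma$ simply never cause this embedding algorithm to halt, which is harmless for enumeration.

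For de-duplication, observe that the image $\phi_{i}(G_{i}) \leq H_{k_{i}}$ is a finitely generated subgroup of a toral relatively hyperbolic group, hence is itself toral relatively hyperbolic (by \cite{Hru10} or \cite{MW11Local}); so $G_{i}$, being isomorphic to $\phi_{i}(G_{i})$, is toral relatively hyperbolic. Whenever a new successful presentation $P_{i}$ appears, I would apply the isomorphism algorithm of \cite{DG08} to compare $P_{i}$ with every presentation previously output; if $P_{i}$ is isomorphic to none of them, emit it to the output list, otherwise discard it. Because every finitely generated group discriminated by $\Gamma$ admits infinitely many finite presentations, each such group will eventually appear in the enumeration and be output exactly once; conversely every presentation emitted does describe a group discriminated by $\Gamma$ because we obtained an embedding into some $H_{k_{i}}$.

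The main obstacle is really just the interaction between the two semi-decidable subroutines, and is handled by the dovetailing: the embedding algorithm of Theorem~\ref{Thm:Embedding} is only guaranteed to halt on discriminated inputs, so one cannot process the presentations one at a time. Using a standard Cantor-style schedule over $(i, \text{step})$ pairs, every presentation of a discriminated group will be processed to completion in finite time, and every output is vetted against the finite, growing list of previously emitted presentations using a genuinely \emph{terminating} isomorphism procedure, so the whole enumeration is well-defined and free of repetitions.
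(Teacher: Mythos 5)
Your proof is correct, but it takes a genuinely different route from the paper's. You enumerate \emph{all} finite presentations and filter them using the semi-decision procedure of Theorem~\ref{Thm:Embedding} (dovetailed across presentations), keeping exactly those on which the embedding algorithm halts. The paper instead works ``from above'': it directly enumerates all iterated centralizer extensions $G_{n}$ of $\Gamma$ together with all finite subsets $X_{i}\subset G_{n}$, and computes a presentation for each $\langle X_{i}\rangle$ via effective coherence (Theorem~\ref{Thm:EffectiveCoherenceExtensions}). Since every finitely generated group discriminated by $\Gamma$ is isomorphic to some such $\langle X_{i}\rangle$, and every $\langle X_{i}\rangle$ is discriminated by $\Gamma$ (being a subgroup of the discriminated group $G_{n}$), this enumeration is already exactly the right class of groups, with no filtering step and no dovetailing required. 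Both approaches then use Dahmani--Groves \cite{DG08} identically for de-duplication. The paper's route is more economical: it avoids Theorem~\ref{Thm:Embedding} entirely (whose proof is itself built on top of Theorem~\ref{Thm:EffectiveCoherenceExtensions} plus further ingredients such as \cite{KM13}, \cite{Dah08}, and Hopficity from \cite{Sel09}), and replaces the semi-decidable recognition subroutine with a generation scheme that is total by construction. Your argument buys nothing extra here, though it does demonstrate that the enumeration can be organized around recognition rather than generation, which could be useful if the centralizer-extension characterization were unavailable. One small stylistic point: you only need each group discriminated by $\Gamma$ to admit at least one finite presentation in your enumeration, not infinitely many; and the phrase ``on the generators of $\Gamma$-type'' is unclear and should simply read ``all finite presentations.''
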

\begin{proof}
Every finitely generated group discriminated by $\Gamma$ is isomorphic to a subgroup of a group obtained 
from $\Gamma$ by a finite chain of centralizer extensions. 
Enumerate all groups $G_{n}$ obtained from $\Gamma$ by a finite chain of centralizer extensions and all finite subsets $X_{i}\subset G_{n}$.  
Since each 
$G_{n}$ is effectively coherent, we can compute a presentation for $\langle X_{i}\rangle$. Since every $\langle X_{i}\rangle$ is toral relatively 
hyperbolic, we may use the solution to the isomorphism problem from \cite{DG08} to eliminate isomorphic groups.
\end{proof}

\bibliography{quasi-convex}

\bibliographystyle{plain}

\end{document}